\newtheorem{lem}{Lemma}[section]
\newtheorem{tw}[lem]{Theorem}
\newtheorem{cor}[lem]{Corollary}
\newtheorem{prop}[lem]{Proposition}
\newtheorem{fact}[lem]{Fact}
\newcommand {\rem}{\noindent {\bf Remark. }}
\newcommand {\rems}{\noindent {\bf Remarks. }}
\newcommand {\mr}{\mathrm}
\newcommand {\lk}{\left\{}
\newcommand {\rk}{\right\}}
\newtheorem*{twA}{Theorem A}
\newtheorem*{twB}{Corollary B}
\newtheorem*{twC}{Theorem C}
\newtheorem*{twD}{Theorem D}
\newtheorem*{twE}{Theorem E}
\newtheorem*{twF}{Theorem F}
\newtheorem*{twG}{Theorem G}
\newtheorem*{twH}{Theorem H}
\theoremstyle{definition}
\newtheorem{exs}[lem]{Examples}
\newtheorem{de}[lem]{Definition}
\newcommand {\dam}{\marginpar{\tiny }}
\begin{document}

\title{On asymptotically hereditarily aspherical groups}
\author{Damian Osajda}
\address{Universit\"at Wien, Fakult\"at f\"ur Mathematik\\
Oskar-Morgenstern-Platz 1, 1090 Wien, Austria}
\address{Instytut Matematyczny, Polska Akademia Nauk\\
\'Sniadeckich 8, 00-656 War\-sza\-wa, Poland}
\email{dosaj@math.uni.wroc.pl}
\author{Jacek \'Swi\c atkowski}
\address{Instytut Matematyczny,
Uniwersytet Wroc\l awski\\
pl.\ Grunwaldzki 2/4,
50--384 Wroc\-{\l}aw, Poland}
\email{swiatkow@math.uni.wroc.pl}

\thanks{Both authors were partially supported by MNiSW
grant N201 541738, and by Narodowe Centrum Nauki, decision no DEC-2012/06/A/ST1/00259.}
\date{\today}

\begin{abstract}
We undertake a systematic study of asymptotically hereditarily aspherical (AHA) groups -- the class of groups introduced by Tadeusz Januszkiewicz and the second author as a tool for exhibiting exotic properties of systolic groups. We provide many new examples of AHA groups, also in high dimensions. We relate the AHA property with the topology at infinity of a group, and deduce in this way some new properties of (weakly) systolic groups. We also exhibit an interesting property of boundaries at infinity for a few classes of AHA groups.
\end{abstract}
\subjclass[2010]{20F69 (Primary), 20F65 (Secondary)}
\keywords{asymptotic hereditary asphericity, (weakly) systolic group, coarse invariant}
\maketitle

%=======================================================================

\section{Introduction}
\label{intro}

{\it Asymptotic hereditary asphericity} (shortly AHA) is a coarse
property of metric spaces which reflects, at the asymptotic level, the fact
that every subspace of the space is aspherical (see \dam Definition \ref{daha} for
a precise statement). This property emerged in the work
of Tadeusz Januszkiewicz and the second author \cite{JS1}, where it was used
to show that certain high dimensional word hyperbolic groups
(namely systolic groups) do not contain
high dimensional arithmetic subgroups. Systolic groups were defined
in \cite{JS1} (independently in \cite{Hag}) and are related to the notion of simplicial nonpositive curvature. In \cite{JS2} it is proved that systolic groups are AHA.

In the present paper we provide some evidence
that the property AHA deserves further attention in the geometric study of groups.
First of all, we notice that there are examples of AHA groups other than
systolic ones. For example,
as it is shown in \cite{Z}, all groups
with asymptotic dimension $1$ are AHA. This includes e.g.\ the lamplighter groups
which are not finitely presented and hence not systolic.
In this paper, in Section \ref{twocpl},
we show that many $2$--dimensional groups are AHA,
revealing an interesting relationship between AHA and the celebrated
Whitehead's asphericity conjecture.
In particular, the Baumslag-Solitar groups (which are not
systolic, since they are not automatic) are AHA.
(As a side remark we pose the following
problem, which may be viewed as a
coarse variant of Whitehead's conjecture for groups: \emph{is every $2$--dimensional
asymptotically aspherical group AHA?} See the end of Section~\ref{twocpl} for a more
precise statement of this question, and for some comments.)

Perhaps the most interesting aspect of the AHA property is that it
is satisfied by certain groups of dimension above $2$.
In fact, it was already shown in \cite{JS2} that there are the AHA groups,
namely systolic ones, which have arbitrary asymptotic or cohomological
dimension. In this paper we show that groups related to spaces satisfying a
certain more general condition than systolicity
are also AHA (see Theorem \ref{systaha}), and we exhibit examples
of arbitrary dimension among them.
More precisely, in \cite{O-sdn} and \cite{O-chcg} the first author
has isolated a condition that unifies simplicial nonpositive
and cubical negative curvature, under the name {\it weak systolicity}.
We show that groups from some general subclass of weakly systolic groups
are all AHA, and we construct examples of such groups, in arbitrary
cohomological dimension (see Subsection~\ref{chcg}).

Asymptotic hereditary asphericity is obviously a rather
unexpected phenomenon among groups of high (cohomological) dimension.
Nevertheless, it might be true that ``generic'' groups of dimension above 2
are AHA, despite our common impression that typical high dimensional
phenomena are similar to that of high dimensional manifolds.
As we show in this paper (see Lemma~\ref{prescom} and Example~\ref{4.4}(4)),
generic (random) groups resulting from Gromov's density model are AHA.
However, these groups are known to have cohomological dimension $2$.
Procedures producing generic high dimensional
groups seem to be not known.

Another motivation for our interest in the AHA property comes from
a (partially verified) expectation
that various exotic phenomena among high dimensional systolic groups,
as discovered e.g.\ in \cites{JS2,O-ciscg,O-ib7scg,Sw-propi},
get proper perspective in
a more general setting of AHA groups.  In this direction,
we show in particular that all finitely
presented AHA groups are aspherical at infinity (Corollary~\ref{FpiAHA}), and
that Gromov boundaries of word hyperbolic AHA groups contain no $2$--disk
(Theorem~\ref{d1}(1)).
The respective results for systolic and $7$--systolic groups were earlier
established in \cites{O-ciscg,Sw-propi}.
Questions concerning other such properties remain open. For example,
we do not know whether the boundary of any AHA group is hereditarily
aspherical (we know this only for Gromov boundaries of some hyperbolic
systolic groups, see \cites{O-ib7scg,O-sdn}).

By appealing to the AHA property and its general consequences,
we have established in this paper a few
new facts about systolic groups. For example,
we show that the fundamental group of a closed manifold covered by $\mathbb R^n$,
for $n\geqslant 3$, cannot be a subgroup of a systolic group (Corollary~\ref{nmans}).
We also show that
the systolic boundary (as defined in \cite{OP})
of any systolic group contains no $2$--disk (Theorem~\ref{d2}).

For a final motivating remark, note that existence of high dimensional AHA groups,
together with their properties established in this paper,
yield some interesting phenomena.
Let $G$ be an AHA group of high cohomological dimension.
We may additionally assume that $G$ is word hyperbolic.
Then one of the following two cases occurs:
\begin{enumerate}
\item no subgroup of $G$ is isomorphic to the fundamental group of a closed aspherical manifold
of dimension at least $3$;
\item there exists a subgroup $H<G$ which is isomorphic to the fundamental group of a closed
aspherical manifold $M$ of dimension $3$ or more.
\end{enumerate}
Case (1) would provide first known hyperbolic groups of high dimension and with this property.
In case (2) the subgroup $H=\pi_1(M)$ is itself AHA and, in particular, $\pi_i^{\infty}(H)=0$, for $i\geqslant 2$ (see Corollary~\ref{FpiAHA} below).
At the moment no manifold $M$ with such properties is known.
It seems more likely for us that case (1) is true.

\medskip
We now discuss the organization of the paper and give more precise statements of
our main results. After various preliminaries gathered in Section~\ref{prel}, we
prove in Section~\ref{scrit} the following useful criterion sufficient for a metric space or a group to be AHA.
For a subset $A$ in
a metric space, $N_D(A)$ denotes the set of all points at distance $\leqslant D$
from $A$ -- the $D$-{\it neighbourhood} of $A$. For a metric space $X$ and a real number $r>0$, the property 
{fill-rad$(r)<\infty$ means that there is $t>0$ such that each loop of length at most $r$ in $X$ can be
null-homotoped in its $t$-neighborhood in $X$ (see Section~\ref{scrit} for details).} An isometric action of a group $G$ is {\it proper}
if any metric ball intersects at most finitely many of its $G$-translates.

\begin{twA}[see Corollary \ref{ccrit} in the text]
\label{twa}
Let $G$ be a group acting {properly by isometries} on a simply connected geodesic
metric space $X$ {with fill-rad$(r)<\infty$, for each $r>0$}. Suppose that to each subset
$A\subseteq X$ there is assigned a subspace $Y_A$ of $X$ such that:
\begin{itemize}
\item {\it $A\subseteq Y_A$ and $Y_A\subseteq N_D(A)$ for some universal
$D$ independent of $A$;}
\item {\it $Y_A$ is aspherical;}
\item {\it if $A_1\subseteq A_2$ then $Y_{A_1}\subseteq Y_{A_2}$.}
\end{itemize}
\noindent
Then $X$, and hence also $G$, is AHA.
\end{twA}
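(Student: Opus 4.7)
The plan is to verify Definition~\ref{daha} for $X$ directly from the hypotheses, using the assignment $A \mapsto Y_A$ as a monotone aspherical thickening, and then to transfer AHA to $G$ by coarse invariance. The three bullets package the right data: the first controls each thickening at a uniform scale $D$, the second supplies asphericity, and the third ensures that the assignment respects inclusions.

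The heart of the argument is a sphere-filling observation. Fix a subset $A \subseteq X$ and a radius $r \geqslant 0$. Applying the hypotheses to $N_r(A)$ in place of $A$, one obtains
\[
N_r(A) \;\subseteq\; Y_{N_r(A)} \;\subseteq\; N_{r+D}(A),
\]
with $Y_{N_r(A)}$ aspherical. Hence any continuous map $f\colon S^k \to N_r(A)$ with $k \geqslant 2$, regarded as a map into $Y_{N_r(A)}$, extends to a disk $\overline f\colon B^{k+1} \to Y_{N_r(A)} \subseteq N_{r+D}(A)$. So every such spherical cycle is filled within a uniformly controlled thickening of $A$ governed by the function $r \mapsto r+D$, independently of $A$; this is precisely the content of AHA for $X$.

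The monotonicity hypothesis enters to secure the coherence required by Definition~\ref{daha} between thickenings of nested subsets: if $A_1 \subseteq A_2$, then fillings produced inside $Y_{N_r(A_1)}$ sit inside $Y_{N_r(A_2)}$, so the construction is functorial with respect to inclusion. Once AHA is established for the complex $X$, the same property for $G$ follows from the Milnor--\v{S}varc lemma (the geometric action furnishes a quasi-isometry $G \to X$) together with coarse invariance of AHA.

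The step I expect to require most care is the precise translation between the hypotheses and the technical formulation of Definition~\ref{daha}: the definition may impose combinatorial demands on cellular structure, equivariance, or uniformity in parameters besides $r$ that the assignment $A \mapsto Y_A$ must be arranged to satisfy. Once that translation is in hand, the sphere-filling argument above carries the main substantive content of the proof.
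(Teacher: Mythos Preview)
Your proposal has a genuine gap: it rests on a misreading of Definition~\ref{daha}. AHA is \emph{not} the assertion that continuous spheres in metric neighborhoods $N_r(A)$ fill inside $N_{r+D}(A)$. It is a statement about simplicial maps into Rips complexes: for every $r$ there must exist $R$ such that, for every subset $A$, every simplicial map $f\colon S\to P_r(A)$ from a triangulated $k$--sphere (so $f$ sends vertices of $S$ to points of $A$, with adjacent vertices landing at distance $\leqslant r$) extends to a simplicial map $F\colon B\to P_R(A)$ with $\partial B=S$ and vertices of $B$ still mapping into $A$ itself. Your observation that $S^k\to N_r(A)$ fills in $Y_{N_r(A)}$ is true but is not the required conclusion, and your claim that this is ``precisely the content of AHA'' is incorrect.

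The translation you flag at the end as a technicality is in fact the entire proof. Starting from a simplicial $f\colon S\to P_r(A)$, one must first produce a continuous map $\bar f\colon S\to X$: on the $1$--skeleton by geodesics, and on the $2$--skeleton using the bound on $\hbox{fill-rad}(3r)$ (this is where simple connectedness and the cocompact action enter, and your argument never invokes them). Only from the $3$--skeleton onward does asphericity of the sets $Y_{\bar f(\rho^{(2)})}$ take over, and monotonicity is what guarantees that the already-constructed images of the faces of $\rho$ lie inside $Y_{\bar f(\rho^{(2)})}$ so that the extension can proceed---this is quite different from the ``nested subsets $A_1\subseteq A_2$'' role you assign it. After obtaining $\bar F\colon D^{k+1}\to N_C(A)$ continuously, one must re-triangulate finely and push vertices back into $A$ to recover a simplicial map into $P_R(A)$. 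None of these steps appears in your proposal.
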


Using this criterion, we obtain in Section~\ref{twocpl} the following.

\begin{twB}[see Lemma~\ref{asf2} and Examples~\ref{4.4} in the text]
Let $G$ be a group acting geometrically {(i.e.\ by isometries, properly and cocompactly)}
on a simply connected $2$--dimensional geodesic metric cell complex, whose every
subcomplex is aspherical. Then $G$ is AHA. In particular,
groups from the following classes are AHA:
\begin{enumerate}
\item finitely presented small cancellation groups,
\item groups acting {geometrically} on $2$--dimensional CAT(0) complexes,
\item random groups of Gromov,
\item one relator groups,
\item knot groups,
\item fundamental groups of compact $2$--complexes satisfying Whitehead's
conjecture.
\end{enumerate}
\end{twB}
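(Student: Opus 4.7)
The plan is to apply Theorem A with the natural choice $Y_A:=$ the smallest subcomplex of $X$ containing $A$, namely the union of all closed cells of $X$ that meet $A$. The three bullet conditions of Theorem A are then immediate: (i) $A\subseteq Y_A$ by construction; (ii) since $G$ acts cocompactly there are finitely many $G$--orbits of cells and hence a uniform diameter bound $D$ for cells of $X$, giving $Y_A\subseteq N_D(A)$; (iii) monotonicity $A_1\subseteq A_2\Rightarrow Y_{A_1}\subseteq Y_{A_2}$ is clear from the definition; and asphericity of $Y_A$ is precisely the standing hypothesis on $X$. Theorem A then yields that $G$ is AHA.

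For cases (1)--(6) it suffices, in each instance, to exhibit a simply connected $2$--dimensional cell complex carrying a geometric $G$--action in which every subcomplex is aspherical --- i.e.\ on which Whitehead's conjecture holds. Case (6) is the standing assumption. For one-relator groups (4), the presentation $2$--complex is aspherical by Lyndon's identity theorem, and Howie's theorem establishes Whitehead's conjecture in this setting. Knot groups (5) are locally indicable and their complements in $S^3$ deformation retract to $2$--dimensional aspherical classifying spaces (Papakyriakopoulos), to which Howie's theorem again applies. For small cancellation groups (1), Whitehead's conjecture is known in the $C(6)$ (hence also $C'(1/6)$) setting, and one takes $X$ to be the universal cover of the presentation $2$--complex. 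Case (3) reduces to (1), since random groups in Gromov's density model are $C'(1/6)$ almost surely at sufficiently small density. Finally, for groups acting geometrically on a $2$--dimensional CAT(0) complex $X$ (2), one verifies Whitehead's conjecture for $X$ by a diagrammatic reduction argument, exploiting the absence of non-trivial reduced spherical diagrams in the ambient CAT(0) space: any hypothetical such diagram in a subcomplex $Y\subseteq X$ would, being reduced, persist as a non-trivial spherical diagram in $X$, contradicting $\pi_2(X)=0$.

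The main obstacle is not the criterion itself, which becomes essentially automatic with the above choice of $Y_A$, but the verification that Whitehead's conjecture holds (or is known to hold) in each listed class. The deepest inputs are Howie's theorems on Whitehead's conjecture for one-relator and locally indicable presentations, and the combinatorial argument that handles the CAT(0) case; the remaining items either reduce to these or are direct applications of classical small cancellation theory.
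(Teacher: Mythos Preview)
Your main argument --- applying Theorem~A with $Y_A$ the smallest subcomplex containing $A$ --- is exactly the paper's approach (Lemma~\ref{asf2}), and it is correct.

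The case verifications, however, diverge from the paper in several places, and two of them have genuine gaps.

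\textbf{Case (3), random groups.} Your reduction to $C'(1/6)$ only covers densities $d<1/12$; Gromov's random groups are nontrivial and hyperbolic for all $d<1/2$. The paper's argument (Example~\ref{4.4}(4)) instead uses that in random groups the linear isoperimetric inequality holds for \emph{all} reduced van Kampen diagrams; this forbids reduced spherical diagrams outright, and since this property passes to sub-presentations, one gets hereditary asphericity over the full density range.

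\textbf{Case (2), CAT(0) $2$--complexes.} The step ``a reduced spherical diagram in $Y$ persists as a non-trivial spherical diagram in $X$, contradicting $\pi_2(X)=0$'' is a non-sequitur: $\pi_2(X)=0$ does not by itself rule out reduced spherical diagrams --- a reduced diagram may well be null-homotopic. What you would need is the stronger fact that CAT(0) $2$--complexes are diagrammatically reducible, and that requires a separate (Gauss--Bonnet type) argument, not merely $\pi_2(X)=0$. The paper (Corollary~\ref{cat02}) sidesteps this entirely: in a $2$--dimensional CAT(0) complex the link condition says vertex links have no closed geodesic of length $<2\pi$, and this is inherited by subgraphs of the links, so every subcomplex is again locally CAT(0), hence aspherical.

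\textbf{Cases (1), (4), (5).} Your arguments here are valid but heavier than the paper's. For one-relator groups the paper simply observes that any subcomplex of the presentation complex is either a graph or again a one-relator presentation complex (no Howie needed). For small cancellation, any sub-presentation is again small cancellation, hence aspherical. For knot groups the paper cites the CLA condition of Chiswell--Collins--Huebschmann \cite{CCH}, which passes to sub-presentations. Your route through Howie's locally-indicable theorem works too, but is a different and less elementary path.
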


Note that (1) and (4) above give answers to the questions posed by
A.\ Dranishnikov and D.\ Osin in \cite[Problems 3.15(b) and 3.16]{Dra}.
Moreover, (1) generalizes to 
graphical small cancellation groups (see Example~\ref{4.4}(3))
and to
infinitely presented
small cancellation groups (see Lemma~\ref{prescom'} and Remarks afterwards). For such groups
with infinite asymptotic dimension (for example, the monsters from \cite{O-sc}) the AHA property is a very useful tool
(see Remarks at the end of Section~\ref{FAHA}).

Using again the criterion from Theorem A,
we prove in Section~\ref{weak} that groups from a certain subclass of so called
\emph{weakly systolic} groups (introduced by the first author in \cite{O-sdn})
are AHA. This class contains in particular all systolic groups, so this
generalizes the earlier mentioned result from \cite{JS1}
that all systolic groups are AHA.
Based on a recent elementary construction of highly dimensional weakly systolic groups provided
in \cite{O-chcg}, we give a new (i.e.\ different than the systolic ones provided e.g.\ in \cites{JS0,JS1})
construction of AHA groups in arbitrary cohomological dimension -- see Subsection \ref{chcg}.

In Section \ref{FAHA} we show the following.

\begin{twC}[see Theorem~\ref{AHAF} in the text]
Any finitely presented AHA group has type $F_\infty$, i.e.\ it has
a classifying CW complex with finitely many cells in each dimension.
\end{twC}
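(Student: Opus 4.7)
The standard way to prove that a finitely generated group $G$ has type $F_\infty$ is to exhibit, for each integer $n \geq 1$, a free cellular cocompact $G$-action on an $(n-1)$-connected CW complex with finitely many orbits of cells. My plan is to use the Vietoris--Rips complexes $P_d(G)$ for this purpose and to drive the induction on connectivity by AHA.

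For each $d \geq 1$, let $P_d(G)$ be the Rips complex at scale $d$ of $G$ with respect to a fixed finite generating set: vertices are the elements of $G$, and a finite subset spans a simplex iff its pairwise word-distances are at most $d$. The action of $G$ by left translation has finitely many orbits of simplices in each dimension (after barycentric subdivision it becomes free). Thus the theorem reduces to showing: \emph{for every $n$ there is a scale $d = d(n)$ such that $P_{d(n)}(G)$ is $(n-1)$-connected.} The base cases $n = 1$ (immediate from finite generation) and $n = 2$ (equivalent to finite presentation, which is our hypothesis) are standard.

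For the inductive step, suppose $P_d(G)$ is $(n-1)$-connected with $n \geq 2$, and let $f \colon S^n \to P_d(G)$ be simplicial; its image is spanned by a finite subset $A \subseteq G$. The AHA property supplies a controlled filling: any $n$-sphere drawn on $A$ bounds an $(n+1)$-disk inside a bounded neighborhood $N_D(A)$ of $A$, where $D$ depends only on $d$, $n$, and universal AHA parameters of $G$ (concretely, in the favorable case covered by Theorem~A, the disk can be taken in an aspherical hull $Y_A \subseteq N_D(A)$). Translating this filling to the Rips combinatorics, $f$ extends to a simplicial map $D^{n+1} \to P_{d'}(G)$ for a uniformly chosen $d' = d'(d,n)$. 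Hence $\pi_n(P_{d'}(G)) = 0$, and inductively $P_{d'}(G)$ is $n$-connected. Iterating produces $d(n)$ for every $n$, and each $P_{d(n)}(G)$ witnesses $G$ being of type $F_n$; thus $G$ is of type $F_\infty$.

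The main obstacle is the inductive step: converting AHA's coarse-geometric filling data into a genuine simplicial disk in a Rips complex of controlled scale, \emph{uniformly} over all spheres $f$. This hinges on quantitative control of the AHA filling function---the relation between the metric size of the subset $A$ and the size of a neighborhood in which $n$-spheres drawn on $A$ can be capped---combined with a standard simplicial-approximation-at-scale argument. Once the correct $d'$ is identified, the remainder of the argument is formal; producing a well-defined and finite $d'$ for each $(d,n)$ is exactly the content of AHA and is the only place in the proof where the hypothesis is really used.
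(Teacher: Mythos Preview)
There is a genuine gap in your inductive step. From ``every simplicial $n$--sphere in $P_d(G)$ bounds a disk in $P_{d'}(G)$'' you conclude $\pi_n(P_{d'}(G))=0$; this does not follow. An $n$--sphere in $P_{d'}(G)$ need not lie in $P_d(G)$, and applying AHA to it only produces a filling in $P_{R(d')}(G)$, not in $P_{d'}(G)$. Iterating this you never stabilize at a single scale: what AHA actually delivers is that for each $d$ there is $d'$ with the inclusion-induced map $\pi_n(P_d(G))\to\pi_n(P_{d'}(G))$ trivial for all $n\ge2$. This is strictly weaker than any particular $P_{d'}(G)$ being $n$--connected, and in general there is no reason to expect the Rips complex of an AHA group to be highly connected at any finite scale.

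The paper's argument is designed exactly around this weaker conclusion. One takes the contractible complex $P_\infty(G)=\bigcup_d P_d(G)$ with its $G$--filtration by the $P_d(G)$ (each simply connected once $d$ exceeds the threshold coming from finite presentation). The AHA property says precisely that this filtration is \emph{essentially $(n-1)$--connected} for every $n$, and Brown's Finiteness Criterion then yields type $F_n$ for every $n$. So your instinct to use the Rips complexes and AHA is correct, but the right packaging is Brown's criterion rather than a claim that a single $P_d$ is highly connected. A minor side remark: the definition of AHA already gives fillings at the level of Rips complexes (a sphere in $P_r(A)$ bounds in $P_R(A)$, same $A$), so there is no need to pass through neighborhoods $N_D(A)$ or the hulls $Y_A$ of Theorem~A; that theorem is a criterion for establishing AHA, not a consequence of it.
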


In Sections \ref{CIAHA} and \ref{AsAHA} we deal with connectedness at infinity of AHA groups.
We start Section \ref{CIAHA} by clarifying the definition of the fact
that $\pi_i^\infty(G)=0$ for a group $G$ (see Theorem \ref{qipi} and Definitions \ref{nasf} and \ref{gasf}). Next, we prove the following two results,
and conclude the section with their few corollaries of independent interest.

\begin{twD}[see Theorem \ref{piAHA} and Corollary \ref{FpiAHA} in the text]
Let $G$ be an AHA group acting geometrically on an $n$--connected
metric space. Then $\pi^\infty_i(G)=0$ for every $2\leqslant i\leqslant n$.
In particular, a finitely presented AHA group $G$ satisfies
$\pi^\infty_i(G)=0$ for all $i\geqslant 2$.
\end{twD}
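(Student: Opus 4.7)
The plan is to apply AHA directly to the image of a spherical map in $X$, transferring the $n$-connectedness of $X$ into control of fillings near infinity.

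By Theorem~\ref{qipi}, $\pi_i^\infty(G)$ can be computed from $X$ itself, equipped with the exhaustion by metric balls $B_r := B(x_0,r)$ around a fixed basepoint $x_0 \in X$. It thus suffices to show: for every $r\geqslant 0$ there exists $r'\geqslant r$ such that every continuous map $\sigma: S^i \to X\setminus B_{r'}$ with $2\leqslant i\leqslant n$ extends to a continuous map $D^{i+1}\to X\setminus B_r$.

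Fix $r$, let such a $\sigma$ be given, and put $A:=\sigma(S^i)\subseteq X\setminus B_{r'}$. Since $X$ is $n$-connected and $i\leqslant n$, the map $\sigma$ is null-homotopic in $X$ (though \emph{a priori} only through a disk traversing $B_r$). Apply AHA to the subset $A$: by Definition~\ref{daha}, there is a universal constant $R$, depending only on $X$, such that any null-homotopic sphere with image in $A$ already bounds a disk inside the $R$-neighborhood $N_R(A)$. (In the situation of Theorem~A this is transparent, with $R=D$ and the null-homotopy realized inside the aspherical subcomplex $Y_A\subseteq N_D(A)$.) Setting $r':=r+R$ then forces $N_R(A)\subseteq X\setminus B_r$, so the resulting disk avoids $B_r$, proving the first assertion.

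For the ``in particular'' part, Theorem~C provides a $K(G,1)$ model $Y$ with finite skeleta. For each $k\geqslant 2$, the $(k+1)$-skeleton $\widetilde Y^{(k+1)}$ of the universal cover is a $k$-connected CW-complex (by cellular approximation inside the contractible $\widetilde Y$) on which $G$ acts geometrically (the quotient $Y^{(k+1)}$ is finite). Applying the first part with $X=\widetilde Y^{(k+1)}$ yields $\pi_i^\infty(G)=0$ for $2\leqslant i\leqslant k$; letting $k$ vary completes the proof.

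\textbf{Main obstacle.} The delicate point is the AHA step. Definition~\ref{daha} is combinatorial in spirit, so one must confirm that it delivers the continuous filling statement used above, with a constant $R$ \emph{independent} of $A$ and $\sigma$. I expect this to follow by a simplicial approximation of $\sigma$ at a fixed combinatorial scale, followed by AHA applied to the approximated sphere; the uniformity of $R$ over all subsets $A$ and all dimensions $i\leqslant n$ is the subtle requirement to verify.
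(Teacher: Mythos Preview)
Your overall strategy is the paper's, but the AHA step contains a genuine gap. Definition~\ref{daha} does \emph{not} say that a continuous sphere with image in $A$ bounds a continuous disk in $N_R(A)$. It says that a \emph{simplicial} map $S\to P_r(A)$ extends to a simplicial map $B\to P_R(A)$: the vertices of the filling still lie in $A$ itself, and $R$ --- which depends on the input scale $r$, not only on $X$ --- bounds edge lengths in the Rips complex, not a tubular neighbourhood radius. Your parenthetical appeal to Theorem~A conflates a sufficient criterion for AHA with the definition; a general AHA space need not carry aspherical subcomplexes $Y_A$.

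The missing ingredient is precisely the $n$--connectedness hypothesis, which your argument never uses. Under a geometric action, $n$--connectedness of $X$ upgrades to \emph{uniform} $n$--connectedness, and this is what the paper exploits via Lemma~\ref{ucfill}: given the simplicial filling $B\to P_R(A)$, one inductively fills each simplex of $B$ continuously in $X$ using controlled local contractions, and the resulting continuous disk lands in $N_{C(R)}(A)$ for a constant $C(R)$ coming from the uniform connectedness bounds --- so the correct choice is $r'=r+C(R)$, not $r'=r+R$. The paper packages this as Proposition~\ref{aspi=pi} (namely $\mbox{as--}\pi_i^{\infty}=0 \Rightarrow \pi_i^{\infty}=0$ for uniformly $n$--connected proper spaces), after first observing that AHA trivially gives $\mbox{as--}\pi_i^{\infty}=0$ (Proposition~\ref{AHAas0}). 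Your ``main obstacle'' paragraph misidentifies the difficulty: uniformity of $R$ over $A$ and over $i$ is already built into Definition~\ref{daha}; the real work is the Rips-to-continuous conversion, and that is exactly where $n$--connectedness enters.

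Your ``in particular'' argument via Theorem~C is correct and matches the paper (Corollary~\ref{FpiAHA}).
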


\begin{twE}[see Theorem \ref{npi1} in the text]
Let $G$ be a finitely presented one-ended AHA group with
$vcd(G)<\infty$. Then $G$ is not simply connected at infinity,
i.e.\ it is not true that $\pi_1^\infty(G)=0$.
\end{twE}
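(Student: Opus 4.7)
My plan is to assume $\pi_1^\infty(G)=0$ and derive a contradiction, combining the higher-connectivity consequences of Theorems~C and D with the finiteness of cohomological dimension.

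First I would pass to a torsion-free finite-index subgroup $G'\leqslant G$, which exists since $vcd(G)<\infty$. Finite presentation, one-endedness, and the AHA property are all quasi-isometry invariants, so they transfer to $G'$, and $cd(G')=vcd(G)=:n$. By Theorem~C, $G'$ is of type $F_\infty$, and combined with $cd(G')=n$ a standard argument (truncating an $F_\infty$ free resolution at dimension~$n$) shows $G'$ is of type $F$. Thus $G'$ acts freely and cocompactly on some contractible complex $X=EG'$ of dimension $n$. Note $n\geqslant 2$, since an infinite one-ended group has $vcd\geqslant 2$ (ruling out finite and virtually cyclic groups).

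Since $\pi_1^\infty$ is a quasi-isometry invariant (Theorem~\ref{qipi}), the assumption $\pi_1^\infty(G)=0$ gives $\pi_1^\infty(G')=0$. Theorem~D then yields $\pi_i^\infty(G')=0$ for all $i\geqslant 2$, while one-endedness takes care of $i=0$. Hence every pro-homotopy group of the end of $X$ is trivial. Applying pro-Hurewicz, the pro-reduced homology of the end of $X$ also vanishes in every degree: for every compact $K\subseteq X$ and every $i\geqslant 0$ there is a larger compact $K'\supseteq K$ with $H_i(X\setminus K')\to H_i(X\setminus K)$ the zero map.

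The final step is to compute $H^*(G',\mathbb{Z}G')=H^*_c(X)$ via the long exact sequence of the pair $(X,X\setminus K)$. Contractibility of $X$ identifies, for $i\geqslant 1$, the direct limit $H^i_c(X)=\varinjlim_K \widetilde H^{i-1}(X\setminus K)$. By the Universal Coefficient Theorem, each $\widetilde H^{i-1}(X\setminus K)$ is assembled from $\mathrm{Hom}(H_{i-1}(X\setminus K),\mathbb{Z})$ and $\mathrm{Ext}(H_{i-2}(X\setminus K),\mathbb{Z})$; pro-vanishing of these homology groups forces every class to die in some $\widetilde H^{i-1}(X\setminus K')$, so $H^i_c(X)=0$ for all $i$. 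In particular $H^n(G',\mathbb{Z}G')=0$, contradicting the standard fact that a group of type $F$ with $cd=n$ has $H^n(G',\mathbb{Z}G')\neq 0$. The main obstacle in the plan is precisely this cohomological step --- cleanly translating pro-trivial homotopy at infinity into vanishing of compactly supported cohomology via pro-Hurewicz and UCT; the remaining ingredients (quasi-isometry invariance of the relevant coarse notions, and the routine passage from $F_\infty+cd<\infty$ to $F$) are standard.
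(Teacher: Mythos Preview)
Your proposal is correct and follows essentially the same line as the paper's proof: both argue by contradiction, combining the assumption $\pi_1^\infty(G)=0$ with Theorem~D (Corollary~\ref{FpiAHA}) to obtain $\pi_i^\infty=0$ for all $i$, then passing via a Hurewicz argument to acyclicity at infinity and hence $H^*(G;\mathbb{Z}G)=0$, contradicting finite vcd. The only difference is that the paper packages the homological step as citations to the Proper Hurewicz Theorem \cite[Theorem 17.1.6]{Geo} and a result of Geoghegan--Mihalik \cite{GeoM1}, whereas you unpack the latter explicitly through compactly supported cohomology of a finite $EG'$ and the Universal Coefficient Theorem.
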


In Section \ref{AsAHA} we introduce a new notion of
{\it asymptotic connectedness at infinity},
$\hbox{as--}\pi_i^\infty(X)=0$, and show that it is invariant under coarse
equivalence of metric spaces. The advantage (over classical connectedness
at infinity, which is more topological in flavor) is that this notion can
be studied for groups which are not necessarily finitely presented.
Accordingly, we get the following.

\begin{twF}[see Proposition \ref{AHAaspi0} in the text]
If $G$ is an AHA group then $\hbox{as--}\pi_i^\infty(G)=0$
for all $i\geqslant2$.
\end{twF}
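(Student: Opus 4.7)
The plan is to exploit the AHA property through its characterisation in terms of aspherical subcomplexes, as in the criterion preceding Theorem A. Since $\hbox{as--}\pi_i^\infty$ is coarse invariant (Section \ref{AsAHA}), it suffices to prove the vanishing for any metric cell complex $X$ on which $G$ acts geometrically. Fix such an $X$ and a basepoint $x_0\in X$.

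First I would unravel the definition of $\hbox{as--}\pi_i^\infty(X)=0$ from Section \ref{AsAHA}: given data representing an $i$-sphere at infinity (a suitably discretised map $f\colon S^i\to X$, or more likely a system of such maps at growing scales in the associated Rips-like complexes) whose image lies far from $x_0$, one must produce a filling whose image is still far from $x_0$, with all distance controls uniform in the input. The point of introducing $\hbox{as--}\pi_i^\infty$, as stressed at the end of Section~\ref{intro}, is precisely that the input and output are purely coarse, so finite presentability of $G$ is not needed.

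The core step is to apply the AHA property to the set $A=f(S^i)$. This yields a subcomplex $Y_A$ containing $A$, lying inside a controlled neighbourhood $N_D(A)$, and aspherical in a quantitative sense; in particular $\pi_j(Y_A)=0$ for all $j\geqslant 2$. Using the asphericity of $Y_A$, I would then fill $f$ by a map $F\colon D^{i+1}\to Y_A$. Since $Y_A\subseteq N_D(A)$ and $A$ lies far from $x_0$, the image of $F$ remains far from $x_0$, with an additive loss of at most $D$. This is exactly what is required to conclude $\hbox{as--}\pi_i^\infty(X)=0$.

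The main obstacle will be matching the quantitative form of AHA (how the thickening constant $D$ and the asphericity scale of $Y_A$ depend on the size and scale parameters of $A$) with the quantitative form of $\hbox{as--}\pi_i^\infty=0$ (the control functions relating the distance of the sphere from $x_0$ to the distance of its filling from $x_0$). Because the filling lives inside $Y_A\subseteq N_D(A)$, this reduces to the book-keeping observation that the $D$-neighbourhood of $A$ still lies sufficiently far from $x_0$ once $A$ is chosen far enough out. If the definition of $\hbox{as--}\pi_i^\infty$ is formulated via discrete sphere data at growing scales, one applies the above argument scale-by-scale and concatenates the resulting control functions, which is essentially a translation between the two parallel coarse frameworks rather than a new geometric input.
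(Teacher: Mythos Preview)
Your proposal has a genuine gap: you treat Theorem~A (the criterion via aspherical thickenings $Y_A$) as a \emph{characterisation} of AHA, but it is only a \emph{sufficient} condition. The AHA property, as defined in Definition~\ref{daha}(2), does not hand you any subcomplex $Y_A$; it only tells you that every subset $A$ is $(i;r,R)$--aspherical in itself in the Rips-complex sense. There is no statement in the paper (and none is known) that every AHA group acts geometrically on a complex admitting a system of aspherical $Y_A$'s as in Corollary~\ref{ccrit}. So your ``core step'' --- ``apply the AHA property to the set $A=f(S^i)$; this yields a subcomplex $Y_A$\dots'' --- simply does not follow, and your argument would at best prove the proposition only for groups satisfying the criterion, not for all AHA groups.

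The paper's actual proof is a one-line observation, because the two definitions are phrased in the same language. Compare Definition~\ref{daha}(2) with Definition~\ref{aspix}: AHA says that for every $r$ there is $R$ such that \emph{every} subset $A\subseteq X$ is $(i;r,R)$--aspherical in itself; the condition $\hbox{as--}\pi_i^\infty(X)=0$ only asks that for each bounded $K$ there exist a bounded $L\supseteq K$ with $X\setminus L$ being $(i;r,R)$--aspherical in $X\setminus K$. Given $r$, take the $R$ supplied by AHA, and for any bounded $K$ set $L=K$: then $X\setminus L=X\setminus K$ is a subset of $X$, hence $(i;r,R)$--aspherical in itself, which is exactly what is required. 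No passage to a cell complex, no continuous fillings, and no control on $D$-neighbourhoods are needed.
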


In the same
section we study the relationship between the {\it asymptotic}
and the {\it ordinary} connectedness at infinity, showing that,
whenever it makes sense, the former implies the latter (Proposition \ref{aspi=pi}).
In particular, in view of this implication, one deduces Theorem D from
Theorem F, and this is our line of proof of Theorem D.

Section \ref{bdry} is devoted to the proof of the following.

\begin{twG}[see Theorem \ref{d1} in the text]
Let $X$ be a geodesic metric space which is AHA. Then
\begin{enumerate}
\item if $X$ is proper and $\delta$--hyperbolic then its Gromov boundary
contains no $2$--disk;
\item if $X$ is complete and CAT(0) then its boundary (equipped with the cone topology)
contains no $2$--disk.
\end{enumerate}
\end{twG}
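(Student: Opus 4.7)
The plan is to assume toward contradiction that $\partial X$ contains an embedded topological $2$--disk $D$ and to exhibit a family of coarsely controlled $2$--spheres in $X$ that force a violation of AHA. Cases (1) and (2) are treated in parallel; the only difference is whether $\delta$--hyperbolicity or CAT(0) convexity is used to control geodesic rays.

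Fix a basepoint $x_0\in X$ and for each $\xi\in\partial X$ choose a geodesic ray $\gamma_\xi$ from $x_0$ to $\xi$ (unique in case~(2), any choice in case~(1)); set $F(\xi,t)=\gamma_\xi(t)$. The map $F$ is continuous in case~(2) and $\delta$--coarsely continuous in case~(1), with $d_X(F(\xi,t),F(\eta,t))\leqslant 2\delta$ for $t\leqslant(\xi\,|\,\eta)_{x_0}$. For each $R>0$ I form the topological $2$--sphere
\[
\Sigma_R=F(D\times\{R\})\cup F(\partial D\times[R,2R])\cup F(D\times\{2R\})\subseteq X,
\]
parametrized as $\partial(D\times[R,2R])\cong S^2$. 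Choosing a sufficiently fine triangulation of the parameter $S^2$, one realizes $\Sigma_R$ as a simplicial map $S^2\to X$ of mesh $\leqslant d_0$, with $d_0$ independent of $R$.

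Pick $\xi_0\in\mathring D$ at visual distance $\geqslant c>0$ from $\partial D$, and put $p_R=F(\xi_0,3R/2)$. Triangle inequality with pivot $x_0$ gives $d_X(p_R,F(D\times\{R,2R\}))\geqslant R/2$, while the standard divergence estimate for geodesic rays in $\delta$--hyperbolic and CAT(0) spaces (rays to boundary points at visual distance $\geqslant c$ separate linearly after time $\sim|\log c|$) gives $d_X(p_R,F(\partial D\times[R,2R]))\geqslant 2R-O(1)$, so $d_X(p_R,\mathrm{image}(\Sigma_R))\geqslant R/2-O(1)$. The core geometric claim is an \emph{enclosure} statement: there is a uniform $\rho_0$ such that every sufficiently controlled $3$--ball filling of $\Sigma_R$ in $X$ meets $N_{\rho_0}(p_R)$. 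The heuristic is that $\Sigma_R$ bounds the solid cylinder $F(D\times[R,2R])$, which contains $p_R$ in its interior, and in a contractible (or suitably locally structured) model of $X$ every $3$--chain bounded by $\Sigma_R$ has the same class in $H_3(X,X\setminus\{p_R\})$, forcing the filling to reach $p_R$. In case~(2) this is implemented via the geodesic retraction to a small CAT(0) ball around $p_R$; in case~(1) one passes to a Rips/geometric realization of $X$ where cellular local degree applies.

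Granting the enclosure claim, the contradiction with AHA is immediate. By the AHA property applied to the subset $A_R=\mathrm{image}(\Sigma_R)$ --- in the concrete form provided by Theorem~A when available --- the $d_0$--controlled sphere $\Sigma_R$ bounds a $D_1$--controlled $3$--ball inside $N_{D_1}(A_R)$, with $D_1=D_1(d_0)$. By the enclosure claim this filling meets $N_{\rho_0}(p_R)$, so $d_X(p_R,A_R)\leqslant D_1+\rho_0$; this contradicts the lower bound $d_X(p_R,A_R)\geqslant R/2-O(1)$ for $R$ large enough. The main obstacle is the enclosure claim itself: a general metric space need not support classical Alexander duality, and AHA is phrased via controlled combinatorial fillings, so the local degree calculation must be carried out in a CW/simplicial model of $X$ compatible with the AHA filling; the CAT(0) case admits the cleanest implementation, and the hyperbolic case reduces to it via a Rips approximation.
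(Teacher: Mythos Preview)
Your overall architecture---building ``truncated cone'' $2$--spheres over a disk in the boundary and showing they cannot be filled in a controlled way---matches the paper's. But the enclosure claim is a genuine gap, and the way you invoke it does not fit the AHA framework. First, AHA does not give you a filling in $X$ or in a metric neighbourhood of the image set; it gives a simplicial filling $H:B\to P_R(A_R)$ in a Rips complex, whose simplices are abstract and have no canonical realisation inside $X$. Your appeal to Theorem~A is backwards: that theorem is a \emph{criterion implying} AHA, not a consequence of it, and in particular does not yield fillings inside $N_{D_1}(A_R)$. Second, even granting some realisation in $X$, the enclosure claim needs a nontrivial local degree at $p_R$, i.e.\ $H_3(X,X\setminus\{p_R\})\ne0$; a general CAT(0) or $\delta$--hyperbolic geodesic space has no such structure (it may be $2$--dimensional, a tree cross a surface, etc.). The heuristic that $\Sigma_R$ ``bounds the solid cylinder containing $p_R$'' has no homological content without a manifold-like local model, and your parenthetical about passing to a Rips model does not supply one.

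The paper sidesteps this by replacing the $3$--dimensional enclosure argument with a $1$--dimensional homological one carried out entirely in Rips complexes. The key input is the Vietoris homology of $S^1=\partial D$: there is a scale $a>0$ such that arbitrarily fine simplicial $1$--cycles on $\partial D$ remain nontrivial in $P_a(\partial D)$. Conicality then guarantees that the projected circle $p_s\circ f$ is homologically nontrivial in $P_L(p_s(\partial D))$ for any prescribed $L$, once $s$ is large enough. Now, given a hypothetical Rips filling $H:B\to P_R(A_R)$ of the truncated-cone sphere, one \emph{slices} it: let $B_0\subseteq B$ be the union of $3$--simplices whose vertices land at height $\leqslant t_0+2R$ from $x_0$; then $\partial B_0$ minus the bottom half of $S$ is a $2$--chain $U$ with $\partial U$ equal to the middle circle $\Sigma\times\{t_0+2R\}$. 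The vertices of $U$ lie in a thin height-annulus of width $\leqslant R$, so they can be radially pushed to the single level $t_0+2R$ at the cost of inflating the Rips parameter to $3R=L$. The resulting $2$--chain exhibits $p_{t_0+2R}\circ f$ as null-homologous in $P_L(p_{t_0+2R}(\partial D))$, contradicting the Vietoris input. No local topology of $X$ is used anywhere.
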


Clearly, the conclusions of this theorem hold true for the corresponding types
of AHA groups, namely word hyperbolic ones and CAT(0) ones, respectively.
This conclusion should be contrasted with the fact that in both classes
there are examples with boundaries of arbitrary topological dimension.

In Subsection \ref{sysgap} we extend the techniques used to prove Theorem G,
and get the following analogous result for systolic complexes.

\begin{twH}[see Theorem \ref{d2} in the text]
Let $X$ be a systolic simplicial complex. Then its systolic boundary
(as defined in \cite{OP}) contains no $2$--disk. In particular, the systolic boundary
of any systolic group contains no $2$--disk.
\end{twH}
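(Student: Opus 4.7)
The plan is to follow the strategy used for Theorem G(1), replacing the Gromov boundary by the systolic boundary $\partial_S X$ of \cite{OP}. The ingredients available are: (i) every systolic complex is AHA by \cite{JS2}; (ii) for any AHA space the asymptotic homotopy $\hbox{as--}\pi_i^\infty$ vanishes for $i\ge 2$ (Theorem F); and (iii) the systolic boundary comes equipped with a shadow/visual topology relating its points to combinatorial geodesic rays in $X$ based at a fixed vertex $x_0$.

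Suppose, for contradiction, that there is a topological embedding $\varphi: D^2 \hookrightarrow \partial_S X$. For each radius $R$, define the shadow $\Sigma_R \subseteq X$ to be the subcomplex consisting of simplices at combinatorial distance $R$ from $x_0$ that lie on representatives of classes in $\varphi(D^2)$. Using the uniform fellow-traveling properties of systolic geodesic rays, which underlie the definition of $\partial_S X$ in \cite{OP}, one shows that $\Sigma_R$ is coarsely a $2$-disk whose diameter grows linearly in $R$. For $R_1 < R_2$ the union $\Sigma_{R_1} \cup \Sigma_{R_2}$, joined along rays corresponding to $\varphi(\partial D^2)$, forms a coarse $2$-sphere $A_{R_1, R_2}$ inside $X$.

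By AHA applied via Theorem A, one produces an aspherical subcomplex $Y$ with $A_{R_1, R_2} \subseteq Y \subseteq N_C(A_{R_1, R_2})$ for a universal constant $C$. Asphericity forces the coarse $2$-sphere $A_{R_1, R_2}$ to bound in $Y$. The crux is that such a bounding, whose diameter is controlled by $C$, contradicts the embedding $\varphi$: transferred to $\partial_S X$, the resulting null-homotopy of the equatorial loop $\varphi(\partial D^2)$ would have to push it across the interior of $\varphi(D^2)$, impossible for an embedded disk. Letting $R_1, R_2 \to \infty$ with $R_2 - R_1$ large sharpens the contradiction.

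The principal obstacle is the geometric step: making the shadow $\Sigma_R$ precise enough for the AHA asphericity of $Y$ to bite. In the Gromov case (Theorem G(1)), $\delta$-hyperbolicity gives automatic uniform control over fellow-travel behavior of rays asymptotic to the boundary; for systolic complexes one must extract analogous control directly from \cite{OP}'s purely combinatorial construction of $\partial_S X$ (using convexity of balls in systolic complexes and uniform projection properties of systolic rays). Once this combinatorial substitute for $\delta$-hyperbolicity is in place, the remainder of the argument follows the template of Theorem G(1) essentially verbatim, with the final statement for systolic groups being the special case where $X$ is the systolic complex on which the group acts geometrically.
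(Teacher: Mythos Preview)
Your overall strategy---build coarse $2$--spheres in $X$ out of a hypothetical $D^2\subseteq\partial_SX$ and use asphericity/AHA to force a contradiction---is the same as the paper's. But the step where you extract the contradiction is a genuine gap.

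You argue: by asphericity of $Y\subseteq N_C(A_{R_1,R_2})$, the coarse $2$--sphere bounds in $Y$; then ``transferred to $\partial_SX$, the resulting null-homotopy of the equatorial loop $\varphi(\partial D^2)$ would have to push it across the interior of $\varphi(D^2)$, impossible for an embedded disk.'' Two problems. First, there is no map from $Y$ (a subset of $X$) to $\partial_SX$, so a filling $B^3\to Y$ does not ``transfer'' to anything in the boundary. Second, even if it did, $\varphi(\partial D^2)$ \emph{is} null-homotopic in $\varphi(D^2)$; that is what being the boundary of a disk means. So the sentence does not express a contradiction.

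What the paper actually does (Proposition~\ref{d4} and its gap-function extension Proposition~\ref{g3}) is work entirely at the level of Rips complexes of subsets of $X$, never returning to $\partial_SX$. One builds a simplicial $2$--sphere $h_R:S\to P_{C_0}(X)$ as the surface of a truncated cone over $D^2$, and shows directly that it admits no simplicial extension $H:B\to P_R(h_R(S^{(0)}))$. The obstruction is homological: supposing such an $H$ exists, one slices $B$ at an intermediate height to produce a $2$--chain in $P_L$ of the image of the \emph{boundary circle} $\partial\Lambda\cong S^1$, whose boundary is the middle cross-section cycle. But Vietoris homology (Claim~1/Corollary~\ref{g4}) says that cycle is homologically nontrivial in $P_L(p_s(\partial\Lambda))$. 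This contradicts AHA, hence the existence of $D^2\subseteq\partial_SX$. The systolic input you correctly identify---uniform fellow-travelling of good geodesic rays from \cite{OP}---is packaged in the paper as the statement that $\partial_SX$ carries a gap function making the family of rays conical (Proposition~\ref{s1}); once that is in place, Proposition~\ref{g3} applies verbatim. You should replace your ``transfer to the boundary'' paragraph with this slicing/Vietoris argument; the rest of your outline is sound.
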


%===================================================================

\section{Preliminaries}
\label{prel}

\subsection{Asymptotically hereditarily aspherical groups}
\label{AHA}

Let $(X,d)$ be a metric space. For $r>0$, the \emph{Rips complex} (with the constant $r$) of $X$, denoted $P_r(X)$, is a simplicial complex defined as follows. Vertices of $P_r(X)$ are points of $X$ and a set $A\subseteq X$ spans a simplex in $P_r(X)$ iff $\mr d(x,y)\leqslant r$, for every $x,y\in A$.

\begin{de}[AHA]
\label{daha}
(1) Given an integer $i\geqslant 0$, for subsets $C\subseteq D$ of a metric space $(X,d)$
we say that \emph{$C$ is $(i;r,R)$--aspherical in $D$} if every simplicial map $f\colon S\to P_r(C)$
(here and afterwards we always consider subsets as metric spaces with the restricted metric),
where $S$ is a triangulation of the $i$--sphere $S^i$, has a simplicial extension
$F\colon B \to P_R(D)$, for some triangulation $B$ of the $(i+1)$--ball $B^{i+1}$ such that $\partial B=S$.

(2) A metric space $X$ is \emph{asymptotically hereditarily aspherical}, shortly \emph{AHA},
if for every $r>0$ there exists $R>0$ such that every subset $A\subseteq X$ is $(i;r,R)$--aspherical in itself,
for every $i\geqslant 2$.
\end{de}

Let $g_1,g_2$ be real functions with $g_i(t)\stackrel{t\to \infty}{\longrightarrow} \infty$.
We say that a map $h\colon (X,d)\to (X',d')$ is a \emph{$(g_1,g_2)$--uniform embedding} if
\begin{align*}
g_1(d(x,y))\leqslant d'(h(x),h(y)) \leqslant g_2(d(x,y)),
\end{align*}
for every $x,y\in X$.

For $N>0$, a $(g_1,g_2)$--uniform embedding $h\colon X\to X'$ is a
\emph{$(g_1,g_2,N)$--uniform equivalence} between $X$ and $X'$ if $d(z,h(w))\leqslant N$,
for every $z\in X'$ and some $w\in X$.
A map $h\colon X\to X'$ is a \emph{uniform equivalence} (shortly \emph{u.e.})
if it is a $(g_1,g_2,N)$--uniform equivalence for some $g_1,g_2$ and $N$.
The spaces $X$ and $X'$ are then called \emph{uniformly equivalent} (shortly \emph{u.e.}).
Observe that for such an $h$ there is a \emph{coarsely inverse} map $h'\colon X'\to X$,
which is u.e.\ and for which there exists $M>0$ such that
$d(x,h'\circ h(x)),d'(y,h\circ h'(y))\leqslant M$, for every $x\in X$ and $y\in X'$.

In general, the uniform equivalence is weaker than the quasi-isometric equivalence.
However, if $h\colon (X,d)\to (X',d')$ is a u.e.\ between length spaces
(in particular between Cayley graphs of groups) then $(X,d)$ is quasi-isometric to $(X',d')$;
see e.g.\ \cite[Lemma 1.10]{Roe}.

It is proved in \cite[Proposition 3.2]{JS2} that if $h\colon X\to X'$ is a uniform embedding and if $X'$ is AHA, then $X$ is AHA. In particular AHA is a u.e.\ invariant and we define a finitely generated group $G$ to be \emph{asymptotically hereditarily aspherical (AHA)} if for some (and thus for every) word metric $d_S$ (induced by a finite generating set $S$) on $G$, the metric space $(G,d_S)$ is AHA.

{We finish this subsection with an easy observation leading to many examples of AHA groups. Recall that if a finitely
generated group $G$ acts (metrically) properly by isometries on a metric space $X$ then, for any $x\in X$, the orbit map $G\ni g\mapsto gx \in X$
is a uniform embedding (for $G$ equipped with a word metric).}
{
\begin{prop}
\label{propAHA}
A finitely generated group acting properly by isometries on an AHA metric space is AHA. 
\end{prop}
}
In particular, every finitely generated subgroup of an AHA group is AHA (see \cite[Corollary 3.4]{JS2}).

\subsection{Simplicial complexes}
\label{simpl}

Let $X$ be a simplicial complex. The $i$--skeleton of $X$ is denoted by $X^{(i)}$.
A subcomplex $Y$ of $X$ is \emph{full} if
every subset $A$ of vertices of $Y$ contained in a simplex of $X$, is
contained in a simplex of $Y$.
For a finite set $A=\lk v_1,\ldots,v_k \rk $ of vertices of $X$, by $\mr{span}(A)$ or by $\langle v_1,\ldots,v_k \rangle$ we denote the \emph{span} of $A$, i.e.\ the smallest full subcomplex of $X$ containing $A$.
A simplicial complex $X$ is \emph{flag} whenever every finite set of vertices of $X$ joined pairwise by edges in $X$, is contained in a simplex of $X$.
A \emph{link} of a simplex $\sigma$ of $X$ is a simplicial complex $X_{\sigma}=\lk \tau | \; \tau \in X \; \& \; \tau \cap \sigma=\emptyset \; \& \; \mr{span}(\tau \cup \sigma)\in X \rk$.

Let $k\geqslant 4$. A \emph{$k$--cycle} $(v_1,\ldots,v_{k})$ is a triangulation of a circle consisting of $k$ vertices: $v_1,\ldots,v_{k}$, and $k$ edges: $\langle v_i,v_{i+1}\rangle $ and $\langle v_k,v_{1}\rangle $.
A \emph{$k$--wheel (in $X$)} $(v_0;v_1,\ldots,v_k)$ (where $v_i$'s are vertices of
$X$) is a subcomplex of $X$ such that $(v_1,\ldots,v_k)$ is a full
$k$--cycle (i.e.\ a full subcomplex being a cycle) and $v_0$ is joined (by an edge in $X$) with $v_i$, for $i=1,\ldots,k$.
A \emph{$k$--wheel with a pendant triangle (in $X$)} $(v_0;v_1,\ldots,v_k;t)$ is a
subcomplex of $X$ being the union of a $k$--wheel $(v_0;v_1,\ldots,v_k)$ and a triangle
$\langle v_1,v_2,t \rangle \in X$, with $t\neq v_i$, $i=0,1,\ldots,k$.
A flag simplicial complex $X$ is \emph{$k$--large} if there are no $j$--cycles being full subcomplexes of $X$, for $j<k$.
$X$ is \emph{locally $k$--large} if all its links are $k$--large.

If not stated otherwise, speaking about a simplicial complex $X$,
we always consider a metric on the $0$--skeleton $X^{(0)}$,
defined as the number of edges in the shortest $1$--skeleton path joining two given vertices.
Given a nonnegative integer $i$ and a vertex $v\in X$, a (combinatorial) \emph{ball} $B_i(v)$
(respectively, \emph{sphere} $S_i(v)$) \emph{of radius $i$
around $v$} is a full subcomplex of $X$ spanned by vertices at distance
at most $i$ (respectively, at distance $i$) from $v$.

\subsection{(Weakly) systolic groups}
\label{weakp}

Recall -- see \cite{JS1} (or \cite{Che1}, where the name \emph{bridged complexes} is used) -- that a flag simplicial complex $X$ is called \emph{systolic} if it is simply connected and locally $6$--large.

In \cite{O-sdn}, the following class of \emph{weakly systolic complexes} was introduced as a generalization of
systolic complexes (cf.\ also \cite{CO}).

\begin{de}[Weakly systolic complex]
\label{weaks}
A flag simplicial complex $X$ is \emph {weakly systolic} if for every vertex $v$ of $X$
and for every $i=1,2,\ldots$ the following condition holds.
For every simplex $\sigma \subseteq S_{i+1}(v,X)$ the intersection $X_{\sigma}\cap B_i(v,X)$ is a single non-empty simplex.
\end{de}

Systolic complexes are weakly systolic \cite{JS1}. For every simply connected locally $5$--large cubical complex
(i.e.\ CAT(-1) cubical complex), there is a canonically associated weakly systolic simplicial complex,
to which any group action by automorphisms is induced \cite{O-sdn} (see Proposition~\ref{p:thcat-1} below).
It follows that the class of \emph{weakly systolic groups} (i.e.\ groups acting geometrically by simplicial automorphisms on weakly systolic complexes) contains essentially classes of \emph{systolic groups} (i.e.\ groups acting geometrically by simplicial automorphisms on systolic complexes) and ``CAT(-1) cubical groups''. For other classes of weakly systolic groups see \cite{O-sdn}.

The following result is an almost immediate consequence of the definition
of weakly systolic complexes, see \cite{O-sdn}.

\begin{lem}
\label{ws-contr}
Combinatorial balls in finite dimensional weakly systolic complexes are contractible. In particular,
any finite dimensional weakly systolic complex is contractible.
\end{lem}

Next definition presents a property of simplicial complexes which, together with
simple connectedness, characterizes weak systolicity. This definition, as well as
the next result, are taken from \cite{O-sdn}.

\begin{de}[$SD_2^{\ast}$ property]
\label{sd2*}
A flag simplicial complex $X$ satisfies the $SD_2^{\ast}$ property if the following two conditions hold.

(a) $X$ does not contain $4$--wheels,

(b) for every $5$--wheel with a pendant triangle $\widehat W$ in $X$, there exists a vertex $v$ with $\widehat W\subseteq B_1(v,X)$.
\end{de}

\begin{tw} [\cite{O-sdn}]
\label{logl}
A simplicial complex $X$ is weakly systolic if and only if
it is simply connected and satisfies the condition $SD_2^{\ast}$.
In particular, the universal cover of a simplicial complex satisfying the
condition $SD_2^{\ast}$ is weakly systolic. Consequently, complexes satisfying
the condition $SD_2^{\ast}$ are aspherical.
\end{tw}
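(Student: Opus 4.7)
\emph{Proof plan.} The statement has two directions, together with consequences for the universal cover and for asphericity.

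For the forward implication, assume $X$ is weakly systolic. Simple connectedness is immediate from Lemma~\ref{ws-contr}. To verify $SD_2^{\ast}(\mathrm{a})$, given a putative $4$--wheel $(v_0;v_1,v_2,v_3,v_4)$ I would take $v_1$ as basepoint; then $v_0,v_2,v_4\in S_1(v_1,X)$, while fullness of the $4$--cycle forces $v_3\in S_2(v_1,X)$. The weakly systolic condition applied to $\sigma=\{v_3\}$ would require $X_{v_3}\cap B_1(v_1,X)$ to be a single simplex, but it contains the three vertices $v_0,v_2,v_4$, and $v_2,v_4$ are not adjacent by fullness, a contradiction. For $SD_2^{\ast}(\mathrm{b})$, given a $5$--wheel with a pendant triangle $(v_0;v_1,\ldots,v_5;t)$, the case $v_0\sim t$ is immediate (take $v=v_0$); otherwise one has $t\in S_2(v_0,X)$ and the weakly systolic condition with basepoint $v_0$ produces a single simplex $\tau=X_t\cap B_1(v_0,X)$ containing the edge $\langle v_1,v_2\rangle$, and I would then verify that any vertex of $\tau$ is a common neighbor of the entire configuration $\widehat W$.

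For the backward implication, assume $X$ is simply connected and satisfies $SD_2^{\ast}$. I would proceed by induction on $i\geqslant 1$, proving that for any vertex $v$ and any simplex $\sigma\subseteq S_{i+1}(v,X)$ the intersection $X_\sigma\cap B_i(v,X)$ is a single non-empty simplex. Non-emptiness follows from picking a geodesic from $v$ to a vertex of $\sigma$. The single-simplex property reduces to showing that any two vertices $w_1,w_2\in X_\sigma\cap B_i(v,X)$ (i) are adjacent, and (ii) lie in a common simplex of that intersection. For (i), a non-adjacent pair $w_1,w_2$ together with a vertex of $\sigma$ and a chosen projection in $B_{i-1}(v,X)$ provided by the inductive hypothesis yields a $4$--wheel, contradicting $SD_2^{\ast}(\mathrm{a})$. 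For (ii), one applies $SD_2^{\ast}(\mathrm{b})$ to a $5$--wheel with pendant triangle assembled from such an edge together with further vertices of $\sigma$ and their projections; simple connectedness enters here through van Kampen style disc-filling arguments that allow one to replace locally constructed configurations by globally coherent ones.

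The last two assertions are formal consequences. The axioms $SD_2^{\ast}(\mathrm{a})$ and $(\mathrm{b})$ concern only $1$--neighborhoods of vertices and are therefore preserved by any simplicial covering: if $X$ satisfies $SD_2^{\ast}$ then so does its universal cover $\widetilde X$, which is moreover simply connected and hence weakly systolic by the equivalence just established. Lemma~\ref{ws-contr} then yields contractibility of $\widetilde X$, so $X$ is aspherical.

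The main obstacle is the inductive step in the backward direction, namely converting the purely local axioms of $SD_2^{\ast}$ into the global sphere condition defining weak systolicity. In particular, the flag-type step (ii) above, which requires locating an appropriate $5$--wheel with a pendant triangle and then using simple connectedness to globalize the local filling, is the delicate point; it is typically carried out with careful bookkeeping of geodesic choices between $v$ and $\sigma$.
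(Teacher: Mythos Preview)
The paper does not prove this theorem: it is stated with attribution to \cite{O-sdn} and no argument is given in the present paper. So there is nothing here to compare your proposal against; the authors simply import the result.

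That said, a brief assessment of your sketch on its own merits. The forward direction for $SD_2^{\ast}(\mathrm{a})$ is fine. Your argument for $SD_2^{\ast}(\mathrm{b})$, however, has a gap: when $t\not\sim v_0$ you correctly obtain a simplex $\tau=X_t\cap B_1(v_0,X)$ containing the edge $\langle v_1,v_2\rangle$, but your claim that ``any vertex of $\tau$ is a common neighbor of the entire configuration $\widehat W$'' is not justified and is in fact false as stated --- the vertices $v_1,v_2$ themselves lie in $\tau$, yet neither is adjacent to $v_3$ or $v_4$ since the $5$--cycle is full. One needs a further argument (typically iterating the defining simplex condition from several basepoints) to locate the required common neighbor. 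For the backward direction you correctly identify the inductive scheme and the genuine difficulty; your step (i) ruling out non-adjacent $w_1,w_2$ is on the right track, but step (ii) as written is too vague to count as a proof, and you rightly flag it as the delicate point. The actual argument in \cite{O-sdn} involves a careful local-to-global analysis in which simple connectedness is used to fill minimal-area discs and the $SD_2^{\ast}$ conditions are applied to reduce their combinatorial complexity.
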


In connection with the AHA property,
we are interested in a subclass of weakly systolic complexes
described by the following additional condition,
distinguished and studied in \cite{O-sdn}.

\begin{de}
\label{lasf}
A flag simplicial complex $X$ is called a \emph{complex with $SD_2^{\ast}$ links} if $X$ and every of its links satisfy the property $SD_2^{\ast}$.
\end{de}

Observe that, by Theorem \ref{logl}, the universal cover of a complex with $SD_2^{\ast}$ links, is weakly systolic. Moreover, every $6$--large simplicial complex (respectively, systolic complex) is a complex with $SD_2^{\ast}$ links (respectively, weakly systolic complex with $SD_2^{\ast}$ links). In Subsection \ref{chcg} we provide further (new) examples of high dimensional complexes of this type. For the construction we need the following fact, established in \cite{O-sdn}.

\begin{prop} [\cite{O-sdn}]
\label{fullasf}
Let $X$ be a finite-dimensional flag simplicial complex.
Then the following two conditions are equivalent.

i) $X$ is a complex with $SD_2^{\ast}$ links.

ii) Every full subcomplex of $X$ satisfies the $SD_2^{\ast}$ property.
\end{prop}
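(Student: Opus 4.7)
The plan is to prove each implication separately. For $(ii)\Rightarrow(i)$, observe that $X$ is itself a full subcomplex of $X$, so it satisfies $SD_2^\ast$. Moreover, since $X$ is flag, each link $X_\sigma$ is (isomorphic to) the full subcomplex of $X$ spanned by the vertices adjacent to every vertex of $\sigma$, and hence satisfies $SD_2^\ast$ by (ii) as well. Together these give (i).

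For the harder direction $(i)\Rightarrow(ii)$, I would proceed by induction on $n=\dim X$. The base case $n\leqslant 1$ is immediate: a $1$--dimensional flag complex contains no $2$--simplices, and hence no full subcomplex carries a $5$--wheel with a pendant triangle; the $4$--wheel condition (a) is inherited from $X$ by fullness of $Y$, since a full $4$--cycle in $Y$ is automatically full in $X$ (any diagonal edge would lie in $Y$ by fullness) and the spokes persist. This fullness argument in fact disposes of (a) in every dimension, without appeal to induction.

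The inductive step concerns condition (b). Let $Y\subseteq X$ be full and let $\widehat W=(v_0;v_1,\ldots,v_5;t)$ be a $5$--wheel with pendant triangle in $Y$. By fullness of $Y$ the same $\widehat W$ is a $5$--wheel with pendant triangle in $X$, so (b) for $X$ supplies $v\in X$ with $\widehat W\subseteq B_1(v,X)$. If $v\in Y$ (which covers in particular $v\in\widehat W$) we are done. Otherwise set $L:=X_v$. Its dimension is at most $n-1$, and its own links are links in $X$ of simplices containing $v$, so by (i) for $X$ the complex $L$ is again a complex with $SD_2^\ast$ links. Let $Y'$ be the full subcomplex of $L$ spanned by $Y^{(0)}\cap L^{(0)}$; it is full in $L$. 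By the inductive hypothesis applied to $L$, $Y'$ satisfies $SD_2^\ast$. Every vertex of $\widehat W$ lies in $Y$ and is adjacent to $v$, hence in $Y'$; the edges and the pendant triangle of $\widehat W$ lie in $L$ between vertices of $Y'$, so they lie in $Y'$ by fullness. Thus $\widehat W$ is a $5$--wheel with pendant triangle in $Y'$, and (b) for $Y'$ delivers $u\in Y'\subseteq Y$ with $\widehat W\subseteq B_1(u,Y')$. Fullness of $Y$ in $X$ finally yields $\widehat W\subseteq B_1(u,Y)$, as required.

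The main technical obstacle is keeping the fullness bookkeeping straight through the chain $\widehat W\subseteq Y'\subseteq L\subseteq X$: one must verify both that $\widehat W$ truly descends to $L$ as a $5$--wheel with pendant triangle (so that (b) in $Y'$ can be invoked), and that the center $u$ produced in $Y'$ lifts back to a center of $\widehat W$ in $Y$. The finite-dimensionality hypothesis on $X$ is used exactly to terminate the induction on $n$.
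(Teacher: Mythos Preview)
The paper does not supply a proof of this proposition; it is quoted from \cite{O-sdn}. So there is no in-paper argument to compare against, and your task reduces to whether the proof you give is correct on its own.

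Your argument is sound. The direction $(ii)\Rightarrow(i)$ is fine because in a flag complex the link $X_\sigma$ is indeed the full subcomplex spanned by the common neighbours of the vertices of $\sigma$. For $(i)\Rightarrow(ii)$, condition (a) transfers to any full $Y$ exactly as you say: a full $4$--cycle in $Y$ stays full in $X$, so a $4$--wheel in $Y$ would be one in $X$. For condition (b), the descent to $L=X_v$ works because $L$ is again a flag complex with $SD_2^{\ast}$ links (its links are links in $X$ of simplices through $v$), $\dim L<\dim X$, and the full subcomplex $Y'$ of $L$ on $Y^{(0)}\cap L^{(0)}$ really does contain $\widehat W$ as a $5$--wheel with pendant triangle: all seven vertices lie in $Y'$, every edge and the pendant triangle of $\widehat W$ sit in $L$ by flagness of $X$ (each such face together with $v$ spans a simplex), and the $5$--cycle remains diagonal-free in $Y'$ because a diagonal there would, via $L\subseteq X$ and fullness of $Y$, force a diagonal already in $Y$. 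The centre $u\in Y'$ produced by the inductive hypothesis lies in $Y$, and adjacency in $Y'$ implies adjacency in $X$ and hence, by fullness, in $Y$; so $\widehat W\subseteq B_1(u,Y)$. The finite-dimensionality of $X$ is used precisely to anchor the induction.

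One small point of presentation: in the base case $n\leqslant1$ there can be no wheels at all (a $k$--wheel in a flag complex forces $2$--simplices), so both (a) and (b) are vacuous there; your separate treatment of (a) via fullness is what actually carries that condition through in higher dimensions.
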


Weakly systolic complexes with $SD_2^{\ast}$ links seem to be (asymptotically)
closest to systolic complexes. One instance of this claim is the property
from the corollary below. To see it,
observe that, by Proposition \ref{fullasf}, every full subcomplex
of a finite dimensional weakly systolic complex with $SD_2^{\ast}$ links
has the property $SD_2^{\ast}$. By Theorem \ref{logl}, every such subcomplex
is aspherical. This shows the following.

\begin{cor} [\cite{O-sdn}]
\label{ws-asph}
Let $X$ be a finite dimensional weakly systolic simplicial complex with
$SD_2^{\ast}$ links. Then every full subcomplex of $X$ is aspherical.
\end{cor}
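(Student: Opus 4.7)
The plan is essentially to chain together two results already available in the excerpt. Since $X$ is a finite dimensional weakly systolic complex with $SD_2^{\ast}$ links, Proposition \ref{fullasf} applies and gives the key structural input: every full subcomplex of $X$ satisfies the $SD_2^{\ast}$ property. So the first step is just to invoke this equivalence, which replaces the hypothesis about links with a hypothesis uniformly about every full subcomplex.

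Next I would hand the resulting full subcomplex $Y \subseteq X$ to Theorem \ref{logl}, whose final sentence states that complexes satisfying $SD_2^{\ast}$ are aspherical. Since $Y$ now satisfies $SD_2^{\ast}$ by the previous step, Theorem \ref{logl} yields that $Y$ is aspherical, which is exactly what we need. Quantifying over all full subcomplexes $Y$ of $X$ finishes the argument.

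In terms of the "main obstacle": there is essentially none, because the heavy lifting has been done in Proposition \ref{fullasf} and Theorem \ref{logl}. One small point worth being careful about is that Theorem \ref{logl} does not require $Y$ to be simply connected to conclude asphericity — the statement explicitly asserts asphericity for all $SD_2^{\ast}$ complexes (deduced by passing to the universal cover, which is weakly systolic and hence contractible by Lemma \ref{ws-contr}). So no extra simple-connectivity hypothesis on $Y$ is needed, and the two-line chain genuinely closes the proof.
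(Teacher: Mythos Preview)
Your proposal is correct and follows essentially the same two-step argument as the paper: apply Proposition~\ref{fullasf} to get that every full subcomplex satisfies $SD_2^{\ast}$, then apply Theorem~\ref{logl} to conclude asphericity. Your remark that simple connectedness of the subcomplex is not needed (since Theorem~\ref{logl} handles this via the universal cover) is exactly the right observation.
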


\subsection{Cubical complexes}
\label{cc}
\emph{Cubical complexes} are cell complexes in which every cell is isomorphic to a standard (Euclidean or hyperbolic) cube; see e.g.\ \cite[Appendix A]{Davis} for a precise definition. It means in particular that two cubes in a cubical complex intersect along a single subcube. The \emph{link} $Y_k$ of a cube $k$ in a cubical complex $Y$ is a simplicial complex defined in the following way.
Vertices of $Y_k$ are the minimal (with respect to the inclusion) cubes containing $k$ properly, and the set of such cubes spans a simplex of $Y_k$ if they are all contained in a common cube of $Y$.
A cubical complex is \emph{locally $k$--large} (respectively, \emph{locally flag}) if links of its vertices are $k$--large (respectively, flag).

A lemma of Gromov (see \cite[Appendix I]{Davis}) states that a simply connected locally flag (respectively, locally $5$--large) cubical complex admits a metric of non-positive (respectively, negative) curvature, or CAT(0) (respectively, CAT(-1)) metric.

\begin{de}[Thickening]
\label{thick}
Let $Y$ be a cubical complex. The \emph{thickening} $Th(Y)$ of $Y$ is
a simplicial complex defined in the following way. Vertices of
$Th(Y)$ are vertices of $Y$. Vertices $v_1,\ldots,v_k$
of $Th(Y)$ span a simplex iff they (as
vertices of $Y$) are contained in a common cube of $Y$.
\end{de}

The next lemma will be used later in this subsection and in Section~\ref{weak} (in the proof of Corollary~\ref{corCox}).

\begin{lem}[$SD_2^{\ast}$ links in thickenings]
\label{sd2th}
Let $Y$ be a cubical complex with links satisfying the $SD_2^{\ast}$ property. Then links in $Th(Y)$ satisfy the $SD_2^{\ast}$ property as well.
\end{lem}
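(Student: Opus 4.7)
The plan is to fix a simplex $\sigma$ of $Th(Y)$, let $c = c_\sigma$ denote the smallest cube of $Y$ whose vertex set contains $\mathrm{vert}(\sigma)$, and verify both clauses of Definition~\ref{sd2*} for the simplicial link $Th(Y)_\sigma$ by reducing the question to the cubical link $Y_c$. The first structural step is to observe, using the minimality of $c$, the join decomposition
\[ Th(Y)_\sigma \;=\; A \ast L, \]
where $A$ is the (possibly empty) full simplex on $\mathrm{vert}(c)\setminus\mathrm{vert}(\sigma)$, and $L$ is the full subcomplex of $Th(Y)_\sigma$ spanned by the vertices of $Y$ lying outside $c$. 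The identity holds because any cube of $Y$ containing $\mathrm{vert}(\sigma)$ must contain all of $c$, so that adjoining arbitrary subsets of $\mathrm{vert}(c)$ to a given vertex set already lying in a cube does not affect the ``in a common cube'' condition.

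Next, I analyse how the property $SD_2^\ast$ behaves under joining with a simplex. Two non-adjacent vertices of $A\ast L$ must both lie in $L$ (vertices in $A$ are pairwise adjacent and are joined to every vertex of $L$), so any full $4$-cycle or full $5$-cycle of $A\ast L$ lies entirely in $L$. A case analysis on whether the apex $v_0$ of a putative $4$-wheel (respectively the apex and the pendant vertex $t$ of a putative $5$-wheel with pendant triangle) lies in $A$ or in $L$ shows that $A\ast L$ satisfies $SD_2^\ast$ provided $L$ does and, when $A\neq\emptyset$, additionally contains no full $4$-cycle. Indeed, if $v_0\in A$ or $t\in A$, that very vertex may be used as the ball centre $u$ required by clause~(b); and if $v_0\in L$, the configuration restricts to $L$ and $SD_2^\ast(L)$ applies directly.

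The remaining task is to verify the corresponding properties of $L=Th(Y)_{Th(c)}$ from the $SD_2^\ast$ hypothesis on cubical links of $Y$. To each vertex $w\in L$ I will assign the simplex $\varphi(w)\subseteq Y_c^{(0)}$ corresponding to the smallest cube $c_w$ of $Y$ containing $c\cup\{w\}$, namely $\varphi(w)=\{k\;\text{minimal with}\;c\subsetneq k\subseteq c_w\}$. Using the product structure $c_w\cong c\times[0,1]^{\dim c_w-\dim c}$ together with the flagness of $Y_c$ implicit in the $SD_2^\ast$ hypothesis, one verifies the translation rule
\[ \{w_1,\ldots,w_m\}\;\text{spans a simplex of}\;L \;\iff\; \textstyle\bigcup_i\varphi(w_i)\;\text{spans a simplex of}\;Y_c. \]
A $4$-wheel (respectively a $5$-wheel with pendant triangle) in $L$ then translates, after choosing vertex representatives $k_i\in\varphi(w_i)$, into the same configuration in $Y_c$: this contradicts clause~(a) of $SD_2^\ast(Y_c)$ (respectively, via clause~(b) for $Y_c$, yields a ball-centre vertex in $Y_c$ which lifts through $\varphi$ to a suitable $u\in L$).

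The main obstacle is to establish the absence of full $4$-cycles in $L$ required in the case $A\neq\emptyset$. The difficulty is that $SD_2^\ast(Y_c)$ forbids $4$-wheels but not full $4$-cycles, so a full $4$-cycle in $L$ only translates via $\varphi$ into a full $4$-cycle in $Y_c$, which on the face of it is allowed. The resolution will exploit the fact that the existence of a vertex $v_0\in\mathrm{vert}(c)\setminus\mathrm{vert}(\sigma)$ forcing $A\neq\emptyset$ provides, through the product structure of the cubes $c_{w_i}$ around $c$ and the $SD_2^\ast$ hypothesis at a cubical link of $Y$ other than $Y_c$ (in particular at a suitable vertex link or higher-dimensional cube link meeting $c$), an auxiliary vertex that converts the translated full $4$-cycle into a forbidden $4$-wheel in some cubical link of $Y$. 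Carefully matching the combinatorics of the thickening with the product structure of cubes of $Y$ around $c$, and verifying that the choices of vertex representatives $k_i\in\varphi(w_i)$ can be made so as to preserve the fullness of the cycle, is the technical core of the argument.
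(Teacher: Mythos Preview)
Your approach is essentially the paper's: both pass from a vertex $w$ of $Th(Y)_\sigma$ to the simplex $\varphi(w)$ of $Y_c$ (the paper writes $A_w$ and $k$ for your $\varphi(w)$ and $c$) and then translate a forbidden configuration into $Y_c$ by choosing representatives inside the $\varphi(w_i)$; this is exactly the content of the paper's Claims~1 and~2. Your join decomposition $Th(Y)_\sigma = A * L$ is not in the paper, but it is a clean device for disposing of the easy cases (apex $v_0$ or pendant $t$ lying in $A$) which the paper treats only implicitly.

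The ``main obstacle'' you flag---ruling out full $4$-cycles in $L$ when $A \neq \emptyset$---is genuine, and is precisely the point the paper skates over with ``we proceed similarly'' for condition~(a): when the apex $v_0$ of a $4$-wheel lies in $k$, intersecting cubes through $v_0$ no longer produces a vertex of $Y_k$. The resolution is cleaner than your sketch suggests. A full $4$-cycle in $L$ translates, via the Claim~1 mechanism alone, to a full $4$-cycle in $Y_c$. Since $A \neq \emptyset$ forces $\dim c \geq 1$, take any codimension-$1$ face $c' \subset c$; one checks that $Y_c$ is canonically isomorphic to the link of the vertex $c$ in $Y_{c'}$ (a cube $q\supsetneq c$ of dimension $\dim c+1$ corresponds to its unique codimension-$1$ face containing $c'$ other than $c$), so the translated full $4$-cycle together with the vertex $c$ forms a $4$-wheel in $Y_{c'}$, contradicting $SD_2^\ast(Y_{c'})$. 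No delicate product-structure bookkeeping is needed.
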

\begin{proof}
First, we show condition (b) of the definition of the $SD_2^{\ast}$ property (Definition \ref{sd2*}).
For a given simplex $\sigma$ in $Th(Y)$ we have to show that every $5$--wheel with a pendant triangle $\widehat W$ in $Th(Y)_{\sigma}$ is contained in some ball of radius $1$ in $Th(Y)_{\sigma}$.
The proof follows the idea of the proof of \cite[Lemma 3.2]{O-chcg}, whose notation we adapt here.

Let $k$ be the minimal cube in $Y$ containing all vertices of $\sigma$ (treated as vertices of $Y$).
Recall that the $0$--skeleton of the link $Y_k$ is identified with the set of minimal cubes of $Y$ properly containing $k$.
For a vertex $v'\in Th(Y)_{\sigma}$, let $A_{v'}\subseteq Y_k^{(0)}$ be the set of all cubes in $Y_k^{(0)}$ belonging to the minimal cube containing $k$ and $v'$.
Let $\widehat W=(v_0;v_1,\ldots,v_5;t)$ be a $5$--wheel with a pendant triangle in $Th(Y)_{\sigma}$.
We will find a corresponding $5$--wheel with a pendant triangle $\widehat W'$ in $Y_k$. Since $Y_k$ satisfies the $SD_2^{\ast}$ property, $\widehat W'$ will be contained in some ball of radius $1$ in $Y_k$ and, consequently, $\widehat W$ will be contained in a ball of radius $1$ in $Th(Y)_{\sigma}$.

\medskip\noindent
{\bf Claim 1.} \emph{For $i\neq j$, $i\neq j\pm 1$ and $\{i,j\}\neq \{1,5\}$, there exist vertices (of $Y_k$) $z^i_j\in A_{v_i}$ and $z^j_i\in A_{v_j}$ not contained in a common cube containing $k$, and thus not connected by an edge in $Y_k$.}

\medskip\noindent
\emph{Proof of Claim 1:}
If every vertex from $A_{v_i}$ is connected by an edge (in $Y_k$) with any vertex from $A_{v_j}$ then, by the flagness of links in $Y$, all cubes from $A_{v_i}\cup A_{v_j}$ are contained in a common cube $K$ containing $k$. It follows then that $v_i,v_j \in K$ which contradicts the fact that the cycle $(v_1,v_2,\ldots,v_5)$ has no diagonal (since $\widehat W$ is a $5$--wheel with a pendant triangle in $Th(Y)_{\sigma}$.
\medskip

Observe that, since $k$ and $v_i,v_{i+1}$ (similarly $k,v_5,v_1$) are contained in common cube, we have that $\langle z^i_j, z^{i+1}_l \rangle \in Y_k$ (similarly $\langle z^5_j, z^{1}_l \rangle \in Y_k$), i.e.\ $z^i_j$ and $z^{i+1}_l$ are joined by an edge in $Y_k$.

\medskip\noindent
{\bf Claim 2.} \emph{$\langle z^1_3,z^4_2\rangle \notin Y_k$, $\langle z^2_4,z^5_3\rangle \notin Y_k$, and $\langle z^3_1,z^5_3\rangle \notin Y_k$.}

\medskip\noindent
\emph{Proof of Claim 2:}
If $\langle z^1_3,z^4_2\rangle \in Y_k$ then, by Claim 1, the $4$--cycle $(z^1_3,z^2_4,z^3_1,z^4_2)$
 has no diagonal, i.e.\ is a full subcomplex in $Y_k$.
For each $i=1,2,3,4$, consider a cube containing $k$ and $v_0,v_i$. Since all these cubes intersect, there exists a vertex $u\in Y_k$ contained in all of them. Then the complex $(u;z^1_3,z^2_4,z^3_1,z^4_2)$ is a $4$--wheel in $Y_k$, contrary to our assumptions.
This shows that $\langle z^1_3,z^4_2\rangle \notin Y_k$. Similarly, $\langle z^2_4,z^5_3\rangle \notin Y_k$. Consequently, an analogous argument shows that $\langle z^3_1,z^5_3\rangle \notin Y_k$. Thus the claim follows.
\medskip

From Claim 2 it follows that the cycle $c'=(z^1_3,z^2_4,z^3_1,z^4_2,z^5_3)$ is a $5$--cycle without diagonal.
Furthermore, for each $i=1,\ldots, 5$, consider a cube containing $k$ and $v_0,v_i$. Since all these cubes intersect, there exists a vertex $z\in Y_k$ contained in all of them. Clearly $(z;z^1_3,z^2_4,z^3_1,z^4_2,z^5_3)$ is a $5$--wheel in $Y_k$. Similarly we find a vertex $s\in A_t$, such that $\widehat W'=(z;z^1_3,z^2_4,z^3_1,z^4_2,z^5_3;s)$ is a $5$--wheel with a pendant triangle in $Y_k$. Then, by our assumptions on links in $Y$, there exists a vertex $w'\in Y_k$ with $\widehat W'\subseteq B_1(w',Y_k)$. It follows that there is a vertex $w\in Th(Y)_{\sigma}$ (any vertex of the cube $w'$ outside $k$ can be chosen as $w$) such that  $\widehat W \subseteq B_1(w,Th(Y)_{\sigma})$, hence condition (b) of Definition \ref{sd2*} holds.
\medskip

For condition (a) of the definition of the $SD_2^{\ast}$ property we proceed similarly. That is, assuming there is a $4$--wheel in a link of $Th(Y)$ we find a $4$--wheel in the corresponding link of $Y$ -- as done for $5$--wheels above.
This leads to a contradiction, establishing condition (a).
\end{proof}

The following result was proved in \cite{O-sdn}. The proof given there is in a large part very similar (though in a more general setting) to the proof of Lemma~\ref{sd2th} given above.

\begin{prop}[\cite{O-sdn}]
\label{p:thcat-1}
The thickening of a simply connected locally $5$--large (i.e.\ CAT($-1$)) cubical complex is a weakly systolic complex.
\end{prop}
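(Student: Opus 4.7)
By Theorem~\ref{logl}, it suffices to show that $Th(Y)$ is simply connected and satisfies the $SD_2^{\ast}$ property.

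For simple connectedness, the strategy is to exhibit a homotopy equivalence $Y \simeq Th(Y)$ via a nerve argument. Cover $Y$ by its closed cubes and $Th(Y)$ by the full simplices spanned by the vertex sets of these cubes. Both covers are indexed by the poset of cubes of $Y$, every member of each cover is contractible, and any non-empty finite intersection is again a single cube (respectively a single simplex), namely the common subcube of the cubes in question. Hence both are good covers with the same nerve, and the nerve theorem yields $Y \simeq Th(Y)$. Simple connectedness of $Y$ then transfers to $Th(Y)$.

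For the $SD_2^{\ast}$ property of $Th(Y)$ itself, the plan is to run the combinatorial scheme of the proof of Lemma~\ref{sd2th} with the auxiliary link $Th(Y)_{\sigma}$ and cube $k$ replaced by $Th(Y)$ itself and by the central vertex $v_0$ of a putative wheel. Given a $4$-wheel or a $5$-wheel with pendant triangle $\widehat{W}$ in $Th(Y)$ with center $v_0$, I would assign to each outer vertex $v_i$ the set $A_{v_i} \subseteq Y_{v_0}^{(0)}$ of edges of $Y$ at $v_0$ contained in the minimal cube of $Y$ containing $\{v_0, v_i\}$. Mirroring Claims~1 and~2 from the proof of Lemma~\ref{sd2th}, one extracts witnesses $z^i_j \in A_{v_i}$ that are not joined by an edge in $Y_{v_0}$ for the appropriate non-consecutive pairs, producing a full $4$-cycle (respectively a $5$-wheel with pendant triangle) $\widehat{W}'$ inside the cubical link $Y_{v_0}$. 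Since $Y$ is locally $5$-large, $Y_{v_0}$ is $5$-large and in particular contains no full $4$-cycle; this rules out the $4$-wheel case and yields condition~(a) for $Th(Y)$. For condition~(b), applying the $SD_2^{\ast}$ property of $Y_{v_0}$ to $\widehat{W}'$ furnishes a vertex $w'$ with $\widehat{W}' \subseteq B_1(w', Y_{v_0})$, and then the non-$v_0$ endpoint $w$ of the edge $w'$ of $Y$ satisfies $\widehat{W} \subseteq B_1(w, Th(Y))$.

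The main obstacle, as in Lemma~\ref{sd2th}, lies in the combinatorial bookkeeping that relates cube-incidences in $Y$ to edge-adjacencies in $Th(Y)$ and in $Y_{v_0}$. One has to verify that the witnesses $z^i_j$ can be chosen consistently to form the desired configuration $\widehat{W}'$, and that the final lifted vertex $w$ is genuinely $Th(Y)$-adjacent to every vertex of $\widehat{W}$, including the pendant vertex $t$. A secondary delicate point is the implication ``$5$-large $\Rightarrow SD_2^{\ast}$'' for the cubical link $Y_{v_0}$: condition~(a) is immediate from the absence of full $4$-cycles, whereas condition~(b) in the $5$-large setting requires either a short separate combinatorial argument or a direct appeal to the relevant statements in~\cite{O-sdn}.
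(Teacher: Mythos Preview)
The paper does not actually prove this proposition; it cites \cite{O-sdn} and remarks only that the proof there is ``in a large part very similar (though in a more general setting)'' to that of Lemma~\ref{sd2th}. Your overall plan---simple connectedness via a nerve-type homotopy equivalence $Y\simeq Th(Y)$, then the $SD_2^{\ast}$ property by the combinatorial scheme of Lemma~\ref{sd2th}---is the natural one, and the simple-connectedness step is fine.

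The gap is in condition~(b). In Lemma~\ref{sd2th} the wheel $\widehat W=(v_0;v_1,\dots,v_5;t)$ lives in a link $Th(Y)_\sigma$, and the construction of the auxiliary wheel $\widehat W'$ in $Y_k$ uses the centre $v_0$ in an essential way: the centre $z$ of $\widehat W'$ arises from cubes that contain $k$ together with $v_0$ and the various $v_i$, and the pendant vertex $s$ comes from $A_t$, which makes sense because $t\in Th(Y)_\sigma$ is adjacent to~$\sigma$. When you transport this to a wheel in $Th(Y)$ itself and pass to the cubical link $Y_{v_0}$ at the centre, both ingredients vanish: there is no further vertex to play the role that $v_0$ played, hence no candidate for the centre $z$ of $\widehat W'$; and the pendant vertex $t$ need not be adjacent to $v_0$, so $A_t\subseteq Y_{v_0}^{(0)}$ is not even defined. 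What your construction actually produces in $Y_{v_0}$ is at best a full $5$-cycle, not a $5$-wheel with pendant triangle.

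This is fatal for the stated approach, because $5$-largeness of $Y_{v_0}$ forbids full $4$-cycles (so your argument for condition~(a) does go through) but not full $5$-cycles, and $5$-large does not imply $SD_2^{\ast}$ in general: the flag complex on seven vertices consisting solely of a $5$-wheel with a pendant triangle is $5$-large yet violates condition~(b), since no vertex is adjacent to all seven. Your ``secondary delicate point'' is therefore a genuine obstruction, not a detail to be filled in. The argument in \cite{O-sdn} must use more of the cubical/CAT($0$) structure than the bare link combinatorics you outline---for instance a more careful choice of which cube to take the link at, or a direct verification of the sphere-intersection condition of Definition~\ref{weaks} rather than passing through $SD_2^{\ast}$.
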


Assume that a group $H$ acts freely by automorphisms on a cubical complex $Y$. This action induces an $H$--action on the thickening $Th(Y)$.
The \emph{minimal displacement} of the action of $H$ on $Th(Y)$ (in other words: the \emph{injectivity radius} of the quotient $H\backslash Th(Y)$)
is the number $\min \lk \mr d(v,hv) \; | \; v\in (Th(Y))^{(0)}, \; h\in H, \; h\ne1 \rk$.
Combining Lemma~\ref{sd2th} and Proposition~\ref{p:thcat-1} we obtain the following fact that will be useful in Section~\ref{weak}
(in the proof of Theorem~\ref{indCox}).

\begin{prop}
\label{p:quot}
Let $H$ act freely by automorphisms on a simply connected locally $5$--large cubical complex $Y$ whose links satisfy the $SD_2^{\ast}$ property.
Assume that the minimal displacement for the induced $H$--action on $Th(Y)$ is at least $5$. Then the quotient $H\backslash Th(Y)$ is a $5$--large complex with $SD_2^{\ast}$ links.
\end{prop}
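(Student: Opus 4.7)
My strategy is to first upgrade Lemma~\ref{sd2th} to conclude that $Th(Y)$ itself is a (simply connected) complex with $SD_2^{\ast}$ links, and then to descend to the quotient using the minimal displacement hypothesis, which will give an isomorphism between combinatorial balls of radius $2$ in $Th(Y)$ and in $H\backslash Th(Y)$.

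First I would record that $Th(Y)$ is weakly systolic by Proposition~\ref{p:thcat-1}, hence satisfies $SD_2^{\ast}$ by Theorem~\ref{logl}; by Lemma~\ref{sd2th} its links also satisfy $SD_2^{\ast}$. So on the universal-cover side everything needed for the ``$SD_2^{\ast}$ links'' clause is already in place. Next I would exploit the fact that every simplex of $Th(Y)$ has diameter $\leqslant 1$ in the $1$--skeleton metric (all its vertices span an edge by flagness of the thickening). Combined with minimal displacement $\geqslant 5$, this means the action of every non-trivial $h\in H$ moves any simplex off itself, so the action is free on simplices and $H\backslash Th(Y)$ inherits a simplicial structure with quotient map $p\colon Th(Y)\to H\backslash Th(Y)$ a simplicial covering. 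Moreover, for every vertex $v$, the combinatorial ball $B_2(v)$ has diameter $\leqslant 4<5$; if two of its vertices were identified by some $h\neq1$, we would obtain $d(v,hv)\leqslant 4$, a contradiction. Thus $p$ restricts to a simplicial isomorphism from $B_2(v)$ onto $p(B_2(v))$. This local-isomorphism principle drives everything that follows.

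I would then transfer both clauses of the $SD_2^{\ast}$ property (absence of $4$--wheels, and a dominating vertex for every $5$--wheel with a pendant triangle) from $Th(Y)$ to the quotient, and likewise for every link of the quotient. The point is that each of the relevant subcomplexes, together with a hypothetical dominating vertex, is supported in a ball of radius $\leqslant 2$; the local isomorphism lets one lift such a configuration into $Th(Y)$ uniquely, where Step~1 applies. Hence $H\backslash Th(Y)$ is a complex with $SD_2^{\ast}$ links.

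It remains to verify $5$--largeness. Flagness of $H\backslash Th(Y)$ transfers locally: any pairwise-connected set of vertices in the quotient has diameter $1$ and lifts into a single ball $B_2(v)$, where flagness of $Th(Y)$ produces a spanning simplex. A full $j$--cycle with $j\in\{3,4\}$ in the quotient has diameter $\leqslant 2$, so by the same lift it would give a full $j$--cycle in $Th(Y)$. Full $3$--cycles are excluded by flagness of $Th(Y)$, and full $4$--cycles are excluded by a link-level analysis, using the local $5$--largeness of $Y$: given a putative full $4$--cycle $(v_1,v_2,v_3,v_4)$ in $Th(Y)$, I would pick for each $i$ a minimal cube $k_i\subseteq Y$ containing $\{v_i,v_{i+1}\}$, and then (exactly as in the proof of Lemma~\ref{sd2th}) work inside a link $Y_v$ of $Y$ to produce either a full $3$--cycle or a full $4$--cycle in $Y_v$, contradicting $5$--largeness of that link. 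This last step is the real obstacle; the rest is essentially bookkeeping around the ``$B_2$ maps isomorphically'' observation, but the link-level translation from failure-of-fullness in $Th(Y)$ to forbidden configurations in $Y_v$ is technical and follows the same pattern (with a $4$--cycle replacing the $5$--wheel with pendant triangle) as the Claim~1--Claim~2 argument in the proof of Lemma~\ref{sd2th}.
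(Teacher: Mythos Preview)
Your overall architecture matches the paper's: establish that $Th(Y)$ is weakly systolic (hence satisfies $SD_2^{\ast}$) with $SD_2^{\ast}$ links, and then descend using the displacement bound, which makes the covering map a local isomorphism on balls of radius~$2$. The transfer of both clauses of $SD_2^{\ast}$, for the quotient and for its links, is handled exactly as in the paper.

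The one problem is your treatment of $5$--largeness of $Th(Y)$. You propose to exclude full $4$--cycles in $Th(Y)$ by an argument ``inside a link $Y_v$ of $Y$'' patterned on Lemma~\ref{sd2th}. But that lemma operates in the link $Y_k$, where $k$ is the minimal cube containing a \emph{fixed} simplex $\sigma\subseteq Th(Y)$; when you are analysing $Th(Y)$ itself rather than one of its links there is no such $\sigma$, and hence no natural cube or vertex to anchor the argument. Concretely, for a putative full $4$--cycle $(v_1,v_2,v_3,v_4)$ in $Th(Y)$, only two of the four minimal cubes $k_i$ contain any given $v_j$, so no single link $Y_v$ of the cubical complex sees the whole configuration. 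As written, this step is a gap.

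It is also unnecessary. The paper simply observes that weakly systolic complexes are $5$--large, directly from Definition~\ref{weaks}: if $(v_1,v_2,v_3,v_4)$ were a full $4$--cycle then $v_3\in S_2(v_1)$, while $v_2,v_4$ would be two non-adjacent vertices of $X_{v_3}\cap B_1(v_1)$, contradicting the requirement that this intersection be a single simplex. With $Th(Y)$ thus $5$--large, $5$--largeness of the quotient follows from your displacement/lifting argument exactly as you describe.
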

\begin{proof}
By Proposition~\ref{p:thcat-1} the thickening $Th(Y)$ is weakly systolic. Weakly systolic complexes are $5$--large -- this follows easily from the definition (Definition~\ref{weaks}). Thus $Th(Y)$ is $5$--large and, by the assumptions on the injectivity radius, the quotient $H\backslash Th(Y)$ is $5$--large as well.
Since $H\backslash Th(Y)$ is locally isomorphic to $Th(Y)$, by Lemma~\ref{sd2th}, we obtain that links in the quotient satisfy the
$SD_2^{\ast}$ property.
Similarly for a $5$--wheel with a pendant triangle $\widehat W$ in $H\backslash Th(Y)$, by the local isomorphism and by the assumptions
on the injectivity radius one can find the corresponding $5$--wheel $\widehat W'$ with a pendant triangle in $Th(Y)$. Since $Th(Y)$ (as weakly systolic)
satisfies the $SD_2^{\ast}$ property (Theorem~\ref{logl}), $\widehat W'$ is contained in a ball of radius one. It implies that $\widehat W$ is contained in a corresponding ball of radius one.
This establishes condition (b) from Definition~\ref{sd2*} for $H\backslash Th(Y)$. As for condition (a), observe that (by local isomorphism) the existence of a $4$--wheel in $H\backslash Th(Y)$ would imply the same for $Th(Y)$.
In view of Theorem~\ref{logl}, this would contradict weak systolicity of $Th(Y)$.
\end{proof}

\subsection{Finiteness properties of groups}
\label{fp}
Recall -- see e.g.\ \cite[Chapter 7.2]{Geo} -- that a group $G$ is said to have \emph{type $F_n$} ($n\geqslant 1$) if there exists a $K(G,1)$ CW complex having finite $n$--skeleton. In particular, a group $G$ has type $F_1$ iff $G$ is finitely generated and $G$ has type $F_2$ iff it is finitely presented.
We say that $G$ has \emph{type $F_{\infty}$} if there exists a $K(G,1)$ complex whose all skeleta are finite. It is a standard fact -- see \cite[Proposition 7.2.2]{Geo} -- that a group $G$ has type $F_{\infty}$ iff it has type $F_n$ for every $n\geqslant 1$. In particular, every finite group has type $F_{\infty}$ -- see \cite[Corollary 7.2.5]{Geo}.

Let $G$ be a group acting by automorphisms on a path connected CW complex $X$, i.e.\ $X$ is a \emph{$G$--CW complex} -- see \cite[Chapter 3.2]{Geo}. A \emph{$G$--filtration} of $X$ is a countable collection
of $G$--subcomplexes $X_0\subseteq X_1 \subseteq \cdots$
such that $X=\bigcup_{i=0}^{\infty} X_i$. Let $X$ be $(n-1)$--connected. The $G$--filtration $\lk X_i \rk$ is called \emph{essentially $(n-1)$--connected} if for $0\leqslant k \leqslant n-1$
and for every $i$ there exists $j>i$ such that the map
$\pi_k(X_i,y)\to \pi_k(X_j,y)$
induced by the inclusion is trivial (here $y$ is a fixed point in $X_0$).

Recall the following Brown's criterion -- see \cite[Theorems 2.2 and 3.2]{Br} and \cite[Theorem 7.4.1 and the excercise after it]{Geo}.

\begin{tw}[Brown's Finiteness Criterion]
\label{BFC}
Let $X$ be an $(n-1)$--connected $G$--CW complex such that for every
$i$--cell $c$ of $X$ its $G$--stabilizer $G_c$ has type $F_{n-i}$.
Assume that $X$ admits a $G$--filtration $\lk X_i \rk$ where each $G\backslash X_i$ has finite $n$--skeleton. Then $G$ has type $F_n$ iff $\lk X_i \rk$ is essentially $(n-1)$--connected.
\end{tw}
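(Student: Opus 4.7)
The plan is to prove both directions of the equivalence using equivariant cellular/obstruction theory, with the stabilizer hypothesis $G_c$ of type $F_{n-i}$ serving as the bridge that converts finite quotient skeleta into genuine finiteness data for the group $G$.

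For the ``if'' direction, assume the filtration $\{X_i\}$ is essentially $(n-1)$-connected. I would build a $K(G,1)$ whose $n$-skeleton has finitely many $G$-orbits of cells, constructed inductively up the filtration. Start with a small $G$-invariant subcomplex of $X_0$ and replace each $G$-orbit of $i$-cells ($i\le n$) by a free $G$-model of a $K(G_c,1)$ truncated to dimension $n-i$; since $G_c$ has type $F_{n-i}$, each such orbit contributes only finitely many cells to the quotient, and the finite $n$-skeleton of $G\backslash X_i$ guarantees that the total number of orbits added in each dimension up to $n$ stays finite. Next, use the essentially $(n-1)$-connectedness: for each $i$ and each $k\le n-1$, any $\pi_k$-class in the partial model lifting a class in $X_i$ becomes trivial in some later $X_j$, so one can fill it by attaching a finite $G$-orbit of $(k+1)$-cells coming from the null-homotopy (again finitely many orbits by compactness and the finiteness of $G\backslash X_j$). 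Telescoping over $i$ yields a free, contractible $G$-CW complex whose quotient has finite $n$-skeleton, certifying $G$ of type $F_n$.

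For the ``only if'' direction, pick a $K(G,1)$ complex $L$ with finite $n$-skeleton and lift to the universal cover $\widetilde L$, a free contractible $G$-CW complex with finite $n$-skeleton modulo $G$. Since $X$ is $(n-1)$-connected and has stabilizers of type $F_{n-i}$, standard equivariant obstruction theory produces a $G$-map $f\colon \widetilde L^{(n-1)}\to X$: the obstruction to extending over $i$-cells lies in equivariant cohomology with coefficients in $\pi_{i-1}(X)=0$ for $i\le n$, so no obstruction arises. Given any sphere $S^k \to X_i$ with $k\le n-1$, precompose with a lift through $f$ (possible up to equivariant homotopy) and use contractibility of $X$ to extend over $B^{k+1}\to X$; since $\widetilde L^{(k+1)}/G$ is finite, the image of $B^{k+1}$ lies in some $X_j$, giving the required trivialization $\pi_k(X_i)\to\pi_k(X_j)$.

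The main obstacle will be the first direction: making the inductive ``replacement of orbits'' compatible with both the filtration and the attaching maps, while controlling that only finitely many $G$-orbits of cells are introduced in each dimension $\le n$. The key subtlety is that a single $G$-orbit of $i$-cells contributes dimensions up to $i+(n-i)=n$ worth of cells coming from the $F_{n-i}$ model of its stabilizer, so one must carefully budget the finiteness at each dimension and ensure that the choices of null-homotopies supplied by the essential $(n-1)$-connectedness can themselves be made $G$-equivariantly and finitely. Once this bookkeeping is in place, both directions follow from the standard machinery, as developed in \cite{Br} and \cite{Geo}.
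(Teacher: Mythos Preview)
The paper does not prove this statement; it is quoted as a known result with references to \cite[Theorems 2.2 and 3.2]{Br} and \cite[Theorem 7.4.1]{Geo}, so there is no proof in the paper to compare against. Your proposal is an attempt to reconstruct Brown's argument.

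Your ``if'' direction is a plausible outline of the Borel-construction/rebuilding approach, and you correctly identify the bookkeeping of cell orbits as the delicate point.

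Your ``only if'' direction, however, has a genuine gap. Given a map $S^k\to X_i$, there is no way to ``precompose with a lift through $f$'': the map $f\colon\widetilde L^{(n-1)}\to X$ points the wrong way, and an arbitrary sphere in $X_i$ need not factor through~$f$. Moreover, once you extend $S^k\to X_i$ to $B^{k+1}\to X$ using $(n-1)$--connectedness of $X$, the image lands in some $X_j$ purely by compactness of $B^{k+1}$; the finiteness of $\widetilde L^{(k+1)}/G$ plays no role in that step. The crux is that this $j$ depends on the particular sphere, whereas essential $(n-1)$--connectedness requires a single $j$ that works for \emph{every} map $S^k\to X_i$. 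Producing that uniform $j$ is precisely where the hypothesis that $G$ has type $F_n$ (together with the stabilizer hypothesis) must be used nontrivially, and as written your argument never uses it in an essential way. The actual proof (see \cite{Br} or \cite{Geo}) builds, for each $X_i$, a $G$--map from a suitable skeleton of $X_i$ into a cocompact free $G$--complex and back, using the stabilizer finiteness to handle the non-free orbits; cocompactness of the free model then yields the uniform bound on $j$.
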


The following fact is well known -- see e.g.\ \cite[Lemma 1]{FO}.

\begin{lem}[Simply connected Rips complex]
\label{scRc}
Let $G$ be a finitely presented group with a finite generating set $S$. There
exists a number $D$ such that for $d\geqslant D$ the Rips complex $P_d(G)$ (of $G$ with word metric with respect to $S$) is simply connected.
\end{lem}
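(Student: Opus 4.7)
The plan is to exploit finite presentability: fix a finite presentation $\langle S\mid R\rangle$ of $G$ and take any $D\geqslant \max_{r\in R}|r|$. I claim $P_D(G)$ is then simply connected. Path-connectedness is automatic, since $S$ generates $G$ and so the Cayley graph $\mathrm{Cay}(G,S)$ is connected and embeds in $P_d(G)^{(1)}$ for every $d\geqslant 1$.

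To handle $\pi_1$, I would first reduce to edge-loops in $\mathrm{Cay}(G,S)$. Given any edge $\{g,h\}$ of $P_D(G)$ with $d_S(g,h)\leqslant D$, pick a word-geodesic $g=v_0,v_1,\ldots,v_\ell=h$ in $\mathrm{Cay}(G,S)$; its vertices are pairwise within distance $\leqslant \ell\leqslant D$, so they span a (contractible) simplex of $P_D(G)$ inside which the edge $\{g,h\}$ is homotopic rel endpoints to the subdivided path $v_0,\ldots,v_\ell$. Performing this edge by edge turns an arbitrary edge-loop $\gamma$ of $P_D(G)^{(1)}$ into a homotopic Cayley-graph loop, which represents a word $w$ in $S^{\pm 1}$ equal to $1$ in $G$.

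The main step is then a van Kampen argument: by the choice of presentation, $w$ is a product of conjugates of relators, which translates into a combinatorial disk filling $\gamma$ whose $2$-cells are labelled by elements of $R$. Each relator cell is bounded by a Cayley-graph loop $v_0,v_1,\ldots,v_{k-1}$ with $k=|r|\leqslant D$, and any two of its vertices $v_i,v_j$ are at distance at most $\min(j-i,\,k-(j-i))\leqslant k/2\leqslant D$. Hence they jointly span a single simplex of $P_D(G)$, and the relator loop bounds a simplicial disk there (say, by coning off from $v_0$). Pasting these per-cell null-homotopies together along the combinatorial disk yields a null-homotopy of $\gamma$ inside $P_D(G)^{(2)}$.

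The only delicate point is the bookkeeping needed to coherently glue the per-edge subdivisions with the per-relator fillings, but this is a routine application of van Kampen's theorem; no deeper ingredient than finite presentability enters, and in fact the argument shows that for $d\geqslant\max_{r\in R}|r|$ one may even take the presentation $2$-complex to sit combinatorially inside $P_d(G)^{(2)}$.
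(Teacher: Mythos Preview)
Your argument is correct and is essentially the standard proof of this well-known fact. Note that the paper does not actually supply a proof of this lemma: it is stated as ``well known'' with a reference to \cite[Lemma~1]{FO}, so there is no in-paper argument to compare against. Your choice of constant $D\geqslant\max_{r\in R}|r|$ is sharp for this approach, and both reductions---replacing an arbitrary $P_D$-edge by a Cayley geodesic inside a single simplex, and filling each relator loop inside a single simplex via coning---go through exactly as you describe. The final remark about the presentation $2$-complex ``sitting inside'' $P_d(G)^{(2)}$ is better phrased as the existence of a natural $\pi_1$-surjective map from the Cayley $2$-complex to $P_d(G)$ (it need not be an embedding on $2$-cells), but this does not affect the proof.
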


Recall -- see e.g.\ \cite[Chapter VIII]{Bro} -- that the \emph{cohomological dimension} of a group $G$ is defined as
\begin{align*}
\mr{cd}\, G=\sup \lk i|\; H^i(G,M)\neq 0\; \mr{for}\; \mr{some}\; G\mr{-module}\; M \rk.
\end{align*}
If $\mr{cd}\, G<\infty$ then $G$ is torsion-free. The \emph{virtual cohomological dimension} of a virtually torsion-free group $G$, denoted \emph{$\mr{vcd}\, G$}, is defined as  $\mr{vcd}\, G=\mr{cd}\, H$, for some (and hence any) torsion-free finite index subgroup $H$ of $G$.

\subsection{Vietoris homology}
\label{vietpr}
We recall basic notions and facts related to the
concept of Vietoris homology,
which we use in Section \ref{bdry}.
Our main reference is the book \cite{Le}
by Solomon Lefschetz. If not stated otherwise, we use integers $\mathbb Z$ as coefficients.

Given a set $X$, let $V(X)$ denote the simplicial complex with vertex set $X$,
such that any finite subset of $X$ spans a simplex. Let $S(X)$ be the set of
all simplices of $V(X)$; we identify simplices of $S(X)$ with their vertex sets.
A {\it diameter function} in $X$ is any function $\nu:S(X)\to[0,\infty)$ such that:
\begin{enumerate}
\item
$\nu(\sigma)=0$ for all $0$--simplices $\sigma$, and
\item
if $\tau$ is a face
of $\sigma$ then $\nu(\tau)\leqslant \nu(\sigma)$.
\end{enumerate}

\noindent
{\bf Example.} If $(X,d)$ is a metric space, then the function
$\nu_d:S(X)\to[0,\infty)$ defined by
$\nu_d(\sigma):=\max\{ d(x,y):x,y\in\sigma \}$ is a diameter function in $X$.
We will call $\nu_d$ the diameter function {\it associated to the metric} $d$.

\medskip
Given a set $X$ with a diameter function $\nu$, and a number $\varepsilon>0$,
the {\it Vietoris complex} $V_\varepsilon(X,\nu)$ is the subcomplex of $V(X)$
consisting of all those $\sigma\in S(X)$ for which $\nu(\sigma)\leqslant\varepsilon$.

\medskip\noindent
{\bf Example.} If $\nu$ is the diameter function in $X$ associated to a metric $d$
then for any $\varepsilon\geqslant 0$ the Vietoris complex $V_\varepsilon(X,\nu)$
coincides with the Rips complex $P_\varepsilon(X,d)$.

\medskip
Denote by $Z_k(V_\varepsilon(X,\nu))$ the set of $k$--dimensional simplicial
cycles in the Vietoris complex $V_\varepsilon(X,\nu)$.

\begin{de}[$V$--cycle]
\label{v1}
A \emph{$k$--dimensional $V$--cycle in $(X,\nu)$} is a sequence
$(z_n)$ of simplicial $k$--dimensional cycles in $V(X)$ such that for some
sequence $\varepsilon_n\to0$ of positive numbers we have
\begin{enumerate}
\item $z_n\in Z_k(V_{\varepsilon_n}(X,\nu))$ for all $n$;
\item $z_n$ is homologous to $z_{n+1}$ in $V_{\varepsilon_n}(X,\nu)$,
for all $n$.
\end{enumerate}
\end{de}

The set of all $k$--dimensional $V$--cycles in $(X,\nu)$ clearly forms an abelian group
with respect to the coordinate-wise addition $(z_n)+(z'_n)=(z_n+z'_n)$.
We denote this group by $VZ_k(X,\nu)$.

\begin{de}[$V$--boundary]
\label{v2}
A $k$--dimensional $V$--cycle $(z_n)$ in $X$ is a \emph{$V$--boundary}
if there is a sequence $w_n$ of $(k+1)$--dimensional simplicial chains in $V(X)$ such that for some sequence
$\varepsilon_n\to0$ of positive numbers, $w_n$ is a chain in $V_{\varepsilon_n}(X,\nu)$
and $z_n=\partial w_n$.
\end{de}

The set of all $k$--dimensional boundaries clearly forms a subgroup in $VZ_k(X,\nu)$,
and we denote it $VB_k(X,\nu)$.

\begin{de}[Vietoris homology]
\label{v3}
\emph{Vietoris homology} of a space with diameter function $(X,\nu)$ is the
quotient group $VH_*(X,\nu):=VZ_*(X,\nu)/ VB_*(X,\nu)$. Vietoris homology
of a metric space $(X,d)$ is the Vietoris homology of $(X,\nu_d)$.
\end{de}

We recall from \cite{Le} some properties of Vietoris homology.
Two diameter functions $\nu,\nu':S(X)\to[0,\infty)$ are {\it equivalent}
if for any sequence $(\sigma_n)$ of simplices we have $\nu(\sigma_n)\to 0$ iff $\nu'(\sigma_n)\to0$.
Equivalently, $\nu$ and $\nu'$ are equivalent if $\forall\varepsilon$ $\exists\delta$
$\forall\sigma\in S(X)$ if $\nu(\sigma)<\delta$ then $\nu'(\sigma)<\varepsilon$
and if $\nu'(\sigma)<\delta$ then $\nu(\sigma)<\varepsilon$.

\begin{lem}[{\cite[statement (27.4) on p.\ 242]{Le}}]
\label{v4}
If $\nu,\nu'$ are equivalent
diameter functions in $X$ then $VH_*(X,\nu)=VH_*(X,\nu')$.
\end{lem}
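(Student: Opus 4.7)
The plan is to prove both $VZ_k(X,\nu)=VZ_k(X,\nu')$ and $VB_k(X,\nu)=VB_k(X,\nu')$ as subgroups of the common ambient group of sequences of simplicial $k$-chains in $V(X)$. The $V$-cycle and $V$-boundary conditions (Definitions~\ref{v1} and~\ref{v2}) both impose constraints of the form: ``there \emph{exists} a sequence $\varepsilon_n\to 0$ so that certain chains have $\nu$-diameter $\leqslant\varepsilon_n$ and the appropriate cycle/boundary relations hold in $V_{\varepsilon_n}(X,\nu)$.'' Since replacing $\nu$ by $\nu'$ changes only which witness sequence can be chosen, and not the underlying sequences of chains, the strategy is to keep each such sequence $(z_n)$ fixed and construct a new witness $\varepsilon'_n\to 0$ appropriate for $\nu'$.

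First I would rephrase the equivalence of $\nu$ and $\nu'$ in the useful form: for every $\varepsilon>0$ there exists $\delta>0$ such that $V_\delta(X,\nu)\subseteq V_\varepsilon(X,\nu')$, and symmetrically. Given a $V$-cycle $(z_n)$ in $(X,\nu)$ with witness $(\varepsilon_n)$ and with auxiliary $(k+1)$-chains $w_n$ satisfying $\partial w_n=z_{n+1}-z_n$ in $V_{\varepsilon_n}(X,\nu)$, I would set
\[
\varepsilon'_n:=\max_{\sigma\in\mathrm{supp}(z_n)\cup\mathrm{supp}(w_n)}\nu'(\sigma),
\]
which is a finite maximum because the chains have finite support. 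The key observation is that $\varepsilon'_n\to 0$: given $\eta>0$, choose the $\delta>0$ provided by the equivalence; then for all sufficiently large $n$ one has $\varepsilon_n<\delta$, so each simplex $\sigma\in\mathrm{supp}(z_n)\cup\mathrm{supp}(w_n)$ satisfies $\nu(\sigma)\leqslant\varepsilon_n<\delta$, forcing $\nu'(\sigma)<\eta$, and hence $\varepsilon'_n<\eta$. With this new witness $\varepsilon'_n$, conditions (1) and (2) of Definition~\ref{v1} for $\nu'$ are automatically satisfied by $(z_n)$, so $(z_n)\in VZ_k(X,\nu')$. The identical argument applied to the boundary-witnessing chains from Definition~\ref{v2} shows $VB_k(X,\nu)\subseteq VB_k(X,\nu')$.

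By the symmetric role of $\nu,\nu'$ in the equivalence hypothesis, the reverse inclusions also hold, giving $VZ_k(X,\nu)=VZ_k(X,\nu')$ and $VB_k(X,\nu)=VB_k(X,\nu')$ as subgroups. The desired equality $VH_*(X,\nu)=VH_*(X,\nu')$ then follows by passing to quotients. I do not foresee a real obstacle: the only conceptual point is upgrading the pointwise equivalence (a single quantifier over individual simplices) to uniform control over the finitely many simplices appearing in each chain $z_n$ or $w_n$, and this is immediate from finiteness of support.
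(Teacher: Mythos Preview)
Your argument is correct. The paper itself does not supply a proof of this lemma: it is stated with a citation to Lefschetz \cite{Le}, so there is no ``paper's own proof'' to compare against. Your direct approach---fixing the sequence $(z_n)$ and manufacturing a new witness sequence $(\varepsilon'_n)$ from the $\nu'$-diameters of the finitely many simplices in $\mathrm{supp}(z_n)\cup\mathrm{supp}(w_n)$---is exactly the natural way to do this, and the key point (that finiteness of support converts the simplex-by-simplex equivalence into uniform control) is identified correctly.

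Two cosmetic remarks. First, Definition~\ref{v1} asks for $\varepsilon'_n>0$, while your maximum could in principle be zero (e.g.\ if all simplices in the support happen to have $\nu'$-diameter zero); replacing $\varepsilon'_n$ by $\max(\varepsilon'_n,1/n)$ fixes this harmlessly. Second, to verify condition~(2) you need $z_{n+1}$ to lie in $V_{\varepsilon'_n}(X,\nu')$ as well; this follows because $\mathrm{supp}(z_{n+1})\subseteq \mathrm{supp}(z_n)\cup\mathrm{supp}(\partial w_n)$ and faces have no larger diameter than their parent simplices, but it is worth saying explicitly.
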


\begin{cor}
\label{v5}
Let $X$ be a compact metric space and let $d_1,d_2$ be any two metrics
on $X$ compatible with the topology. Then the diameter functions $\nu_{d_1},\nu_{d_2}$
are equivalent. Consequently, we have $VH_*(X,\nu_{d_1})=VH_*(X,\nu_{d_2})$, and thus
the Vietoris homology is a topological invariant of a compact metric space.
\end{cor}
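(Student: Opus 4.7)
\emph{Plan.} The corollary has two parts: equivalence of the diameter functions $\nu_{d_1},\nu_{d_2}$, and the topological invariance of Vietoris homology on compact metric spaces. The first part carries the essential content; the second follows formally by combining the first part with Lemma~\ref{v4} and a standard pullback argument. I expect no serious obstacle, since the whole assertion reduces to upgrading a topological equivalence of metrics to a uniform one by compactness, which is entirely routine.

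First I would prove the equivalence of the two diameter functions in the sense introduced just before Lemma~\ref{v4}. The key observation is that the identity map $\mathrm{id}\colon (X,d_1)\to (X,d_2)$ is continuous between compact Hausdorff metric spaces, hence uniformly continuous. Thus for every $\varepsilon>0$ there exists $\delta>0$ such that $d_1(x,y)<\delta$ implies $d_2(x,y)<\varepsilon$ for all $x,y\in X$. Since every $\sigma\in S(X)$ is a finite subset of $X$ and $\nu_d(\sigma)=\max\{d(x,y):x,y\in\sigma\}$ is attained at a single pair of vertices, the inequality $\nu_{d_1}(\sigma)<\delta$ immediately forces $\nu_{d_2}(\sigma)<\varepsilon$. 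Swapping the roles of $d_1$ and $d_2$ (and reusing the uniform continuity of $\mathrm{id}\colon(X,d_2)\to(X,d_1)$) gives the reverse implication, which is precisely the equivalence condition preceding Lemma~\ref{v4}.

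Once the equivalence is established, Lemma~\ref{v4} delivers the equality $VH_*(X,\nu_{d_1})=VH_*(X,\nu_{d_2})$, which shows that $VH_*(X,\nu_d)$ depends on $X$ only through the topology, not through the particular choice of compatible metric $d$. For the full topological invariance statement, given a homeomorphism $h\colon (X,d_X)\to (Y,d_Y)$ between compact metric spaces, I would pull back $d_Y$ via $h$ to obtain a metric $d'(x_1,x_2):=d_Y(h(x_1),h(x_2))$ on $X$ that is compatible with the topology of $X$; the map $h$ becomes an isometry $(X,d')\to(Y,d_Y)$, inducing a simplicial isomorphism of all Vietoris complexes and hence an isomorphism $VH_*(X,\nu_{d'})\cong VH_*(Y,\nu_{d_Y})$. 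Combining with the equivalence $VH_*(X,\nu_{d_X})=VH_*(X,\nu_{d'})$ just proved yields $VH_*(X,\nu_{d_X})\cong VH_*(Y,\nu_{d_Y})$, completing the proof.
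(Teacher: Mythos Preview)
Your proposal is correct. The paper states this corollary without proof, treating it as an immediate consequence of Lemma~\ref{v4}; your argument via uniform continuity of the identity between compact metric spaces is exactly the standard way to fill in the missing step, and your pullback argument for homeomorphism invariance is the natural completion.
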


The next result identifies Vietoris homology with other homology theories.

\begin{tw}[{\cite[Theorem (26.1) on p.\ 273]{Le}}]
\label{v6}
For any compact metric space $X$ its Vietoris homology groups coincide with
\v Cech homology groups. In particular, if $X$ is a compact CW complex then its
Vietoris homology coincides with its singular homology.
\end{tw}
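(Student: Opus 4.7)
The plan is to identify both Vietoris and \v Cech homology as inverse limits of homologies of finite simplicial complexes, and then to compare these inverse systems directly via a cofinality argument exploiting Lebesgue numbers on the compact metric space $X$.

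First, I would reinterpret the definition of $VH_*(X,\nu_d)$. For each $\varepsilon>0$ there is an ordinary simplicial homology group $H_*(V_\varepsilon(X,\nu_d))$, and whenever $\varepsilon'<\varepsilon$ the inclusion $V_{\varepsilon'}\hookrightarrow V_\varepsilon$ induces a map between them. I claim that the group of $V$-cycles modulo $V$-boundaries given by Definitions \ref{v1}--\ref{v3} is naturally the inverse limit $\varprojlim_{\varepsilon\to 0}H_*(V_\varepsilon(X,\nu_d))$; the $\varprojlim^{1}$ term vanishes because $X$ is compact (this is Lefschetz's ``continuity'' theorem for Vietoris homology). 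On the \v Cech side, I would use the standard description $\check H_*(X)=\varprojlim_{\mathcal U} H_*(N(\mathcal U))$, where $\mathcal U$ ranges over finite open covers of $X$ directed by refinement and $N(\mathcal U)$ is the nerve.

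Next comes the cofinality step, which is the heart of the proof. Given any finite open cover $\mathcal U$ of $X$, compactness yields a Lebesgue number $\lambda(\mathcal U)>0$. For any $\varepsilon<\lambda(\mathcal U)$, choose for each $x\in X$ some $U(x)\in\mathcal U$ containing the closed $\varepsilon$-ball around $x$; this assignment extends to a simplicial map $V_\varepsilon(X,\nu_d)\to N(\mathcal U)$ because any simplex $\sigma$ of $V_\varepsilon(X,\nu_d)$ has diameter $\le\varepsilon$ and hence all its vertices lie in some common $U\in\mathcal U$, making $\{U(x):x\in\sigma\}$ a simplex of $N(\mathcal U)$. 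Conversely, given $\varepsilon>0$, take a finite open cover $\mathcal U_\varepsilon$ by balls of radius $\varepsilon/3$ and pick a point $p_U\in U$ for each $U\in\mathcal U_\varepsilon$; sending $U\mapsto p_U$ is simplicial from $N(\mathcal U_\varepsilon)$ into $V_\varepsilon(X,\nu_d)$. One checks that these two families of maps are compatible up to simplicial contiguity with the inverse-system maps, so they induce mutually inverse maps on the inverse limits. This yields $VH_*(X,\nu_d)\cong\check H_*(X)$.

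Finally, for the ``in particular'' clause, a compact CW complex $X$ is a compact ANR, and for compact ANR's (in particular for finite CW complexes) \v Cech homology is well-known to agree with singular homology; one way to see this is to refine the open covers to good covers whose nerves are homotopy equivalent to $X$. I expect the main obstacle to be the careful handling of the inverse limit, specifically the vanishing of $\varprojlim^{1}$ so that the cycle/boundary description of Definitions \ref{v1}--\ref{v3} genuinely matches $\varprojlim_\varepsilon H_*(V_\varepsilon)$, and the verification that the comparison maps constructed above commute up to the relevant simplicial homotopies with inclusions $V_{\varepsilon'}\hookrightarrow V_\varepsilon$ and refinement maps $N(\mathcal U')\to N(\mathcal U)$. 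In practice this is exactly the content of Lefschetz's Chapter~VIII, which is why the statement is invoked as a black box here.
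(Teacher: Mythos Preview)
The paper does not prove this theorem at all: it is quoted as a black box from Lefschetz \cite{Le}, with no argument supplied. So there is no ``paper's proof'' against which to compare your sketch.

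That said, your outline is essentially the classical argument one finds in Lefschetz (and, in more modern dress, in Eilenberg--Steenrod or Spanier): realise both Vietoris and \v Cech homology as inverse limits over cofinal directed systems (Vietoris parameters $\varepsilon\to0$ on one side, finite open covers ordered by refinement on the other), and compare the two systems via the Lebesgue-number maps $V_\varepsilon\to N(\mathcal U)$ and the point-selection maps $N(\mathcal U_\varepsilon)\to V_\varepsilon$ that you describe. These are contiguous to the structural maps and hence induce mutually inverse isomorphisms on the limits; the ``in particular'' clause then follows from the standard fact that \v Cech and singular homology agree on compact ANR's.

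One correction worth flagging: your assertion that ``$\varprojlim^{1}$ vanishes because $X$ is compact'' is both incorrect and unnecessary. It is incorrect because compactness alone does not force $\varprojlim^{1}$ of the tower $\{H_*(V_\varepsilon)\}$ to vanish --- this already fails for solenoids with integer coefficients, which is exactly why \v Cech homology over $\mathbb Z$ is not an exact theory. It is unnecessary because the description of $VH_*$ via $V$--cycles and $V$--boundaries in Definitions~\ref{v1}--\ref{v3} \emph{is} the inverse limit $\varprojlim_\varepsilon H_*(V_\varepsilon)$ directly: a $V$--cycle is precisely a compatible sequence of homology classes along a cofinal sequence $\varepsilon_n\to0$, and it is a $V$--boundary iff each of those classes is zero. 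No derived-limit term enters the picture.
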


We finish this section with an observation which will be used
to justify Claim 1 in the proof of
Proposition \ref{d4}. This observation is a straightforward consequence of the definition
of the $V$--boundary.

\begin{fact}
\label{v7}
Suppose that $(z_n)$ is a $V$--cycle which is homologically
nontrivial in $VH_*(X,\nu)$. Then $\exists a>0$ and $\exists n_0\in N$ such
that $\forall n\geqslant n_0$ the cycle $z_n$ is homologically nontrivial in $V_a(X,\nu)$
(i.e.\ induces a nontrivial element in the ordinary simplicial homology
of the simplicial complex $V_a(X,\nu)$).
\end{fact}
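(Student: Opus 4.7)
The plan is to prove the contrapositive: assuming that for every $a>0$ and every $n_0$ there is some $n\geqslant n_0$ with $z_n$ a boundary in $V_a(X,\nu)$, I will construct chains witnessing that $(z_n)$ is a $V$-boundary, contradicting its nontriviality in $VH_*(X,\nu)$. Applying the hypothesis with $a=1/m$ and $n_0=m$ yields, by recursion, a strictly increasing sequence of indices $n_1<n_2<\cdots$ together with $(k{+}1)$-chains $b_m$ in $V_{1/m}(X,\nu)$ satisfying $\partial b_m=z_{n_m}$. By replacing $\varepsilon_n$ with $\sup_{k\geqslant n}\varepsilon_k$ (which still tends to $0$) we may assume the sequence from Definition \ref{v1} is non-increasing; the $V$-cycle property then supplies chains $u_j\in V_{\varepsilon_j}(X,\nu)$ with $\partial u_j=z_{j+1}-z_j$.

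For each $n$, let $m(n)$ be the least index with $n_{m(n)}\geqslant n$. Since $n_m\to\infty$ we have $m(n)\to\infty$. Set
\[
w_n := b_{m(n)} - \sum_{j=n}^{n_{m(n)}-1} u_j.
\]
Telescoping gives $\partial w_n = z_{n_{m(n)}} - (z_{n_{m(n)}}-z_n) = z_n$, and every simplex appearing in $w_n$ has $\nu$-diameter bounded by $\delta_n:=\max(\varepsilon_n, 1/m(n))$, because each $u_j$ lies in $V_{\varepsilon_j}(X,\nu)\subseteq V_{\varepsilon_n}(X,\nu)$ by monotonicity, while $b_{m(n)}\in V_{1/m(n)}(X,\nu)$. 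Since $\delta_n\to 0$, the sequence $(w_n)$ exhibits $(z_n)$ as a $V$-boundary in the sense of Definition \ref{v2}, producing the desired contradiction.

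The entire argument is formal manipulation of simplicial chains; the only point that requires any care is the bookkeeping that glues the subsequentially-available bounding chains $b_m$ to the telescoping $u_j$-chains in such a way that each $z_n$ acquires a bounding chain whose diameter shrinks to zero as $n\to\infty$. No topological input beyond the definitions of $V$-cycle and $V$-boundary is required, which is why the paper labels the observation a \emph{straightforward} consequence of Definition \ref{v2}.
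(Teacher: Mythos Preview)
Your argument is correct and is precisely the natural elaboration of what the paper declines to spell out: the text gives no proof, merely remarking that Fact~\ref{v7} ``is a straightforward consequence of the definition of the $V$--boundary.'' Your contrapositive construction---extracting a subsequence $n_m$ along which $z_{n_m}$ bounds in $V_{1/m}$, then telescoping via the homologies $u_j$ supplied by the $V$--cycle condition to produce bounding chains $w_n$ for every $z_n$ with $\nu$--diameters tending to zero---is exactly the intended straightforward argument. One cosmetic remark: when you write ``applying the hypothesis with $a=1/m$ and $n_0=m$ yields, by recursion, a strictly increasing sequence,'' the literal choice $n_0=m$ only gives $n_m\geqslant m$; to obtain strict monotonicity you should take $n_0=\max(m,\,n_{m-1}+1)$ at each step, as your phrase ``by recursion'' implicitly acknowledges. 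This does not affect the validity of the proof, since $n_m\geqslant m$ already forces $m(n)\to\infty$.
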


\section{A criterion for AHA}
\label{scrit}

Before formulating a criterion for AHA, we need to discuss assumptions
that we put on metric spaces to which the criterion applies.
We will deal with simply connected geodesic metric spaces $X$
for which the filling radius function has finite values for all arguments.
More precisely, given a piecewise geodesic loop $f\colon S^1\to X$ and its
continuous extension $F\colon D^2\to X$, the \emph{filling radius} of $F$, denoted
fill-rad$(F)$, is the maximal distance of a point in $F(D^2)$ from $f(S^1)$.
Now, we define fill-rad$(f)$ to be the infimum over all $F$ as above
of the values fill-rad$(F)$. Finally, given a real number $r$, we put
fill-rad$(r)$ to be the supremum over all loops $f$ as above
with length $\leqslant r$ of the values fill-rad$(f)$. This gives us a function
$\hbox{fill-rad}\colon R_+\to R\cup\{\infty\}$, which we call the \emph{filling radius
function} for $X$.

\medskip
\rem The condition that fill-rad$(r)<\infty$ for each $r>0$
is automatically satisfied by simply connected metric complexes $X$ acted upon geometrically
by a group of combinatorial automorphisms. Indeed, 
this is easily implied by the following two observations:
\begin{itemize}
\item for each finite subcomplex $K$ of $X$, there is another finite subcomplex
$L_K$ containing $K$ such that the map
$\pi_1K\to\pi_1L_K$ induced by the inclusion is trivial;
\item loops
of bounded length are contained in finitely many (up to the group action) finite simply connected subcomplexes.
\end{itemize}

\medskip
Given a metric space $(X,d)$, a subset $A\subseteq X$ and a real number $c>0$,
put $N_c(A)=\{ x\in X|\; d(x,A)\leqslant c \}$.
A criterion for AHA is stated in the following.

\begin{prop}
\label{crit}
Let $X$ be a simply connected geodesic metric space
with fill-rad$(r)<\infty$ for each $r>0$. Suppose also that to each subset
$A\subseteq X$ there is assigned a subspace $Y_A$ of $X$ such that:
\begin{itemize}
\item $A\subseteq Y_A$ and $Y_A\subseteq N_D(A)$ for some universal
$D$ independent of $A$;
\item $Y_A$ is aspherical;
\item if $A_1\subseteq A_2$ then $Y_{A_1}\subseteq Y_{A_2}$.
\end{itemize}
Then $X$ is AHA.
\end{prop}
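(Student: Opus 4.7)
Fix $r > 0$, a subset $A \subseteq X$, an integer $i \geq 2$, and a simplicial map $f \colon S \to P_r(A)$ from a triangulation $S$ of $S^i$. I want to produce $R = R(r)$ (uniform in $A$ and $i$) and a simplicial extension $F \colon B \to P_R(A)$ of $f$. My plan is to geometrically realize $f$ as a continuous map $\hat f \colon |S| \to X$ whose image lies in a universal neighborhood of $A$; apply monotonicity to the enlarged set $A' := A \cup \hat f(|S|)$ so that $\hat f(|S|) \subseteq Y_{A'}$ and $Y_{A'} \subseteq N_{D+c_1}(A)$ for some constant $c_1 = c_1(r)$; invoke asphericity of $Y_{A'}$ to produce a continuous filling $\hat F \colon B^{i+1} \to Y_{A'}$ of $\hat f$; and finally approximate $\hat F$ by a simplicial map and project its vertices back to $A$ at bounded cost.

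\textbf{Execution.} To build $\hat f$, I would proceed inductively on skeleta: $1$-simplices go to geodesics in $X$, $2$-simplices to continuous fillings of triangular loops of length $\leq 3r$ (provided by the finite filling radius hypothesis), and higher simplices by cones on previously built fillings; this would yield $\hat f(|S|) \subseteq N_{c_1}(A)$ for a constant $c_1$ depending on $r$ and on fill-rad. The monotonicity and neighborhood hypotheses give $Y_A \subseteq Y_{A'}$ and $Y_{A'} \subseteq N_{D+c_1}(A)$, and because $\hat f(|S|) \subseteq A' \subseteq Y_{A'}$, the asphericity of $Y_{A'}$ extends $\hat f$ to a continuous $\hat F \colon B^{i+1} \to Y_{A'}$. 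By compactness of $B^{i+1}$ and uniform continuity of $\hat F$, I would pick a triangulation $B$ of $B^{i+1}$ that restricts to $S$ on $\partial B$ and is fine enough that $\hat F$ maps each simplex into a set of diameter $\leq r$. For each interior vertex $v$ of $B$, I would choose $a(v) \in A$ with $d(\hat F(v), a(v)) \leq D + c_1$ (possible since $\hat F(v) \in Y_{A'} \subseteq N_{D+c_1}(A)$), and on $\partial B$ set $a(v) := f(v)$. For any two vertices $v, w$ sharing a simplex of $B$ one gets
\[
d(a(v), a(w)) \leq 2(D + c_1) + r =: R,
\]
so $F(v) := a(v)$ defines the desired simplicial extension $F \colon B \to P_R(A)$, with $R$ depending only on $r$.

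\textbf{Main obstacle.} The hard part will be making the geometric realization diameter constant $c_1$ uniform in the sphere dimension $i$: a crude cone-on-boundary realization yields $c_1$ growing with $i$, which would spoil the required uniformity of $R$ in the AHA definition. A resolution is to realize $\hat f$ only up to dimension $2$ using the filling radius of $X$ and to invoke asphericity of $Y_{A'}$ combinatorially above that; the topological reading of ``aspherical'' ($\pi_j(Y_{A'}) = 0$ for $j \geq 2$) will be strong enough, since no \emph{a priori} diameter bound on the continuous filling $\hat F$ is needed --- the mesh-refinement step absorbs any finite image diameter into the additive $r$ inside $R$. Monotonicity of $A \mapsto Y_A$ will also be essential, letting one absorb the auxiliary realization image $\hat f(|S|)$ into the set controlling the filling without losing the uniform bound $Y_{A'} \subseteq N_{D+c_1}(A)$.
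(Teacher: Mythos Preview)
Your overall strategy matches the paper's: realize $f$ geometrically using geodesics and the filling radius through dimension $2$, invoke asphericity of the sets $Y_\bullet$ to extend, then retriangulate and project vertices back to $A$. The key difficulty you correctly identify --- that a naive extension makes $c_1$ grow with $i$ --- is exactly the crux of the argument. However, your proposed resolution introduces a new gap.

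You use a single global set $Y_{A'}$ with $A' = A \cup \hat f(|S^{(2)}|)$ to extend $\hat f$ over all higher-dimensional simplices of $S$ at once. This does give $\hat f(|S|) \subseteq Y_{A'} \subseteq N_{D+c_1}(A)$, but it gives \emph{no local control}: for a simplex $\sigma$ of $S$ with $\dim\sigma \geq 3$, the image $\hat f(\sigma)$ may wander throughout $Y_{A'}$ and have diameter comparable to $\operatorname{diam}(A)$. Consequently, when you then ask for a triangulation $B$ of $B^{i+1}$ that \emph{restricts to $S$ on the boundary} and has $\hat F$-images of all simplices of diameter $\leq r$, this is impossible --- the simplices of $S$ are already fixed, and their $\hat f$-images are large. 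Your sentence ``the mesh-refinement step absorbs any finite image diameter into the additive $r$'' is where the argument breaks: mesh refinement can only shrink interior simplices, not the boundary ones imposed by the condition $\partial B = S$ in the AHA definition.

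The paper resolves this by extending $\bar f$ \emph{simplex by simplex}, using for each $\rho$ the local set $Y_{\bar f(\rho^{(2)})}$ rather than a global $Y_{A'}$. Monotonicity ($Y_{\bar f(\sigma^{(2)})} \subseteq Y_{\bar f(\rho^{(2)})}$ for faces $\sigma \subset \rho$) is what makes the inductive extension coherent, and the containment $Y_{\bar f(\rho^{(2)})} \subseteq N_D(\bar f(\rho^{(2)})) \subseteq N_C(f(\rho^{(0)}))$ gives the crucial local bound $\bar f(\rho) \subseteq N_C(f(\rho^{(0)}))$ with $C$ independent of $\dim\rho$. This simplex-wise control is precisely what lets the refinement of $B^{i+1}$ keep $\partial B = S$: the simplices of $S$ already have $\bar f$-images of diameter $\leq r + 2C$, so only the interior needs refining. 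Your use of monotonicity (to pass from $Y_A$ to $Y_{A'}$) is legitimate but is not the use that carries the proof.
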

\begin{proof}
Fix any subset $A\subseteq X$, a real number $r>0$ and a simplicial map
$f\colon S\to P_r(A)$ for some triangulation $S$ of a $k$--sphere, with $k\geqslant 2$.
View $f$ as the map from the vertex set $S^{(0)}$ to $A$.

\medskip
\noindent
{\bf Claim 1.} There exists a continuous map $\bar f\colon S\to X$ and its continuous
extension $\bar F\colon D^{k+1}\to X$ such that:
\begin{enumerate}
\item $\bar f(v)=f(v)$
for any vertex $v\in S^{(0)}$,
\item for any simplex $\sigma$ of $S$ it holds
$\bar f(\sigma)\subseteq N_C(f(\sigma^{(0)}))$, and
\item $\bar F(D^{k+1})\subseteq N_C(A)$,
\end{enumerate}
where $C=r/2+\hbox{fill-rad}(3r)+D$ and $\sigma^{(0)}$ denotes the vertex set
of $\sigma$.

\medskip
To get the claim, we turn to constructing $\bar f$, by successive extensions
over skeleta of $S$. Condition (1) determines $\bar f$ on the $0$--skeleton $S^{(0)}$.
Extend $\bar f$ to the $1$--skeleton $S^{(1)}$ by connecting the images of adjacent
vertices with geodesic segments. These segments have lengths $\leqslant r$, so we get
$\bar f(\sigma^{(1)})\subseteq N_{r/2}(f(\sigma^{(0)}))$ for every simplex $\sigma$
of $S$. Next, we extend $\bar f$ to the $2$--skeleton $S^{(2)}$ as follows. For each
$2$--simplex $\tau$ of $S$, the image $\bar f(\partial\tau)$ is a geodesic loop
of length $\leqslant 3r$. Thus we may (and do) extend $\bar f|_{\partial\tau}$ over $\tau$
so that $\bar f(\tau)\subseteq N_{{\rm fill-rad}(3r)}(\bar f(\partial\tau))
\subseteq N_{r/2+{\rm fill-rad}(3r)}(f(\tau^{(0)}))$.

Further extensions require referring to the assumptions
concerning aspherical subsets $Y_A$. Put $c=r/2+\hbox{fill-rad(3r)}$.
Given a $3$--simplex $\rho$, consider the set $Y_{\bar f(\partial\rho)}$.
Since this set is aspherical, we may (and do) extend $\bar f$ over $\rho$
so that $\bar f(\rho)\subseteq Y_{\bar f(\partial\rho)}\subseteq N_{c+D}(f(\rho^{(0)}))
=N_C(f(\rho^{(0)}))$. If $\rho$ is a simplex of dimension higher than 3,
and $\bar f$ is already defined on $\partial\rho$, consider the set
$Y_{\rho^{(2)}}$. Since for codimension one faces $\sigma$ of $\rho$
we have the inclusions (the first of which is a part of inductive hypothesis)
$\bar f(\sigma)\subseteq Y_{\bar f(\sigma^{(2)})}\subseteq Y_{\bar f(\rho^{(2)})}$, we may
extend $\bar f$ over $\rho$ so that
$\bar f(\rho)\subseteq Y_{\bar f(\rho^{(2)})}\subseteq N_C(f(\rho^{(0)}))$.
This gives us $\bar f$ as required.

To get $\bar F$, consider the set $Y_{\bar f(S^{(2)})}$ and note that
$\bar f(S)\subseteq Y_{\bar f(S^{(2)})}$, by the third condition in the assumptions
of the proposition. By asphericity of the latter set
there exists an extension $\bar F\colon D^{k+1}\to Y_{\bar f(S^{(2)})}$.
Since $Y_{\bar f(S^{(2)})}\subseteq N_D(\bar f(S^{(2)}))\subseteq N_C(f(S^{(0)}))$,
Claim 1 follows.

Now, observe that by the above construction the image under $\bar f$
of any simplex of $S$ has diameter $\leqslant r+2C$. Thus, for $K=r+2C+1=2(r+\hbox{fill-rad}(3r)+D)+1$
we have the following.

\medskip\noindent
{\bf Claim 2.} There exists a triangulation $B$ of the disk $D^{k+1}$,
which coincides with the triangulation $S$ when restricted to $S=\partial D^{k+1}$,
such that the image of each simplex of $B$ under $\bar F$ has diameter $\leqslant K$.

\medskip
We will now use a triangulation $B$ as in Claim 2, and the map $\bar F$, to describe
a map $F\colon B\to P_R(A)$ as required in the definition of AHA. We will specify the
(universal) value of $R$ at the end of this description.

It is sufficient to describe values of $F$ at vertices of $B$. For vertices
$v$ of $S$ we put $F(v)=f(v)$.
For vertices $v$ in $B$ and not in $S$ choose $F(v)$ to be an element of $A$
at distance at most $C$ from $\bar F(v)$. Clearly, since
$\bar F(D^{k+1})\subseteq N_C(f(S^{(0)}))$ and $f(S^{(0)})\subseteq A$, such choices
are possible. It is also clear that, after such choices, distances between
images of adjacent vertices of $B$ are bounded by $K+2C=3r+4(\hbox{fill-rad}(3r)+D)+1$.
Thus, putting $R=3r+4(\hbox{fill-rad}(3r)+D)+1$, we get the simplicial map
$F\colon B\to P_R(A)$ as required, and the proposition follows.
\end{proof}

\medskip
In view of  
Proposition \ref{propAHA} and
the remark right before  Proposition \ref{crit},
the latter proposition has the following two corollaries.

\begin{cor}
\label{ccrit}
Let $G$ be a group acting {properly by isometries}
on a simply connected geodesic metric space $X$ with fill-rad$(r)<\infty$, for each $r>0$.
Suppose also that to each subset
$A\subseteq X$ there is assigned a subspace $Y_A$ of $X$ such that:

\begin{itemize}
\item $A\subseteq Y_A$ and $Y_A\subseteq N_D(A)$ for some universal
real number $D$ independent of $A$;
\item $Y_A$ is aspherical;
\item if $A_1\subseteq A_2$ then $Y_{A_1}\subseteq Y_{A_2}$.
\end{itemize}
Then $G$ is AHA.
\end{cor}

\medskip
We say that a group acts on a geodesic metric space {\it geometrically}
if it acts by isometries, properly discontinuously and cocompactly.
A geometric action is automatically proper.

\begin{cor}
\label{ccrit+}
Let $G$ be a group acting geometrically, by combinatorial automorphisms,
on a simply connected geodesic metric complex $X$.
Suppose also that to each subset
$A\subseteq X$ there is assigned a subcomplex $Y_A$ of $X$ such that:

\begin{itemize}
\item $A\subseteq Y_A$ and $Y_A\subseteq N_D(A)$ for some universal
real number $D$ independent of $A$;
\item $Y_A$ is aspherical;
\item if $A_1\subseteq A_2$ then $Y_{A_1}\subseteq Y_{A_2}$.
\end{itemize}
Then $G$ is AHA.
\end{cor}

\section{Groups acting on hereditarily aspherical $2$--complexes}
\label{twocpl}

\medskip
A celebrated Whitehead's conjecture asserts that each subcomplex of an aspherical
$2$--dimensional complex is aspherical. Up to date, this conjecture remains not proved
(or disproved). We will call each aspherical $2$--complex satisfying this conjecture
{\it hereditarily aspherical}.

\begin{lem}
\label{asf2}
Let $X$ be a
simply connected
hereditarily aspherical geodesic metric $2$--complex $X$ with fill-rad$(r)<\infty$, for each $r>0$, and whose cells have uniformly bounded diameter. Let $G$ be a group acting properly by isometries on $X$.
Then $X$ and, consequently, $G$ are AHA.
In particular, any group acting {geometrically} on a simply connected
hereditarily aspherical $2$--complex $X$ is AHA.
\end{lem}
\begin{proof}
In view of Proposition~\ref{crit}, and Corollaries~\ref{ccrit} and \ref{ccrit+}, the lemma follows by taking $Y_A$
to be the smallest subcomplex of $X$ containing $A$.
\end{proof}

\begin{cor}
\label{cat02}
Groups acting {properly by isometries} on CAT(0) $2$--complexes
with uniformly bounded diameters of cells are AHA.
\end{cor}
\begin{proof}
{Observe that for CAT(0) complexes fill-rad$(r)<\infty$, for each $r>0$.}
Since every subcomplex of a $2$--dimensional CAT(0) complex
is nonpositively curved, it follows that it is aspherical. Thus,
CAT(0) $2$--complexes are hereditarily aspherical. Let $X$ be such a complex,
satisfying the assumptions of the corollary. By uniform boundedness
of cells, if we associate to any subset $A\subset X$ 
the smallest subcomplex $Y_A$ of $X$ containing $A$,
then the conditions of Corollary \ref{ccrit} for $Y_A$ are satisfied.
The assertion follows then
by {Corollary~\ref{ccrit}}.
\end{proof}
\medskip

Next result is a useful special case of Lemma \ref{asf2}.

\begin{lem}
\label{prescom}
The fundamental group $G$ of any finite hereditarily aspherical
complex $K$ is AHA. In particular,
if the presentation complex of a finitely presented group $G$
(for some finite presentation) is hereditarily aspherical then $G$ is AHA.
\end{lem}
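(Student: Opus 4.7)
The plan is to apply Lemma~\ref{asf2} to the universal cover $\widetilde K$ of $K$, equipped with the geometric deck transformation action of $G=\pi_1(K)$. Since $K$ is finite, this action is free, properly discontinuous, and cocompact, and $\widetilde K$ is a simply connected $2$--dimensional cell complex. By Lemma~\ref{asf2}, the conclusion will follow once I verify that $\widetilde K$ is itself hereditarily aspherical.

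To check this, let $L\subseteq\widetilde K$ be any subcomplex and let $p\colon\widetilde K\to K$ denote the covering projection. Its image $p(L)$ is a subcomplex of $K$, hence aspherical by hypothesis; in fact $p(L)$ is itself hereditarily aspherical, because every subcomplex of $p(L)$ is also a subcomplex of $K$. The restriction $p|_L\colon L\to p(L)$ is a combinatorial immersion of $2$-complexes --- locally injective on the star of each vertex, since $p$ is a local homeomorphism and $L\subseteq\widetilde K$ is a subcomplex. Passing to universal covers, I would then use a standard tower / spherical-diagram argument to conclude that a combinatorial immersion into a hereditarily aspherical $2$-complex has aspherical source; thus $L$ is aspherical, and $\widetilde K$ is hereditarily aspherical.

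Once $\widetilde K$ is known to be hereditarily aspherical, Lemma~\ref{asf2} immediately gives that $G$ is AHA. The ``in particular'' statement is then the special case in which $K$ is chosen to be the presentation $2$-complex of a finite presentation of $G$. The main technical obstacle lies in the last step of verifying hereditary asphericity of $\widetilde K$: deducing asphericity of the subcomplex $L$ from that of the hereditarily aspherical image $p(L)$. This is essentially Whitehead's asphericity conjecture in the context of combinatorial immersions, and the hypothesis of hereditary asphericity of $K$ (rather than mere asphericity) is exactly what is needed to push the covering-space reduction through.
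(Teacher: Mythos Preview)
Your approach is essentially the same as the paper's: pass to the universal cover $\widetilde K$, observe that $G$ acts geometrically on it, verify that $\widetilde K$ is hereditarily aspherical, and then invoke Lemma~\ref{asf2}. The only difference lies in how the key step --- hereditary asphericity of $\widetilde K$ --- is handled. The paper does not argue this directly; it simply cites Huebschmann \cite[Theorem~B, p.~28]{Hu}, which states that hereditary asphericity passes to covering spaces. What you flag as ``the main technical obstacle'' is thus exactly the content of that citation.

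Your own sketch for this step (restrict the covering map to $L\to p(L)$, note it is an immersion, then appeal to a tower/spherical-diagram argument to transfer asphericity back) is in the right spirit --- this is indeed the circle of ideas in Huebschmann's paper --- but as written it is not a proof. In particular, the assertion that ``a combinatorial immersion into a hereditarily aspherical $2$--complex has aspherical source'' is nontrivial and is precisely where the work lies; the naive attempt (lift a null-homotopy of $p\circ\phi$ in $p(L)$ back through the covering $\widetilde K\to K$) only lands in $p^{-1}(p(L))$, not in $L$. So either supply the tower argument in full, or do what the paper does and cite \cite{Hu}.
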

\begin{proof}
The group $G$ acts geometrically on the universal cover $X$ of $K$, which
is clearly simply connected. Since hereditary asphericity
passes to covers (see \cite[Theorem B, p.\ 28]{Hu}),
it follows that $X$ is hereditarily
aspherical. Applying Lemma \ref{asf2} concludes the proof.
\end{proof}

\medskip
Various classes of groups with hereditarily aspherical presentation complexes
occur in the literature. We recall some of them below. By Lemma \ref{prescom}, all these
groups are AHA.

\begin{exs}
\label{4.4}
(1) {\it $1$--relator groups.} By a result of R.\ Lyndon (see \cite[Proposition III.11.1]{LS}), the presentation complex of a $1$--relator group is
aspherical. Moreover, since this complex is the wedge of few circles
(possibly none) and the subcomplex spanned by the unique $2$--cell, all its subcomplexes are aspherical. It follows that presentation
complexes of $1$--relator groups are hereditarily aspherical.

(2) {\it Knot groups.} Theorem 5.8 of \cite{CCH}, which applies to knot
complements by the comment on p.\ 34 of the same paper, shows that knot
groups satisfy a certain condition CLA stronger than asphericity
of some presentation complex of a group. By \cite[Proposition 2.4]{CCH}, CLA passes
to sub-presentations, from which it follows that any CLA presentation
complex is hereditarily aspherical. In particular, all knot groups
have presentation complexes that are hereditarily aspherical.
The same is true for other fundamental groups of $3$--manifolds
with non-empty boundary occurring in \cite[Theorem 5.8]{CCH}.

(3) {\it Small cancellation groups (finitely presented).} 
It is known that small cancellation
groups do not admit reduced spherical van Kampen diagrams. It follows that
their presentation complexes $X$ are aspherical. Moreover, any subcomplex of
any such $X$ is itself the presentation complex of a small cancellation group,
hence it is also aspherical.

The above observation generalizes to finitely presented 
graphical small cancellation groups
as described in \cite{Oll2}, \cite{Gru}. Namely, it is shown in these papers
that a
presentation complex of any such group, for any set of relations corresponding
to standard generating cycles of the fundamental group of the
involved graph, is aspherical. Moreover, any subcomplex of a presentation
complex as above is easily seen to be a presentation complex of a similar form,
for a subgraph of the initial graph, and hence it is also aspherical.

(4) {\it Random groups (in the sense of Gromov).}
By the fact that the linear isoperimetric inequality holds in random groups
for all reduced van Kampen diagrams, these groups do not admit spherical
reduced van Kampen diagrams, compare \cite[Section V.c]{Oll}. Since these properties
pass to sub-presentations obtained by deleting some relations,
the initial presentation complex is hereditarily aspherical.
\end{exs}

\rem
AHA groups resulting from Lemma \ref{prescom} have cohomological dimension $\leqslant 2$.
It is not known whether their asymptotic dimension is $2$ (compare \cite{Dra},
Problems 3.15(a) and 3.16).
\medskip

Lemma \ref{prescom} can be extended to some groups that are not necessarily
finitely presented, see Lemma \ref{prescom'} below. Note that this lemma does not
follow from Corollary \ref{ccrit}. Indeed, let $G$ be a group as in Lemma
\ref{prescom'}, and suppose it is not finitely presented.
Then for any metric on the universal cover
of the presentation complex of $G$, for which
the action of $G$ is isometric and proper, the diameters of the 2-cells are not
uniformly bounded; consequently, the smallest subcomplex containing a subset $A$ does not lie within a uniformly bounded distance from $A$,
and thus it is not a good candidate for a subspace $Y_A$ as required
in the assumptions of Corollary \ref{ccrit}.
Nevertheless, the lemma below follows by another argument.

\begin{lem}
\label{prescom'}
 Let $G$ be a finitely generated group whose
presentation complex (for some presentation with finite generating set) is
hereditarily aspherical.
Then $G$ is AHA.
\end{lem}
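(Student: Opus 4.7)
The strategy is to mimic the proof of Lemma~\ref{prescom}, replacing the direct appeal to Corollary~\ref{ccrit} by a direct application of Proposition~\ref{crit}. The obstruction to the former is that, since the presentation $\langle S\mid R\rangle$ can have infinitely many relators, the presentation complex $K$ may fail to be compact, and hence the $G$-action on its universal cover $X=\widetilde K$ is not cocompact.

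Set $X=\widetilde K$. Since hereditary asphericity is inherited by covers (\cite[Theorem B, p.~28]{Hu}), $X$ is a simply connected, hereditarily aspherical $2$-complex. Equip $X$ with the standard path metric in which each $1$-cell has length $1$ and each $2$-cell of combinatorial boundary length $\ell$ is a round Euclidean disk of perimeter $\ell$. Then $X$ is a geodesic metric space, $G$ acts on it freely and by isometries, and the orbit of a vertex is uniformly equivalent to $G$ with the word metric (shortcuts across a single $2$-cell can reduce distances only by a factor of at most $\pi/2$). By the u.e.\ invariance of AHA, it is enough to prove that $X$ is AHA.

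For every subset $A\subseteq X$ I would take $Y_A$ to be the smallest subcomplex of $X$ containing $A$. Then $A\subseteq Y_A$, monotonicity $A_1\subseteq A_2\Rightarrow Y_{A_1}\subseteq Y_{A_2}$ is immediate, and $Y_A$ is aspherical by hereditary asphericity of $X$. The finite filling-radius condition $\text{fill-rad}(r)<\infty$ follows from the fact that edge loops in $X^{(1)}$ of combinatorial length $\le r$ fall into finitely many $G$-orbits (since $X^{(1)}$ is locally finite and $G$-cocompact), so fixing one van Kampen filling per orbit gives a uniform bound on their filling radii; a general piecewise geodesic loop of length $\le r$ can be approximated at bounded distance by such an edge loop.

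The key remaining hypothesis, $Y_A\subseteq N_D(A)$ for a \emph{universal} constant $D$, is the main obstacle, since if relators in $R$ have unbounded length then $2$-cells of $X$ have unbounded diameter and the naive bound $D=\max\{\operatorname{diam}(c):c \text{ a }2\text{-cell}\}$ is no longer available. I would overcome this by refining the inductive construction of $\bar f$ in the proof of Proposition~\ref{crit}: rather than filling a $2$-simplex of $S$ by an arbitrary disk in $X$, I would arrange the fillings to use only $2$-cells of boundary length bounded in terms of $r$ (any loop of combinatorial length $\le 3r$ admits such a filling, because an oversized $2$-cell appearing in a filling could be replaced by smaller cells without producing a reduced spherical diagram, thanks to hereditary asphericity). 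Consequently $\bar f(S^{(2)})$ is contained in a subcomplex of diameter bounded in terms of $r$, and then $Y_{\bar f(S^{(2)})}$ has diameter bounded in terms of $r$ alone; this yields a value of $R=R(r)$ depending only on $r$, and the argument of Proposition~\ref{crit} goes through to show that $X$, and hence $G$, is AHA.
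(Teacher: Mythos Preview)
Your overall strategy---working, for each $r$, only with $2$-cells of boundary length bounded in terms of $r$ and then running the inductive construction from the proof of Proposition~\ref{crit}---is exactly the paper's approach (made explicit there as Claim~0, via the subcomplexes $\widetilde K_{\le L(r)}$).

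The gap is in your justification of why every edge loop of length $\le 3r$ can be filled using only such small $2$-cells. Your reason, ``an oversized $2$-cell appearing in a filling could be replaced by smaller cells without producing a reduced spherical diagram, thanks to hereditary asphericity'', does not work: hereditary asphericity is a statement about $\pi_2$ of subcomplexes and says nothing about whether a large relator is a consequence of smaller ones. For instance, in a $C'(1/6)$ presentation with independent relators of unbounded length (which is hereditarily aspherical), no relator can be ``replaced'' by smaller cells, so an oversized cell in a diagram cannot be locally excised. The correct argument is the one you already used for the filling radius, and it has nothing to do with asphericity: since $S$ is finite, the Cayley graph $X^{(1)}$ is locally finite and $G$-cocompact, hence there are only finitely many $G$-orbits of edge loops of length $\le 3r$; one van Kampen filling per orbit uses only finitely many relators, and their maximal length is the desired bound $L(r)$. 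This is precisely the paper's Claim~0. A minor additional slip: ``$\bar f(S^{(2)})$ is contained in a subcomplex of diameter bounded in terms of $r$'' is false as stated (the sphere $S$ may be arbitrarily large); what you need, and what the argument actually gives, is that $\bar f(\sigma)\subseteq N_C(f(\sigma^{(0)}))$ for each simplex $\sigma$, with $C=C(r)$.
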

\begin{proof}
The proof repeats the line of arguments from the proof of Proposition \ref{crit},
with some necessary modification. We begin with a preparatory observation which
leads to this modification.

Denote by $K(G)$ the presentation complex of $G=\langle S\,|\,R \rangle$,
where $S$ is finite. We assume that this complex is hereditarily aspherical.
Let $p:\widetilde K(G)\to K(G)$ be the universal covering of $K(G)$.
Given $L>0$, let $K_{\leqslant L}(G)$ be the subcomplex of $K(G)$ whose $1$--skeleton
coincides with that of $K(G)$, and which contains those $2$--cells of $K(G)$
which correspond to the relators from $R$ of length $\leqslant L$.
Observe that for any $L$ the subcomplex $K_{\leqslant L}(G)$ is finite and hereditarily
aspherical. Put also $\widetilde K_{\leqslant L}(G):=p^{-1}(K_{\leqslant L}(G))$.
Since the restriction of $p$ to $\widetilde K_{\leqslant L}(G)$ is a covering map
on $K_{\leqslant L}(G)$, and since hereditary asphericity passes to covers,
the complex $\widetilde K_{\leqslant L}(G)$ is also hereditarily aspherical.
The reader should also keep in mind that the $1$--skeleton of the complex
$\widetilde K_{\leqslant L}(G)$ coincides with the Cayley graph $C(G,S)$.

\medskip\noindent
{\bf Claim 0.} {\sl For each $r>0$ there is $L=L(r)$ such that any closed polygonal curve
of length $\leqslant 3r$ in the Cayley graph $C(G,S)$ is contractible in the complex
$\widetilde K_{\leqslant L}(G)$. Moreover, there is a number $F=F(r)$ such that any curve
$\gamma$ as above is contractible in the $F$--neighborhood of $\gamma$
in $\widetilde K_{\leqslant L}(G)$.}

\medskip
Both parts of Claim 0 easily follow from the fact that, up to $G$--action,
there are only finitely many closed curves $\gamma$ as above. Note that the constant $F$
in the claim corresponds to the filling radius parameter fill-rad$(3r)$.

For any subset $Z\subseteq \widetilde K_{\leqslant L}(G)$ let $Y_Z$ be the smallest subcomplex
of $\widetilde K_{\leqslant L}(G)$ containing $Z$. By what was said above, any such $Y_Z$
is aspherical. Let $D=D(L)$ be a number such
that for any $Z$ we have $Y_Z\subseteq N_D(Z)$. Such $D$ exists as, up to $G$--invariance,
we have finitely many types of $2$--cells in $\widetilde K_{\leqslant L}(G)$.

Now we get in the scheme of the proof of Proposition \ref{crit}.
Fix any subset $A\subseteq G$, a real number $r>0$ and a simplicial map
$f\colon S\to P_r(A)$ for some triangulation $S$ of a $k$--sphere, with $k\geqslant2$.
View $f$ as the map from the vertex set $S^{(0)}$ to $G$.
Let $L=L(r)$ and $F=F(r)$ be the numbers as in Claim 0, and let $D=D(L)$
be as in the previous paragraph. By referring to Claim 0, we can repeat the arguments as in the proof of
Proposition \ref{crit} to get the following claim, which is analogous to Claim 1
in the latter proof.

\medskip\noindent
{\bf Claim 1.} {\sl There exists a continuous map $\bar f\colon S\to \widetilde K_{\leqslant L}(G)$
and its continuous
extension $\bar F\colon D^{k+1}\to X$ such that:
\begin{enumerate}
\item $\bar f(v)=f(v)$
for any vertex $v\in S^{(0)}$,
\item for any simplex $\sigma$ of $S$ it holds
$\bar f(\sigma)\subseteq N_C(f(\sigma^{(0)}))$, and
\item $\bar F(D^{k+1})\subseteq N_C(A)$,
\end{enumerate}
where $C=r/2+F+D$ and $\sigma^{(0)}$ denotes the vertex set
of $\sigma$.}

\medskip
The remaining part of the proof is then exactly as that of Proposition \ref{crit}.
We omit further details.
\end{proof}

\medskip
\rems
(1) Note that an infinite CW complex is hereditarily aspherical iff all its finite
subcomplexes are aspherical.

(2) In view of (1), examples of groups which are not finitely presented, and to which Lemma \ref{prescom'}
applies, are small cancellation groups (also graphical ones) with infinitely many relators --
compare Example~\ref{4.4}(3) above.

\medskip
We finish the section with the statement of a problem which naturally occurs
in the context of this section (especially, Lemmas~\ref{prescom} and \ref{prescom'}),
and which may be viewed as a coarse group theoretic variant
of Whitehead's conjecture.

\medskip\noindent
{\bf Problem.} In accordance with Definition 2.1 (of the AHA property),
we say that a metric space $X$ is {\it asymptotically aspherical} if for each $r>0$
there is $R>0$ such that $X$ is $(i;r,R)$--aspherical in itself for every $i\geqslant2$.
Clearly, asymptotic asphericity is a coarse invariant. We pose the following problem:
{\it is every finitely generated $2$--dimensional and asymptotically aspherical group AHA?}

\medskip
In the above statement, being $2$--dimensional refers to any reasonable sense
of this notion, e.g.\ to (virtual) cohomological dimension, asymptotic dimension,
$1$--dimensional boundary (e.g.\ for word hyperbolic or CAT(0) groups), etc. Note also that
the assumption of asymptotic asphericity is necessary, in the sense that it must be
satisfied by any AHA group; on the other hand, we do not know
any examples of $2$--dimensional groups that are not asymptotically aspherical.

Observe that if the original Whitehead's conjecture holds then,
due to Lemmas \ref{asf2}, \ref{prescom} and \ref{prescom'}, the above problem
has positive answer for fundamental groups of $2$--dimensional
aspherical complexes (or more generally, for groups acting geometrically
by automorphisms on contractible $2$--complexes).

Compare the above statement
of coarse Whitehead conjecture with its other version given by
M.\ Kapovich -- see \cite[Problems 107 and 108]{Kap}.

\section{Weak systolicity and AHA}
\label{weak}

All the examples of finitely presented AHA groups provided in Section \ref{twocpl} are
groups of dimension at most two. It is an intriguing phenomenon that
AHA groups exist in every dimension (here ``dimension'' means e.g.\
(virtual) cohomological dimension or asymptotic dimension, but other
notions of a ``dimension of a group'' can be also considered).
The opportunity of constructing such groups is provided by
the following theorem whose special case concerning the
systolic groups was first proved in \cite{JS2}.

\begin{tw}
 \label{systaha}
A group $G$ acting {properly by isometries} on a finite dimensional weakly systolic complex with
$SD_2^{\ast}$ links is AHA. In particular systolic groups are
AHA.
\end{tw}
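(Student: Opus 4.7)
The plan is to verify the hypotheses of Corollary~\ref{ccrit} for the action of $G$ on the given weakly systolic complex $X$ with $SD_2^{\ast}$ links. Since the action is geometric (in particular cocompact), $X$ is finite-dimensional and there is a uniform upper bound $D_0$ on the diameter of any simplex of $X$. By Lemma~\ref{ws-contr}, $X$ is contractible and in particular simply connected, so the standing hypotheses of the criterion are met.

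To each subset $A\subseteq X$ I would associate the subcomplex $Y_A$ defined as the full subcomplex of $X$ spanned by the vertex set
\[
V(A):=\{\, v\in X^{(0)} : v \text{ lies in some closed simplex of } X \text{ meeting } A\,\}.
\]
Monotonicity $A_1\subseteq A_2\Rightarrow Y_{A_1}\subseteq Y_{A_2}$ is immediate from this definition. For the containment $A\subseteq Y_A$: any $a\in A$ lies in some simplex $\sigma$, whose vertices all belong to $V(A)$, so fullness of $Y_A$ forces $\sigma\subseteq Y_A$. For the neighborhood condition, each $v\in V(A)$ is within $D_0$ of $A$, and every point of $Y_A$ lies in a simplex of $Y_A$, hence within distance $2D_0$ of $A$; thus one can take $D:=2D_0$.

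The only remaining, and really the only substantive, step is the asphericity of $Y_A$. Here I would simply invoke Corollary~\ref{ws-asph}, which states exactly that every full subcomplex of a finite-dimensional weakly systolic complex with $SD_2^{\ast}$ links is aspherical. With all three bullets of Corollary~\ref{ccrit} verified, the corollary yields that $G$ is AHA. The statement for systolic groups is then automatic, since every systolic complex is weakly systolic with $SD_2^{\ast}$ links, as noted immediately after Definition~\ref{lasf}.

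I do not anticipate a genuine obstacle, because all the combinatorial difficulty has already been packaged earlier: asphericity of full subcomplexes rests on Proposition~\ref{fullasf} (full subcomplexes inherit $SD_2^{\ast}$) together with Theorem~\ref{logl} ($SD_2^{\ast}$ plus simple connectedness of the universal cover implies asphericity). What one could call the ``hard part'' — exhibiting a controlled family of aspherical subcomplexes that absorb arbitrary subsets — is precisely the content of taking full spans of vertex-thickenings, and this works uniformly thanks to the cocompactness of the action bounding simplex diameters.
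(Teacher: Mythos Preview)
Your proof is correct and follows essentially the same approach as the paper: verify the hypotheses of Corollary~\ref{ccrit} by assigning to each subset a full subcomplex and invoking Corollary~\ref{ws-asph} for asphericity. The only cosmetic difference is that the paper takes $Y_A$ to be the \emph{smallest} full subcomplex containing $A$, while you take the full span of all vertices of simplices meeting $A$; your choice is slightly larger but equally valid, and your explicit remarks on finite-dimensionality and the uniform simplex-diameter bound (both via cocompactness) spell out details the paper leaves implicit.
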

\begin{proof}
 Let $X$ be a weakly systolic complex with
$SD_2^{\ast}$ links, on which $G$ acts. By the definition of weak
systolicity, $X$ is
contractible, and by Corollary \ref{ws-asph}, every full subcomplex
of $X$ is aspherical. 
By Lemma~\ref{ws-contr} combinatorial balls in $X$ are contractible, and hence fill-rad$(r)<\infty$, for each $r>0$.
Thus, by Corollary \ref{ccrit}, $X$ is AHA, by
taking $Y_A$ to be the smallest full subcomplex of $X$ containing $A$,
for every subset $A\subseteq X$.
\end{proof}

\rem Note that it is not true in general that a weakly systolic complex is AHA.
The CAT(-1) cubical complex $Y$
dual to the tessellation of the hyperbolic $3$--space by regular right-angled dodecahedra is not AHA, by \cite[Corollary 6.2]{JS2}.
Consequently, the thickening $Th(Y)$ is weakly systolic (by Proposition~\ref{p:thcat-1}) but not AHA.
\medskip

The first examples of high dimensional systolic groups were
constructed in \cites{JS0, JS1}. In \cite{O-chcg} a simple
construction of high dimensional weakly systolic (systolic
and non-systolic) groups is provided. Below we present a construction
of high dimensional AHA right-angled Coxeter groups based on results
from \cite{O-chcg}. More precisely, the groups that we obtain
act geometrically on weakly systolic complexes with $SD_2^{\ast}$ links,
thus being AHA in view of Theorem \ref{systaha}.
\medskip

\rem
Constructions from \cites{JS0, JS1, O-chcg} provide examples of
(virtually) torsion-free hyperbolic groups of (virtual) cohomological
dimension $n$, for every $n$. For such a group, by results of
Bestvina-Mess \cite{BeMe}, the (topological) dimension of its
boundary is $n-1$ and thus, by \cite{BuLe}, the asymptotic
dimension of the group is $n$.

\subsection{A construction of high dimensional AHA groups}
\label{chcg}

Let $X_0$ be a finite $5$--large simplicial complex
with $SD_2^{\ast}$ links such that $H^{n_0}(X_0;\mathbb Q)\neq 0$.
We will construct inductively, out of $X_0$,
a sequence $(X_i)_{i\geqslant0}$ of
finite $5$--large simplicial complexes with $SD_2^{\ast}$ links such that
$H^{n_0+i}(X_i;\mathbb Q)\neq 0$, for all $i$.
\medskip

Assume that $X=X_k$ is constructed. We describe the construction of $X'=X_{k+1}$.
Let $(W,S)$ be the right-angled Coxeter system whose nerve is $X$,
i.e.\ $X=L(W,S)$ (see \cite{Davis} for details on Coxeter groups -- we follow here the Davis' notation).
The Davis complex $\Sigma=\Sigma(W,S)$ of $(W,S)$ is a CAT(0) cubical complex in which the link of every vertex is $X$ (so that in fact, in our case, $\Sigma$ is a CAT(-1) cubical complex). The thickening $Th(\Sigma)$ of $\Sigma$ (see Definition~\ref{thick}) is weakly systolic, by Proposition~\ref{p:thcat-1}.

Taking into account the assumption about $SD_2^{\ast}$ property of links, we obtain the following version of Main Theorem from \cite{O-chcg}, which makes the whole construction working.

\begin{tw}
 \label{indCox}
 Let $X$ be a finite $5$--large simplicial complex with $SD_2^{\ast}$ links, and such that $H^{n_0+k}(X;\mathbb Q)\neq 0$. Then there exists a torsion-free finite index subgroup $H$ of the right-angled Coxeter group $(W,S)$ with nerve $X$, having the following properties.
The quotient $X':=H\backslash Th(\Sigma(W,S))$ is a finite $5$--large simplicial complex with $SD_2^{\ast}$ links, satisfying the condition $H^{n_0+k+1}(X';\mathbb Q)\neq 0$.
\end{tw}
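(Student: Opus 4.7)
The plan is to follow, step by step, the construction from \cite{O-chcg}, while at each stage checking that the extra condition of possessing $SD_2^{\ast}$ links is preserved. The cohomology statement is imported essentially from \cite{O-chcg}; the new content here is the $SD_2^{\ast}$--control, which has been prepared by Lemma~\ref{sd2th} and Proposition~\ref{p:quot}.

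First, I would form the Davis complex $\Sigma=\Sigma(W,S)$ of the right-angled Coxeter system with nerve $X$. Since $X$ is $5$--large (in fact flag, as $X$ has $SD_2^{\ast}$ links and hence satisfies $SD_2^{\ast}$), the complex $\Sigma$ is a simply connected locally $5$--large (and therefore CAT$(-1)$) cubical complex. The link of each vertex of $\Sigma$ is a copy of $X$, and links of higher-dimensional cubes are identified with links of simplices in $X$; by the hypothesis that $X$ is a complex with $SD_2^{\ast}$ links, together with Proposition~\ref{fullasf}, all links of $\Sigma$ satisfy the $SD_2^{\ast}$ property.

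Next, I would choose $H$. Right-angled Coxeter groups are virtually torsion-free and residually finite, so we can pick a torsion-free finite index subgroup $H_0\leqslant W$; by intersecting $H_0$ with a further deep finite index normal subgroup of $W$ we obtain $H\leqslant W$ of finite index whose induced action on $Th(\Sigma)$ has minimal displacement at least $5$. Proposition~\ref{p:quot} then yields that $X':=H\backslash Th(\Sigma)$ is a $5$--large simplicial complex with $SD_2^{\ast}$ links. Finiteness of $X'$ is automatic, since $W$ acts cocompactly on $\Sigma$ (and hence on $Th(\Sigma)$, as the two complexes share the same vertex set) and $H$ has finite index in $W$.

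It remains to verify the cohomological condition $H^{n_0+k+1}(X';\mathbb{Q})\neq0$. Here I would appeal directly to the Main Theorem of \cite{O-chcg}, whose choice of $H$ is engineered precisely to produce a single additional dimension of rational cohomology. The fact that we are taking the quotient of $Th(\Sigma)$ rather than of $\Sigma$ does not change this: both complexes are contractible (the thickening by Lemma~\ref{ws-contr}), share the same vertex set, and the $W$--equivariant inclusion of the cubical skeleton of $\Sigma$ into $Th(\Sigma)$ is a $W$--equivariant homotopy equivalence, so the quotients are both $K(H,1)$'s and have identical rational cohomology.

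The main obstacle is, as always in this circle of ideas, the cohomology computation, but it is not something we redo here: it is inherited wholesale from \cite{O-chcg}. The genuinely new point is that the thickening/quotient operation preserves the $SD_2^{\ast}$--links condition, and that control is exactly supplied by Lemma~\ref{sd2th} combined with Proposition~\ref{p:quot}.
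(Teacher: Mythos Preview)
Your proposal is correct and follows essentially the same route as the paper: choose a torsion-free finite index $H$ with minimal displacement at least $5$ on $Th(\Sigma)$ (using virtual torsion-freeness and residual finiteness of $W$), apply Proposition~\ref{p:quot} to get the $5$--large and $SD_2^{\ast}$--links properties of $X'$, and import the cohomology statement from \cite{O-chcg}. Your extra remarks (that all cube-links of $\Sigma$ inherit $SD_2^{\ast}$ from the links of $X$, and that $H\backslash\Sigma$ and $H\backslash Th(\Sigma)$ are both $K(H,1)$'s so the cohomology transfer is harmless) are correct elaborations the paper leaves implicit; the paper simply cites Theorem~4.6 of \cite{O-chcg} for the cohomology, which applies to any such $H$ rather than requiring a specially ``engineered'' one.
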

\begin{proof}
Since Coxeter groups are virtually torsion-free (see \cite[Corollary D.1.4]{Davis})
and residually finite (see \cite[Section 14.1]{Davis}), there exists a torsion-free finite index subgroup $H<W$
with the following property. The minimal displacement of the action of $H$ on $Th(\Sigma)$
(induced by the action of $W$ on $\Sigma$ ) is at least $5$.
For such a subgroup $H$, by Proposition~\ref{p:quot}, the quotient $X':=H\backslash Th(\Sigma)$ is a $5$--large complex with $SD_2^{\ast}$ links

The fact that $H^{n_0+k+1}(X';\mathbb Q)\neq 0$ follows from the more general
result proved in \cite{O-chcg}
(namely Theorem 4.6 from that paper). \end{proof}

\begin{cor}
  \label{corCox}
The right-angled Coxeter group $W'$ with nerve $X'$ is AHA of virtual cohomological dimension at least $n_0+k+2$.
	\end{cor}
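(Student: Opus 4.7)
The statement breaks into two assertions: that $W'$ is AHA, and that $\mathrm{vcd}(W')\geqslant n_0+k+2$. I plan to handle them separately, leveraging the properties of $X'$ guaranteed by Theorem~\ref{indCox}, namely that $X'$ is finite, $5$-large, a complex with $SD_2^{\ast}$ links, and satisfies $H^{n_0+k+1}(X';\mathbb Q)\neq 0$.

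For the AHA assertion, the plan is to apply Theorem~\ref{systaha} directly to the canonical geometric $W'$-action on the thickening of its own Davis complex. Set $\Sigma':=\Sigma(W',S')$; since the nerve $X'$ is $5$-large, $\Sigma'$ is a simply connected locally $5$-large (hence CAT($-1$)) cubical complex. The link of each vertex of $\Sigma'$ is a copy of $X'$, while links of higher-dimensional cubes correspond to links in $X'$; by Definition~\ref{lasf} applied to $X'$, all of these cubical links satisfy the $SD_2^{\ast}$ property. Lemma~\ref{sd2th} then ensures that $Th(\Sigma')$ has $SD_2^{\ast}$ links, and Proposition~\ref{p:thcat-1} gives that $Th(\Sigma')$ is weakly systolic. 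The $W'$-action on $\Sigma'$ induces a geometric simplicial action on $Th(\Sigma')$, since both complexes share the same $0$-skeleton and the cubes of $\Sigma'$ are permuted isometrically. Theorem~\ref{systaha} then yields AHA for $W'$.

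For the $\mathrm{vcd}$ bound, I plan to invoke Davis's formula for the compactly supported cohomology of a Davis complex in terms of the nerve and its subcomplexes (see M.~Davis, \emph{The Geometry and Topology of Coxeter Groups}, Theorem~8.5.1). Among the summands of $H_c^n(\Sigma';\mathbb Q)$ provided by that formula is a copy of $\tilde H^{n-1}(X';\mathbb Q)\otimes \mathbb Q[W']$ coming from the empty spherical subset, so choosing $n=n_0+k+2$ and using $\tilde H^{n_0+k+1}(X';\mathbb Q)\neq 0$ shows that $H_c^{n_0+k+2}(\Sigma';\mathbb Q)\neq 0$. Since $W'$ is virtually torsion-free and any torsion-free finite-index subgroup $H'<W'$ acts freely and cocompactly on the contractible complex $\Sigma'$, this yields $\mathrm{vcd}(W')=\mathrm{cd}(H')\geqslant n_0+k+2$. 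The only input outside the paper's self-contained toolkit is the appeal to Davis's formula; every other ingredient is a direct consequence of results already established in the excerpt, so I expect this citation to be the single delicate point of the proof.
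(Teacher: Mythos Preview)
Your proposal is correct and follows essentially the same approach as the paper: both establish AHA via Theorem~\ref{systaha} applied to $Th(\Sigma')$ (using Proposition~\ref{p:thcat-1} and Lemma~\ref{sd2th}), and both derive the vcd bound from Davis's cohomological results for Coxeter groups. The paper's route for the vcd bound is marginally shorter---it passes from $\mathbb Q$ to $\mathbb Z$ coefficients via the universal coefficient theorem and then cites \cite[Corollary 8.5.5]{Davis} directly, rather than unwinding the compactly supported cohomology formula of \cite[Theorem 8.5.1]{Davis}---but the substance is the same.
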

  \begin{proof}
    Since $H^{n_0+k+1}(X';\mathbb Q)\neq 0$ we have that $H^{n_0+k+1}(X';\mathbb Z)\neq 0$, and thus by \cite[Corollary 8.5.5]{Davis}, the virtual cohomological dimension of $W'$ is at least $n_0+k+2$.
    Observe that $W'$ acts geometrically on its Davis complex $\Sigma'$, being a locally $5$--large cubical complex with links of all vertices isomorphic to $X'$. Thus, by Proposition~\ref{p:thcat-1}, the thickening $Th(\Sigma')$ is weakly systolic, and by Lemma~\ref{sd2th}, it has $SD_2^{\ast}$ links. As a group acting geometrically on a weakly systolic complex with $SD_2^{\ast}$ links, the group $W'$ is AHA, by Theorem~\ref{systaha}.
  \end{proof}

Now, if we started with $X_0$ being a complex with $SD_2^{\ast}$ links (e.g.\ with $X_0$ being a $5$--large graph), then, after performing $k$ steps of the above construction we get a corresponding complex $X_k$ and the associated AHA group of virtual cohomological dimension at least $k+1$.

\medskip\noindent
{\bf Remark.} If we start with a $6$--large complex $X_0$ and then, at every (inductive) step, we
assure that the corresponding injectivity radius is at least $6$, then all the groups we obtain
will be systolic and thus also AHA -- compare \cite[Subsection 6.1]{O-chcg}.

\section{Finiteness properties of AHA groups}
\label{FAHA}
In this section we prove that finitely presented AHA groups have type $F_{\infty}$.
Besides being of its own interest this result is used in Section \ref{CIAHA}
when we study connectedness at infinity.

\begin{tw}
\label{AHAF}
A finitely presented AHA group has type $F_{\infty}$.
\end{tw}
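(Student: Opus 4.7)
The natural tool is Brown's Finiteness Criterion (Theorem~\ref{BFC}), and the plan is to apply it to a suitable $G$-filtration of a contractible $G$-CW complex, with the AHA hypothesis used to verify the essential $(n-1)$-connectedness condition. To get type $F_\infty$ it suffices to get type $F_n$ for every $n\geqslant 1$, so fix $n$ and aim for $F_n$.

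I would take the ambient contractible complex $X$ to be the full simplicial complex $V(G)$ on the vertex set $G$ (every finite subset spans a simplex), with the simplicial $G$-action by left translation, and filter it by Rips complexes $X_d:=P_d(G)$ for $d=1,2,3,\dots$, each of which is a $G$-subcomplex and whose union is $X$. Two routine verifications take care of the side conditions in Brown's criterion. First, after translating so that $e$ is a vertex, every simplex of $P_d(G)$ has its vertex set contained in the finite ball $B_d(e)$; hence $G\backslash X_d$ is in fact a finite complex, and in particular has finite $n$-skeleton. Second, the $G$-stabilizer of any simplex permutes its finite vertex set in $G$ and so is a finite subgroup of $G$, which has type $F_\infty$ (cf.\ Subsection~\ref{fp}). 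Therefore Brown's criterion reduces matters to showing that $\{X_d\}$ is essentially $(n-1)$-connected.

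The heart of the proof is this essential $(n-1)$-connectedness, and this is exactly where AHA is used. The cases $k=0$ and $k=1$ are handled independently of AHA: for $d\geqslant 1$ the complex $X_d$ contains the Cayley graph of $G$ (with respect to a fixed finite generating set) and is therefore connected, and by Lemma~\ref{scRc} there is $D$ such that $X_d$ is simply connected whenever $d\geqslant D$, so loops in any $X_{d_0}$ die once we pass to $X_{d}$ with $d\geqslant D$. For $k\geqslant 2$, given a starting radius $d_0$, I would apply the AHA property to the subset $A=G$ and the parameter $r=d_0$ to obtain a uniform $R=R(d_0)$ such that every simplicial map from a triangulated $k$-sphere into $P_{d_0}(G)$ extends simplicially over a triangulated $(k+1)$-ball into $P_R(G)$, simultaneously for all $k\geqslant 2$. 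Then for any continuous $\varphi\colon S^k\to X_{d_0}$, the simplicial approximation theorem (subdividing only the domain) produces a homotopic simplicial map from some triangulation of $S^k$ into $X_{d_0}$, and AHA extends it simplicially to a ball mapping into $X_R$, which witnesses nullhomotopy of $\varphi$ there. Choosing a single $d_*\geqslant\max(D,R(d_0))$ kills $\pi_k(X_{d_0})\to \pi_k(X_{d_*})$ for all $0\leqslant k\leqslant n-1$ at once.

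The step I expect to require the most care is the translation between the purely simplicial content of the AHA definition (simplicial sphere into $P_r$, simplicial extension over a triangulated ball into $P_R$) and the topological content of essential $(n-1)$-connectedness in Brown's criterion (arbitrary continuous $S^k\to X_i$ becoming nullhomotopic in some $X_j$). Simplicial approximation bridges this gap cleanly because it subdivides only the sphere and lands in the original (undivided) Rips complex, so the subsequent AHA extension is immediately available. Once this translation is in place, Brown's criterion gives type $F_n$ for each $n\geqslant 1$, hence type $F_\infty$, completing the proof.
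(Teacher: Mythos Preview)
Your proof is correct and follows essentially the same approach as the paper: both apply Brown's Finiteness Criterion to the filtration of the full simplex on $G$ (equivalently $P_\infty(G)$) by Rips complexes, use Lemma~\ref{scRc} for simple connectedness, and invoke AHA (with $A=G$) for the higher-connectedness steps. The paper's proof is terser---it starts the filtration at $d=D$ so that each $P_d(G)$ is already simply connected and then says the essential $n$-connectedness ``follows immediately''---whereas you spell out the $k=0,1$ cases and the simplicial approximation bridge explicitly, but the underlying argument is the same.
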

\begin{proof}
Let $G$ be a finitely presented AHA group. Consider the family of
its Rips complexes (with respect to a fixed generating set) $\lk P_d(G) \rk_{d=D}^{\infty}$
with $D$ being a natural number given by Lemma \ref{scRc},
i.e.\ such that every $P_d(G)$ is simply connected.
Then $P_D(G) \subseteq P_{D+1}(G) \subseteq \cdots$ is a $G$--filtration
of the contractible $G$--CW complex
$P_{\infty}(G)=\bigcup_{d=D}^{\infty}P_d(G)$ -- see \cite[Lemma 7]{Al2}.
The group $G$ acts on $P_{\infty}(G)$ with finite (and thus having type
$F_{\infty}$) stabilizers and $G\backslash P_d(G)$ is finite for every $d$.

Since every $P_d(G)$ is simply connected and $G$ is AHA it follows immediately
that the filtration $\lk P_d(G) \rk$ is essentially $n$--connected, for every $n$.
Thus, by the Brown's Finiteness Criterion (Theorem \ref{BFC}),
the group $G$ has type $F_n$ for every $n$ and hence it has type $F_{\infty}$.
\end{proof}

\noindent
{\bf Remarks.} 1) The finite presentation assumption is essential.
For a finite group $F$, the lamplighter group
$F \wr \mathbb Z =F^{\mathbb Z}\rtimes \mathbb Z$
(where $\mathbb Z$ acts by shifts on
$F^{\mathbb Z}$) is a group without finite presentation,
and thus not of type $F_\infty$.
By a result of J.\ Zubik \cite{Z}, this group is AHA
since its asymptotic dimension is $1$.

2) A construction of infinitely presented graphical small cancellation groups of infinite asymptotic dimension is presented in \cite{O-sc}. 
By Lemma~\ref{prescom'} (see Remarks following it and Example~\ref{4.4}(3)) such groups are AHA. We do not know if there exist 
finitely presented AHA groups of infinite asymptotic dimension. Note, that in case of infinite dimension (as e.g.\ for monster groups constructed in \cite{O-sc}) the AHA property becomes an important tool, e.g.\ for restricting the class of subgroups.  

3) Assume that a finitely presented group $G$ satisfies the following property,
which is weaker than AHA. For every $r>0$ there exists $R>0$ such that
every subset of $G$ is $(i;r,R)$--aspherical in itself, for $i=2,\ldots,n-1$.
Then, as in the proof of Theorem \ref{AHAF}, we get that $G$ has type $F_n$
and a version of Corollary \ref{FpiAHA} holds.

\section{Connectedness at infinity of AHA groups}
\label{CIAHA}
In this section we study connectedness at infinity of AHA groups and provide some applications to the case of (weakly) systolic groups.

In general it is not obvious how to define \emph{homotopy groups at infinity} -- see \cite[Chapter 16]{Geo}.
However the following definition of vanishing of such groups seems natural.

\begin{de}[$\pi_i^{\infty}=0$]
\label{nasf}
For a topological space $X$ we say that $\pi_i^{\infty}(X)=0$, i.e.\ the \emph{$i$--th homotopy group at infinity vanishes} whenever the following holds.
For every compact $K\subseteq X$ there exists a compact $L\subseteq X$ with $K\subseteq L$ such that every map $S^i=\partial B^{i+1}\to X\setminus L$ extends to a map $B^{i+1}\to X\setminus K$.
\end{de}

\rem
In \cite{O-ciscg} the condition $\pi_i^{\infty}(X)=0$ is denoted by ``$\mr{Conn}_i^{\infty}(X)$''. We think the current notation is more natural. Moreover, it was used sometimes in the literature.
\medskip

Recall that a metric space $(X,d)$ is \emph{uniformly $n$--connected}, $n\geqslant 0$, if for every $0\leqslant i\leqslant n$ and for every $r>0$ there exists $R=R(r)>0$ such that the following property holds. Every map $S^i=\partial B^{i+1}\to N_r(\{x\})$ (i.e.\ with the image contained in some $r$--ball in $X$) can be extended to a map $B^{i+1}\to N_R(\{x\})$. Observe that if a group acts geometrically on an $n$--connected complex
then this complex is uniformly $n$--connected.

The next theorem is crucial for the definition of vanishing of the homotopy groups at infinity for groups instead of spaces. It is, together with its proof presented here, a folklore result.
That way of proving it is often referred to as ``connecting the dots method''. Because of lack of a precise reference we include the proof here. For similar approaches see e.g.\ \cite{FiW}, \cite{FO},
\cite[Chapter 18.2]{Geo}. Another approach can be found in \cite[Chapter 17.2]{Geo} and \cite[Proposition 2.10]{O-ciscg}.

\begin{tw}[$\pi_i^{\infty}=0$ is u.e.\ invariant]
\label{qipi}
Let $(X,d)$ and $(X',d')$ be two uniformly $n$--connected proper metric spaces. Assume that $X$ and $X'$ are uniformly equivalent. Then
$\pi_i^{\infty}(X)=0$ iff $\pi_i^{\infty}(X')=0$, for every $i\leqslant n$.
\end{tw}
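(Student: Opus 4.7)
The plan is to use the classical ``connecting the dots'' construction to transport spherical maps and their extensions between $X$ and $X'$ via the uniform equivalence, while carefully tracking distances so as to control which compact sets are avoided. I will fix a u.e.\ $h\colon X\to X'$ with coarsely inverse $h'\colon X'\to X$, the associated constants $g_1,g_2,N,M$ from Section~\ref{AHA}, and the functions $r\mapsto R_X(r)$, $r\mapsto R_{X'}(r)$ witnessing uniform $n$--connectedness (so any loop of diameter $\leqslant r$ bounds a singular $(k{+}1)$--disk of diameter $\leqslant R_X(r)$ in $X$, and similarly in $X'$, for all $k\leqslant n$). Only one direction needs to be proved by symmetry; I assume $\pi_i^\infty(X)=0$ and aim to deduce $\pi_i^\infty(X')=0$.

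Given a compact $K'\subseteq X'$, I would first pull it back to a compact set in $X$ by setting $K$ to be a closed ball around a basepoint containing $N_C(h'(K'))$ for a constant $C$ chosen below; properness of $X$ makes $K$ compact. Apply the hypothesis $\pi_i^\infty(X)=0$ to get a compact $L\supseteq K$ with the usual extension property. Push $L$ forward to $X'$ by setting $L'$ equal to a closed ball containing $N_{C'}(h(L))$, compact by properness of $X'$. The constants $C,C'$ will be dictated by the filling estimates from the connecting-the-dots construction; they depend only on $h,h',R_X,R_{X'}$, not on the input map.

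For the core step, given a map $f\colon S^i\to X'\setminus L'$ with $i\leqslant n$, I triangulate $S^i$ so finely that $f$ sends any simplex into a set of diameter at most some small prescribed $\eta$. On vertices I set $\tilde f(v):=h'(f(v))\in X$; consecutive vertices of the triangulation then lie within distance $\leqslant g_2(\eta)$ in $X$. Extend $\tilde f$ skeleton by skeleton: over each $k$--simplex $\sigma$, the image of $\partial\sigma$ already lies in a ball of radius $r_k$ in $X$ (with $r_0\leqslant g_2(\eta)$ and $r_{k+1}=R_X(r_k)$ growing under iterated filling), so uniform $k$--connectedness of $X$ yields a continuous fill inside a ball of radius $R_X(r_k)$. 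This produces a continuous $\tilde f\colon S^i\to X$. Since vertex values $h'(f(v))$ lie at distance $> g_1(\mathrm{dist}(L',h(\partial K))) - \text{small}$ from $K$, and all filling disks stay within a controlled neighborhood of the vertex images, the constants $C,C'$ can be arranged a priori so that $\tilde f(S^i)\cap L=\emptyset$. Then $\pi_i^\infty(X)=0$ gives a continuous extension $\tilde F\colon B^{i+1}\to X\setminus K$. Now invert: take a triangulation of $B^{i+1}$ extending the one on $S^i$, fine enough to keep adjacent vertex images of $\tilde F$ close; define $F$ on interior vertices by $F(v):=h(\tilde F(v))$, on boundary vertices by $F(v):=f(v)$, and fill over higher skeleta using uniform $n$--connectedness of $X'$. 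The same type of bookkeeping, now pushed through $g_1$ applied to the distance $d(\tilde F(B^{i+1}),K)$, ensures $F(B^{i+1})\subseteq X'\setminus K'$.

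The main obstacle is precisely this bookkeeping: at every skeletal stage the filling displaces the map by up to $R_X$ or $R_{X'}$ of the current diameter, and these displacements must be pre-absorbed into the choice of $K$ and $L'$ via the distortion functions $g_1,g_2$ and the shift constants $N,M$. The only subtle point is that $\eta$ (hence the triangulation) is chosen \emph{after} $\tilde f|_{S^{(0)}}$ is known (via continuity of $f$), whereas $C,C'$ must be chosen \emph{before} $f$ is given; one checks that the cumulative filling radius along the $(i{+}1)$ filling stages depends only on the universal data $R_X,R_{X'},g_1,g_2$ and on $\eta$, so taking $\eta$ small enough closes the loop. With these constants in place, both implications run in parallel, establishing the u.e.\ invariance of $\pi_i^\infty$ up to dimension $n$.
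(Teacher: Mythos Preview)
Your argument is essentially the same ``connecting the dots'' proof that the paper sketches: triangulate, push vertices through the uniform equivalence, fill skeleton by skeleton using uniform $n$--connectedness (this is exactly Lemma~\ref{ucfill}), and absorb the cumulative filling radii into the choice of the enlarged compacts. The paper's \emph{formal} proof takes a slightly different route---it factors through the auxiliary notion of asymptotic connectedness at infinity (Propositions~\ref{asinv} and~\ref{aspi=pi}), first showing $\hbox{as--}\pi_i^\infty$ is u.e.\ invariant and then that it coincides with $\pi_i^\infty$ on uniformly $n$--connected proper spaces---but the sketch given there is your direct argument, and the two are interchangeable. One small point you leave implicit: after the final filling, your $F|_{S^i}$ agrees with $f$ only on vertices, not on all of $S^i$; you need either to invoke Lemma~\ref{ucfill} with $Y'=S\cup B^{(0)}$ (so that $f$ itself is kept on the boundary), or to observe that $F|_{S^i}$ and $f$ are homotopic in $X'\setminus K'$ via one more application of uniform $n$--connectedness, as the paper's sketch notes.
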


\begin{proof}
It follows directly from Proposition \ref{aspi=pi} and Proposition \ref{asinv}.

For the reader's convenience let us sketch the proof here. It will also give a rough idea on the proofs of the two propositions mentioned above presented in Section \ref{AsAHA}.

Assuming $\pi_i^{\infty}(X')=0$ we want to show $\pi_i^{\infty}(X)=0$.
To do this, for every compact $K\subseteq X$ we have to find a compact
$L\supset K$ such that the map $\pi_i(X\setminus L)\to \pi_i(X\setminus K)$ induced by the inclusion is trivial. Let $h\colon X \to X'$ be a u.e.
By $\pi_i^{\infty}(X')=0$,
there exists a compact $L'\supset h(K)$ such that the map $\pi_i(X'\setminus L')\to \pi_i(X'\setminus h(K))$ is trivial. We claim that, for some constant $a>0$ depending only on $X,X'$, a compact $L=N_a(h'(L'))$ is as desired, where $h'$ is a u.e.\ coarsely inverse to $h$.

To prove the claim let $f\colon S^i\to X\setminus L$ be given.
We can find a triangulation $S$ of $S^i$ such that the image by $f$ of every simplex of $S$ is small (i.e.\ has the diameter smaller than a given constant).
Then we can find (if $a$ is big enough) an extension $f'\colon S^i \to X'\setminus L'$ of $h\circ f|_{ S^{(0)}}$ with small images of simplices. It can be done by extending
$h\circ f|_{ S^{(0)}}$ over all simplices in a controlled way, using the uniform
$n$--connectedness of $X'$ (more precisely: using Lemma \ref{ucfill}). By the choice of $L'$ there exists an extension $F'\colon B^{i+1} \to X'\setminus K'$ of
$f'$. Then we consider the map $h'\circ F'|_{B^{(0)}}\colon B^{(0)}\to X$, where $B$ is a triangulation of $B^{i+1}$ with $\partial B=S$ and such that the images of simplices of $B$ are small. Again, by the uniform $n$--connectedness of $X$ (Lemma \ref{ucfill}), we can extend $h'\circ F'|_{B^{(0)}}$ to a map $F\colon B^{i+1}\to X$ in such a way that images of simplices of $B$ are small. If $a$ is sufficiently large it follows that
$\mr{Im}\, F\subseteq X\setminus K$ and that $F|_{S^i}$ is homotopic in $X\setminus K$ with $f$. Thus the lemma is proved.
\end{proof}

In view of the theorem above the following definition makes sense.

\begin{de}[$\pi_i^{\infty}(G)=0$]
\label{gasf}
For a group $G$ we say that its \emph{$i$--th homotopy group at infinity vanishes}, denoted $\pi_i^{\infty}(G)=0$ if for some (and hence for any)
$i$--connected metric space $(X,d)$, on which $G$ acts geometrically, we have
$\pi_i^{\infty}(X)=0$.
\end{de}

\begin{tw}[$\pi_i^{\infty}=0$ for AHA]
\label{piAHA}
Let $G$ be an AHA group acting geometrically on an $n$--connected metric space. Then $\pi_i^{\infty}(G)=0$, for every $2\leqslant i\leqslant n$.
\end{tw}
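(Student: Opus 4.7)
The plan is to prove $\pi_i^\infty(X)=0$ directly, where $X$ is the $n$--connected geodesic space on which $G$ acts geometrically, by the ``connecting the dots'' approach already sketched in the proof of Theorem~\ref{qipi}. Note that since $G$ acts geometrically on $X$, the space $X$ is uniformly equivalent to $G$ (with word metric), so $X$ is itself AHA by the u.e.\ invariance recalled in Subsection~\ref{AHA}. Moreover, the $n$--connected space $X$, being acted on geometrically, is \emph{uniformly} $n$--connected: every sphere of bounded diameter bounds a ball of controlled diameter, for $k\leqslant n$.

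Fix a compact $K\subseteq X$ and $i$ with $2\leqslant i\leqslant n$. The goal is to choose a compact $L\supseteq K$ so that every continuous $f\colon S^i\to X\setminus L$ extends to a continuous $\bar F\colon B^{i+1}\to X\setminus K$. First, pick $r_0>0$ and, using uniform continuity of $f$, a triangulation $S$ of $S^i$ fine enough that each simplex of $S$ has image under $f$ of diameter at most $r_0$. Setting $A:=f(S^{(0)})\subseteq X\setminus L$, the vertex map $f|_{S^{(0)}}$ is then a simplicial map $\varphi\colon S\to P_{r_0}(A)$. Next, apply the AHA property of $X$ at parameter $r_0$: there exists a universal $R=R(r_0)$ and a triangulation $B$ of $B^{i+1}$ with $\partial B=S$ together with a simplicial extension $\Phi\colon B\to P_R(A)$. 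Crucially, $\Phi$ takes values in $P_R(A)$, i.e.\ its vertex values all lie in the \emph{same} set $A\subseteq X\setminus L$.

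Now convert $\Phi$ to a continuous map $\bar F\colon B^{i+1}\to X$ in the manner of Claim~1 in the proof of Proposition~\ref{crit}: one extends the vertex map $\Phi^{(0)}\colon B^{(0)}\to A$ skeleton by skeleton, at the $k$--th stage using uniform $(k-1)$--connectedness to fill each boundary of a $k$--simplex by a disk of controlled diameter $\rho_k=\rho_k(R)$. Since $k$ ranges only up to $i+1\leqslant n+1$, the uniform $n$--connectedness of $X$ suffices. The resulting $\bar F$ satisfies $\bar F(B^{i+1})\subseteq N_\rho(A)$ for some universal constant $\rho=\rho(R,n)$ independent of $f,K$. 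Define
\[
L:=\overline{N_{\rho+r_0+1}(K)}.
\]
Then $A\subseteq X\setminus L$ forces $N_\rho(A)\cap K=\emptyset$, so $\bar F(B^{i+1})\subseteq X\setminus K$. A parallel small-scale filling argument, also based on uniform $n$--connectedness and carried out within the $\rho$--neighbourhood of the original image $f(S^i)$, produces a homotopy between $\bar F|_{S^i}$ and $f$ inside $X\setminus K$; concatenating it with $\bar F$ yields the required extension.

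The main obstacle I expect is the bookkeeping in the third paragraph: one must show that the diameter-control constants $\rho_k$ and the final neighbourhood radius $\rho$ depend only on $R$ and on the uniform connectedness moduli of $X$, and \emph{not} on the size of $K$, of $f$, or of the triangulation $S$. This is analogous to the inductive filling constants built in the proof of Proposition~\ref{crit}, and ought to go through by the same scheme: at each dimensional stage one invokes the uniform $(k-1)$--connectedness modulus once, and these moduli compose into a finite hierarchy $\rho_1\leqslant\cdots\leqslant\rho_{i+1}=\rho$. Given that, one obtains a single universal $L$ depending only on $K$ (via its $(\rho+r_0+1)$--neighbourhood), which is exactly what the definition of $\pi_i^\infty(X)=0$ demands.
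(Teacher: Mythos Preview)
Your argument is correct and is essentially the paper's own proof unpacked. The paper deduces the theorem from Proposition~\ref{AHAaspi0} (AHA $\Rightarrow$ $\hbox{as--}\pi_i^\infty=0$, immediate from the definitions) together with the $(\Rightarrow)$ direction of Proposition~\ref{aspi=pi} (for uniformly $n$--connected proper spaces, $\hbox{as--}\pi_i^\infty=0$ implies $\pi_i^\infty=0$); the latter is proved by exactly the connecting-the-dots scheme you describe---triangulate $f$, extend simplicially in the Rips complex via the AHA hypothesis, then convert back to a continuous filling using uniform $n$--connectedness. The paper packages your inductive constants $\rho_k$ into Lemma~\ref{ucfill}, and applies that lemma with $Y'=S\cup B^{(0)}$, so that the resulting continuous extension already agrees with $f$ on the boundary; this removes the need for your final homotopy step between $\bar F|_{S^i}$ and $f$. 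The only substantive difference is organizational: the paper factors through the intermediate coarse invariant $\hbox{as--}\pi_i^\infty$, which is u.e.\ invariant in its own right and applies to groups that are not of type $F_i$ (e.g.\ lamplighter groups), whereas your one-pass argument bypasses this notion at the cost of that extra generality.
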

\begin{proof}
This follows directly from Proposition \ref{AHAaspi0} and Proposition \ref{aspi=pi} in Section \ref{AsAHA}.
\end{proof}

By the finiteness properties of AHA groups (Theorem \ref{AHAF}) we get immediately the following.

\begin{cor}
\label{FpiAHA}
A finitely presented AHA group $G$ satisfies $\pi_i^{\infty}(G)=0$, for every $i\geqslant 2$.
\end{cor}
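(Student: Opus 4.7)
The plan is to combine the two results already established in this section: Theorem~\ref{AHAF}, which guarantees that $G$ has type $F_\infty$, and Theorem~\ref{piAHA}, which gives $\pi_i^\infty(G)=0$ for $2\leqslant i\leqslant n$ whenever $G$ acts geometrically on an $n$--connected metric space. Thus for each $n\geqslant 2$ I only need to exhibit some $n$--connected proper geodesic metric space admitting a geometric $G$--action, and then invoke Theorem~\ref{piAHA}.

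By Theorem~\ref{AHAF}, the group $G$ admits a $K(G,1)$ CW complex $Y$ whose every skeleton is finite. Let $X$ denote its universal cover. Then $X$ is contractible, locally finite in each bounded dimension, and carries a free cellular $G$--action with finitely many $G$--orbits of cells in each dimension. Fix $n\geqslant 2$ and consider the $(n+1)$--skeleton $X^{(n+1)}$. Cellular approximation shows that the inclusion $X^{(n+1)}\hookrightarrow X$ induces isomorphisms on $\pi_i$ for $i\leqslant n$; since $X$ is contractible, it follows that $X^{(n+1)}$ is $n$--connected. Endow it with the standard piecewise Euclidean metric in which every $k$--cell is isometric to a regular Euclidean $k$--simplex of unit side length. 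Since the $G$--action is cellular, this metric is $G$--invariant, and $X^{(n+1)}$ becomes a proper geodesic metric space on which $G$ acts freely, properly discontinuously, cocompactly and by isometries, i.e.\ geometrically.

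Applying Theorem~\ref{piAHA} to the geometric action of $G$ on $X^{(n+1)}$ yields $\pi_i^\infty(G)=0$ for every $2\leqslant i\leqslant n$. Since $n$ was arbitrary, this conclusion holds for all $i\geqslant 2$, as required. There is no genuine obstacle in this argument: the substantive content is entirely contained in Theorems~\ref{AHAF} and~\ref{piAHA}, and the only remaining step is the routine passage from an abstract classifying space with finite skeleta to a concrete $n$--connected proper geodesic metric space carrying a geometric $G$--action, accomplished here by simple skeletal truncation.
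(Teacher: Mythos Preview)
Your argument is correct and matches the paper's own proof, which simply records that the corollary follows immediately from Theorem~\ref{AHAF} (type $F_\infty$) together with Theorem~\ref{piAHA}; you have merely made explicit the routine step of producing, from type $F_\infty$, an $n$--connected space with a geometric $G$--action for each $n$. One minor slip: the cells of a general CW $K(G,1)$ need not be simplices, so the phrase ``every $k$--cell is isometric to a regular Euclidean $k$--simplex'' is not literally correct---first pass to a simplicial $K(G,1)$ with finite skeleta (always possible by standard approximation), after which your piecewise Euclidean metric and the rest of the argument go through verbatim.
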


Having established the asphericity at infinity of AHA groups we would like to proceed as in \cite{O-ciscg}, i.e.\ to show that (finitely presented) AHA groups are not simply connected at infinity. This needs however further assumptions on the finiteness of a dimension of a group.
One version of a theorem concerning that question is the following.

\begin{tw}[$\pi_1^{\infty}\neq 0$ for AHA]
\label{npi1}
Let $G$ be a one-ended finitely presented AHA group such that
either

a) $\mr{vcd} (G) <\infty$ or

b) there exists a finite dimensional $G$--CW complex $X$ such that $G\backslash X$ is compact and cell stabilizers are finite.

Then $G$ is \emph{not simply connected at infinity}, i.e.\ it is not true that $\pi_1^{\infty}(G)=0$.

\end{tw}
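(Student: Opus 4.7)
The plan is to argue by contradiction: suppose $\pi_1^\infty(G)=0$. Combining this with Corollary~\ref{FpiAHA}, we obtain $\pi_i^\infty(G)=0$ for every $i\geqslant 1$. We will contradict the finite-dimensionality hypothesis by showing that, on the one hand, a suitable top-degree compactly supported cohomology group is nonzero, while on the other hand, an end--acyclicity argument forces all compactly supported cohomology to vanish.

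First reduce to the torsion-free case. Under hypothesis~(a), pass to a torsion-free finite-index subgroup $G'<G$ with $\mr{cd}(G')=n:=\mr{vcd}(G)<\infty$; under~(b), an analogous reduction is available (either work with a torsion-free finite-index subgroup acting on $X$, or pass to rational coefficients and Bredon cohomology to absorb the finite cell stabilizers). Being finite index in $G$, the group $G'$ is one-ended, finitely presented, AHA, and of type $F_\infty$ (Theorem~\ref{AHAF}), with $\pi_i^\infty(G')=0$ for all $i\geqslant 1$ (Theorem~\ref{qipi}). Pick a contractible $G'$-CW complex $Y$ on which $G'$ acts freely and cocompactly on every skeleton. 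The standard identification
\begin{align*}
H^*(G';\mathbb Z G')\cong H^*_c(Y;\mathbb Z),
\end{align*}
together with nonvanishing of the dualizing module $H^n(G';\mathbb Z G')$ for a torsion-free group of cohomological dimension $n$, gives $H^n_c(Y)\neq 0$.

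On the other hand, contractibility of $Y$ reduces $H^i_c(Y)$ to $\varinjlim_K\tilde H^{i-1}(Y\setminus K)$. From $\pi_j^\infty(Y)=0$ for all $j\geqslant 0$ (where $j=0$ encodes one-endedness), I would prove by induction on $j$ that the pro-system $\{\tilde H_j(Y\setminus K)\}_K$ is pro-trivial. The inductive step is a pro-Hurewicz argument: after passing to cofinal subsystems so that the complements are $(j{-}1)$-connected in the pro-sense, pro-triviality of $\pi_j$ transfers to pro-triviality of $H_j$. Dualizing via universal coefficients --- pro-triviality of both $H_j$ and $H_{j-1}$ is needed to kill the Ext term --- yields the corresponding vanishing of reduced cohomology in the direct limit, whence $H^i_c(Y)=0$ for every $i$, contradicting the nonvanishing established above.

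The hardest step is the pro-Hurewicz argument, i.e.\ converting the vanishing of the homotopy groups at infinity into uniform pro-acyclicity of the complements of compact sets. This requires that the maps of spheres supplied by the hypothesis be extended to maps of balls without escaping prescribed coarse neighborhoods, which in turn relies on the uniform contractibility of $Y$ provided by the geometric $G'$-action --- the same \emph{connecting the dots} mechanism as in the proof sketch of Theorem~\ref{qipi}. The finiteness of $\mr{cd}(G')$ ensures that the induction need only be run up to degree $n$ in order to exhibit the contradiction.
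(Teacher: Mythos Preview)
Your overall strategy is the same as the paper's: assume $\pi_1^\infty(G)=0$, combine with Corollary~\ref{FpiAHA} and one-endedness to get $\pi_i^\infty(G)=0$ for all $i\geqslant 0$, apply a Hurewicz-at-infinity argument to deduce acyclicity at infinity, translate this into $H^i(G;\mathbb ZG)=0$ for all $i$, and contradict the finite-dimensionality hypothesis. The paper does exactly this, but rather than unpacking the pro-Hurewicz and duality arguments it simply invokes the Proper Hurewicz Theorem \cite[Theorem~17.1.6]{Geo} and \cite[Corollary~4.2]{GeoM1} to reach $H^i(G;\mathbb ZG)=0$, and then contradicts \cite[Proposition~13.10.1]{Geo} in case~(a) and \cite[Proposition~2.9]{O-ciscg} in case~(b). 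Your sketch is a reasonable outline of what lies inside those citations.

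There is, however, a genuine gap in your treatment of case~(b). You propose to ``work with a torsion-free finite-index subgroup acting on $X$'', but hypothesis~(b) does \emph{not} guarantee that $G$ is virtually torsion-free; groups with a cocompact finite-stabilizer action on a finite-dimensional complex need not have any torsion-free subgroup of finite index. Your fallback --- ``pass to rational coefficients and Bredon cohomology to absorb the finite cell stabilizers'' --- is pointing in a workable direction, but as written it is a slogan rather than an argument: you would need to explain which nonvanishing statement replaces $H^n(G';\mathbb ZG')\neq 0$ and why the acyclicity-at-infinity step still goes through with those coefficients. The paper avoids this entirely by not reducing to the torsion-free case in~(b), instead citing \cite[Proposition~2.9]{O-ciscg}, which directly gives the required nonvanishing of some $H^i(G;\mathbb ZG)$ from the existence of the finite-dimensional cocompact $G$--CW complex with finite stabilizers.
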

\begin{proof}
We treat simultaneously the two cases. Note that without loss of generality we may assume that $G$ is torsion-free in case a).
Assume, by contradiction, that $\pi_1^{\infty}(G)=0$. Then, by Corollary \ref{FpiAHA} we have that $\pi_i^{\infty}(G)=0$ for every $i\geqslant 0$.
By the Proper Hurewicz Theorem
\cite[Theorem 17.1.6]{Geo}
we have that $G$ is $i$--acyclic at infinity with respect
to ${\mathbb Z}$, for arbitrary $i$.
By \cite[Corollary 4.2]{GeoM1} (compare also \cite[Corollary]{GeoM2} and \cite[Theorem 13.3.3]{Geo})
it follows that $H^i(G;\mathbb ZG)=0$ for all $i$.

But this contradicts \cite[Proposition 13.10.1]{Geo} in case a) or \cite[Proposition 2.9]{O-ciscg} in case b).
\end{proof}

\rems (1) The finite dimensionality assumptions are essential for the proof given above. The Thompson's group is an $F_{\infty}$ group satisfying
$\pi_i^{\infty}(G)=0$, for every $i\geqslant 0$. But its cohomological dimension is infinite -- see \cite[Section 13.11]{Geo}. We do not know whether there exist finitely presented AHA groups of infinite dimension.

(2) Observe that systolic groups and groups acting geometrically on
those weakly systolic complexes whose all full subcomplexes
are aspherical -- as described in Section \ref{weak} -- satisfy the assumption b) of Theorem \ref{npi1}.
\medskip

In the remaining part of the section we present sample corollaries of the results concerning connectedness at infinity given above.
In what follows we assume that $\mathbb R^n$ is equipped with a proper metric consistent with the topology, and
groups act by isometries with respect to the metric.
The first result is an extension of \cite[Corollary 6.3]{JS2}.

\begin{cor}
\label{nman}
Let $n\geqslant 3$. Groups acting geometrically on $\mathbb R^n$ are not AHA. In particular,
the fundamental group of a closed manifold covered by $\mathbb R^n$ is not AHA.
\end{cor}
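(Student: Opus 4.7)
Assume toward a contradiction that some $G$ acts geometrically on $\mathbb R^n$ (with $n\geqslant 3$) and that $G$ is AHA. The plan is to derive a contradiction from Theorem~\ref{npi1}(a), by verifying its hypotheses and then observing that $\mathbb R^n$ is simply connected at infinity.

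First, I would establish the structural hypotheses. By the \v Svarc--Milnor lemma, since $\mathbb R^n$ is a simply connected proper geodesic space (for the length metric associated to the given proper metric, which is invariant and bi-Lipschitz to it), the group $G$ is finitely presented and quasi-isometric to $\mathbb R^n$. In particular $G$ is one-ended (as $\mathbb R^n$ is, for $n\geqslant 2$). Next, to obtain finiteness of $\mathrm{vcd}(G)$, I would use the fact that $G$ has polynomial growth of degree $n$ (inherited by quasi-isometry from $\mathbb R^n$), so Gromov's polynomial growth theorem gives that $G$ is virtually nilpotent; any finitely generated nilpotent group contains a torsion-free finite index subgroup, and finitely generated torsion-free nilpotent groups have cohomological dimension equal to their Hirsch length, so $\mathrm{vcd}(G)<\infty$. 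Therefore the hypotheses of Theorem~\ref{npi1}(a) hold, and that theorem yields $\pi_1^\infty(G)\neq 0$.

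On the other hand, I would check directly that $\pi_1^\infty(G)=0$. Since $\mathbb R^n$ is $1$-connected and $G$ acts geometrically on it, Definition~\ref{gasf} reduces the claim to $\pi_1^\infty(\mathbb R^n)=0$. For any compact $K\subseteq \mathbb R^n$, take $L$ to be any closed ball containing $K$; then $\mathbb R^n\setminus L$ deformation retracts onto an $(n-1)$-sphere, which is simply connected when $n\geqslant 3$, so every loop in $\mathbb R^n\setminus L$ bounds a disk in $\mathbb R^n\setminus K$. This contradicts the conclusion of Theorem~\ref{npi1}(a), so $G$ is not AHA. For the in particular statement, if $M$ is a closed manifold with universal cover $\mathbb R^n$, then $\pi_1(M)$ acts freely and cocompactly on $\mathbb R^n$ as deck transformations, and pulling back a Riemannian metric from $M$ realizes this as an action by isometries of a proper metric consistent with the topology, so the first part applies.

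The main potential obstacle is purely bookkeeping: ensuring that the ``proper metric consistent with the topology'' on $\mathbb R^n$ is compatible with the tools we invoke (\v Svarc--Milnor, polynomial growth, uniform $1$-connectedness implicit in Definition~\ref{gasf}). Replacing the given metric by its associated length metric (which is $G$-invariant since the original is, and which is bi-Lipschitz equivalent on cocompact sets to a Euclidean metric) makes all the standard coarse-geometric inputs available, and no more delicate argument is needed.
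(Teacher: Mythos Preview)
Your approach via Theorem~\ref{npi1} works in spirit, but it is considerably heavier than what the paper has in mind, and your metric ``bookkeeping'' contains a genuine gap.

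The paper leaves Corollary~\ref{nman} without proof because it follows immediately from Theorem~\ref{piAHA} (or Corollary~\ref{FpiAHA}): $\mathbb R^n$ is contractible, so if $G$ were AHA then $\pi_i^\infty(G)=0$ for all $i\geqslant 2$; but $\pi_{n-1}^\infty(\mathbb R^n)\neq 0$ since the complement of any ball retracts onto $S^{n-1}$ and this sphere stays essential in the complement of any smaller compact. As $n-1\geqslant 2$, this is already a contradiction. No finiteness of $\mathrm{vcd}$, no Gromov polynomial growth theorem, no \v Svarc--Milnor are needed. This route is also robust with respect to the metric: Definition~\ref{nasf} is purely topological, and uniform $k$--connectedness of $(\mathbb R^n,d)$ follows from cocompactness of the action together with local contractibility of $\mathbb R^n$.

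By contrast, your argument for $\mathrm{vcd}(G)<\infty$ passes through polynomial growth, which you justify by ``replacing the given metric by its associated length metric''. This step fails in general: a proper metric on $\mathbb R^n$ consistent with the topology need not have any rectifiable curves --- for instance $d(x,y)=\|x-y\|^{1/2}$, on which $\mathbb Z^n$ still acts geometrically --- so the associated length metric can be identically $+\infty$ off the diagonal, and \v Svarc--Milnor is unavailable. Your argument is fine once the metric is a length metric (in particular for the ``in particular'' clause, where the metric is Riemannian), but as written it does not cover the general statement. The $\pi_{n-1}^\infty$ approach avoids this issue entirely.
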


As an immediate consequence we obtain the following corollary that extends results obtained in \cite[Section 3.1]{O-ciscg},
where only the case of torsion-free systolic groups was considered. It follows directly from Corollary~\ref{nman} in view
of the fact that finitely generated subgroups of AHA groups are AHA themselves, and systolic groups are AHA.

\begin{cor}
\label{nmans}
Let $n\geqslant 3$. Groups acting geometrically on $\mathbb R^n$ are
not isomorphic to subgroups of systolic groups. In particular,
the fundamental group of a closed manifold covered by $\mathbb R^n$ is not isomorphic to a subgroup of a systolic group.
\end{cor}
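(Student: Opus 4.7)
The plan is to chain together three facts established earlier in the paper: (i) systolic groups are AHA, which is part of Theorem~\ref{systaha}; (ii) AHA passes to finitely generated subgroups, a consequence of the uniform-embedding invariance of AHA recalled in Subsection~\ref{AHA} (a finitely generated subgroup $H$ of a finitely generated group $\Gamma$ embeds isometrically, via any Cayley graph inclusion for compatible generating sets, which in particular gives a uniform embedding of $(H,d_{S_H})$ into $(\Gamma,d_{S_\Gamma})$); and (iii) Corollary~\ref{nman}, which asserts that no group acting geometrically on $\mathbb R^n$ with $n\geqslant 3$ can be AHA. Combining these, if $G$ acts geometrically on $\mathbb R^n$ and were isomorphic to a subgroup $H$ of a systolic group $\Gamma$, then $H$ would be finitely generated (as it is isomorphic to the finitely generated group $G$) and AHA by (i) and (ii), contradicting (iii).

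For the second sentence, the fundamental group $\pi_1(M)$ of a closed manifold $M$ covered by $\mathbb R^n$ acts by deck transformations on its universal cover $\mathbb R^n$; since $M$ is closed and the cover is Galois, this action is properly discontinuous, cocompact and by isometries (for any lift of a Riemannian metric from $M$, which is proper and compatible with the topology), hence geometric in the sense of our standing convention. Therefore the previous paragraph applies and $\pi_1(M)$ is not a subgroup of any systolic group.

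The only point that needs a line of care is the subgroup inheritance step: one must confirm that the assertion ``subgroups of AHA groups are AHA'' from Subsection~\ref{AHA} really applies to an abstract isomorphic copy of $G$ sitting inside a systolic group $\Gamma$, rather than only to subsets with the induced word metric. But this follows immediately because isomorphic finitely generated groups are uniformly (even bi-Lipschitzly) equivalent in any of their word metrics, and AHA is a uniform equivalence invariant. I do not anticipate a genuine obstacle here; the corollary is a direct deduction from Corollary~\ref{nman} and the AHA-ness of systolic groups.
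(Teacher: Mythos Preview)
Your proof is correct and follows the same approach as the paper's own proof, which simply notes that the corollary follows directly from Corollary~\ref{nman} together with the facts that systolic groups are AHA and that finitely generated subgroups of AHA groups are AHA. Your version is more detailed in justifying the subgroup inheritance and the geometric action of $\pi_1(M)$, but the logical structure is identical.
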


\rem Other examples of groups that are not AHA can be found in \cite[Section 3.1]{O-ciscg}.
\medskip

The following corollary is a slight extension of \cite[Corollary 3.3]{O-ciscg} in the case of systolic groups.

\begin{cor}
\label{ext}
Let $K \rightarrowtail G \twoheadrightarrow H$ be a short exact sequence of infinite finitely generated groups with $G$ being a finitely presented AHA group satisfying the assumption a) or b) of Theorem \ref{npi1}.
Then none of the following conditions hold:

a) There is a one-ended finitely presented normal subgroup $N$ of $K$ and $G/K$ or $K/N$ is infinite;

b) $H$ is one-ended and $K$ is contained in a finitely presented infinite index subgroup of $G$.
\end{cor}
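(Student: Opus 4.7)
\emph{Plan.} The proof is by contradiction: we will argue that in each of cases (a) and (b), $G$ must in fact be simply connected at infinity, contradicting Theorem~\ref{npi1}.

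To apply Theorem~\ref{npi1}, the first step is to verify that $G$ is one-ended. Since $H=G/K$ is infinite, the subgroup $K$ is an infinite finitely generated normal subgroup of $G$ of infinite index. A finitely generated group with more than one end cannot contain such a subgroup: two-ended groups are virtually cyclic and hence have no infinite subgroup of infinite index, while for infinitely-ended groups, Stallings's theorem together with Bass--Serre theory forces every infinite normal subgroup to have finite index. Hence $G$ has exactly one end, and the relevant finiteness hypothesis (a) or (b) of Theorem~\ref{npi1} is assumed directly.

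In case (a), we have a tower $N\triangleleft K\triangleleft G$ with $N$ one-ended and finitely presented, and $G/N$ infinite (since $H$ is). This is precisely the setting of Mihalik-type theorems on simple connectivity at infinity for extensions with one-ended finitely presented kernel (see \cite[Chapters 16--17]{Geo}); applying such a theorem in two stages---first to $1\to N\to K\to K/N\to 1$ (or directly to $K$ when $K/N$ is finite, in which case $K$ is itself one-ended and finitely presented), and then to $1\to K\to G\to H\to 1$---yields $\pi_1^{\infty}(G)=0$. In case (b), the finitely presented infinite-index subgroup $L$ containing $K$, together with the one-endedness of $H$, plays a parallel role: a coset-decomposition argument uses one-endedness of $H$ to glue local simple connectivity at infinity across the (infinitely many) cosets of $L$ in $G$, again forcing $\pi_1^{\infty}(G)=0$. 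Either conclusion contradicts Theorem~\ref{npi1}.

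The main difficulty is marshaling the precise version of Mihalik's theorem applicable to each situation---particularly in case (a), where $N$ is only assumed normal in $K$ rather than in all of $G$, so that one must chain the intermediate simple-connectivity-at-infinity statements via the two extensions. One expects the argument of \cite[Corollary 3.3]{O-ciscg} to transport to the present AHA setting essentially verbatim, with Theorem~\ref{npi1} used in place of its systolic predecessor.
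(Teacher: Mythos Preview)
Your overall strategy---deriving $\pi_1^\infty(G)=0$ from condition (a) or (b) and then contradicting Theorem~\ref{npi1}---is exactly the paper's, and your explicit verification that $G$ is one-ended (needed to invoke Theorem~\ref{npi1}) is a reasonable addition that the paper leaves implicit.

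However, your treatment of case (a) has a genuine gap. You propose chaining Mihalik's theorem through the two extensions $1\to N\to K\to K/N\to 1$ and $1\to K\to G\to H\to 1$, but $K$ is only assumed finitely \emph{generated}, not finitely presented. When $K/N$ is infinite, the first application (concluding $\pi_1^\infty(K)=0$) is not well-posed without finite presentation of $K$, and the second application (which requires the kernel to be one-ended and finitely presented) fails for the same reason. Your sub-case $K/N$ finite does go through, but condition (a) does not force this. The paper sidesteps this entirely by citing Profio's Main Theorem \cite{P}, which is tailored to subnormal chains $N\triangleleft K\triangleleft G$: it yields $\pi_1^\infty(G)=0$ directly from $G$ finitely presented, $N$ one-ended and finitely presented, and $G/K$ or $K/N$ infinite, with no finite-presentation hypothesis on the intermediate group $K$. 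For case (b) the paper likewise cites \cite[Theorem~2]{M2} directly rather than sketching a coset argument. Your closing remark that the proof of \cite[Corollary~3.3]{O-ciscg} should transport verbatim is in fact correct---but that proof already uses \cite{P} and \cite{M2}, not the two-stage chaining you describe.
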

\begin{proof}
In view of Theorem \ref{npi1}, the corollary follows directly from \cite[Main Theorem]{P} in case a) and from
\cite[Theorem 2]{M2} in case b).
\end{proof}

The result above implies in particular the following. Let
$K \rightarrowtail G \twoheadrightarrow H$ be a short exact sequence of infinite finitely presented groups with $G$ being a finitely presented AHA group satisfying the assumption a) or b) of Theorem \ref{npi1}. Then neither $K$ nor $H$ has one end.

\section{Asymptotic connectedness at infinity}
\label{AsAHA}

In this section we introduce a new notion: vanishing of asymptotic homotopy groups at infinity. It is an asymptotic counterpart of the topological notions considered in the previous section. The first reason for introducing this is that it appears naturally in the studies of connectedness at infinity of AHA groups -- see Proposition~\ref{AHAaspi0} and its proof. On the other hand the asymptotic notion seems to be a valuable extension of its topological analogue. Usually, one defines the condition $\pi_i^{\infty}(G)=0$ only when $G$ has type $F_i$ -- cf.\ Definition \ref{gasf} and \cite[Section 17.2]{Geo} -- whereas the asymptotic counterpart is useful for finitely generated groups (compare Examples after Proposition~\ref{AHAaspi0} below).

\begin{de}[$\mbox{as--}\pi_i^{\infty}=0$]
\label{aspix}
Let $(X,d)$ be a metric space. We say that $\mbox{as--}\pi_i^{\infty}(X)=0$, i.e.\ that the \emph{$i$--th asymptotic homotopy group at infinity of $X$ vanishes} if the following holds.
For every $r>0$ there exists $R=R(r)>0$ such that for every bounded set $K\subseteq X$ there exists a bounded set $L\subseteq X$ containing $K$, for which $X\setminus L$ is $(i;r,R)$--aspherical in $X\setminus K$.
\end{de}

As a direct consequence of the definition above and the definition of AHA (Definition 2.1(b)) we have the following.

\begin{prop}
\label{AHAas0}
Let $(X,d)$ be an AHA metric space. Then $\mbox{as--}\pi_i^{\infty}(X)=0$, for every $i\geqslant 2$.
\end{prop}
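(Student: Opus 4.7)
The plan is to observe that the statement is essentially immediate once one unpacks the two definitions side by side. Given $r>0$, I will let $R>0$ be the constant supplied by the AHA property of $X$ at scale $r$, so that every subset $A\subseteq X$ is $(i;r,R)$-aspherical in itself for all $i\geqslant 2$. Crucially, this $R$ depends only on $r$, not on the subset $A$, which is exactly the quantifier order needed in Definition \ref{aspix}.

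For any bounded $K\subseteq X$ my plan is to simply take $L:=K$ (which is allowed since the definition of $\mbox{as--}\pi_i^{\infty}(X)=0$ requires only $K\subseteq L$). With this choice the required assertion that $X\setminus L$ is $(i;r,R)$-aspherical in $X\setminus K$ reduces to the statement that $X\setminus K$ is $(i;r,R)$-aspherical in itself. But this last fact is precisely what AHA, applied to the subset $A:=X\setminus K$, guarantees.

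The main (and only) point to verify is that AHA's definition does not restrict the subsets to which it applies --- in particular it applies to complements of bounded sets, which need not themselves be bounded or geodesic. Since Definition \ref{daha}(2) quantifies over \emph{every} $A\subseteq X$, there is no such restriction, and the argument goes through. I do not expect any genuine obstacle: the asymptotic definition was tailored precisely so that the freedom to choose $L$ (here trivially as $K$ itself) reduces matters to the defining property of AHA. Conceptually, this is the reason the proof can be stated in a single line, and the substantial content of the section lies instead in Proposition \ref{aspi=pi}, where the passage from the asymptotic to the ordinary vanishing of $\pi_i^{\infty}$ is carried out by the ``connecting the dots'' method sketched in the proof of Theorem \ref{qipi}.
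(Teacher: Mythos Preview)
Your proposal is correct and matches the paper's approach exactly: the paper gives no proof at all, simply declaring the proposition ``a direct consequence of the definition above and the definition of AHA (Definition 2.1(b)),'' which is precisely the unpacking you have written out with the choice $L:=K$ and $A:=X\setminus K$.
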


Next proposition establishes coarse invariance of the just
introduced notion, which allows to extend it to finitely generated
groups.

\begin{prop}[$\mbox{as--}\pi_i^{\infty}=0$ is u.e.\ invariant]
\label{asinv}
Let $(X,d)$ and $(X',d')$ be metric spaces and let $h\colon X\to X'$ be a $(g_1,g_2,N)$--uniform equivalence between them. Then
$\mbox{as--}\pi_i^{\infty}(X)=0$ iff $\mbox{as--}\pi_i^{\infty}(X')=0$, for every $i\geqslant 0$.
\end{prop}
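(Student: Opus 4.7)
By symmetry (swap the roles of $h$ and a coarsely inverse u.e.\ $h'\colon X'\to X$, which is itself uniform, say $(g_1',g_2',N')$, and satisfies $d(x,h'h(x))\leqslant M$ and $d'(y,hh'(y))\leqslant M$ for some $M>0$), it suffices to assume $\mbox{as--}\pi_i^{\infty}(X')=0$ and deduce $\mbox{as--}\pi_i^{\infty}(X)=0$. The overall strategy is the usual ``connecting the dots'' trick: transport a spherical simplicial map $f$ from $X$ to $X'$ via $h$, apply the hypothesis there to fill it in, and pull the filling back via $h'$, while taking care that the resulting simplicial map still agrees with $f$ on the boundary.

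More precisely: given $r>0$, I would set $r':=g_2(r)$ and let $R':=R'(r')$ be provided by the hypothesis in $X'$. For a bounded $K\subseteq X$, put $K':=N_M(h(K))$ (bounded since $h$ maps bounded sets to bounded sets through $g_2$), and let $L'\supseteq K'$ be the bounded set given by the hypothesis so that $X'\setminus L'$ is $(i;r',R')$--aspherical in $X'\setminus K'$. Set $L:=\{x\in X:h(x)\in L'\}$. The inclusion $K\subseteq L$ follows from $h(K)\subseteq K'\subseteq L'$, and the lower estimate $g_1(d(x,y))\leqslant d'(h(x),h(y))$ with $g_1(t)\to\infty$ forces $L$ to be bounded.

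Given a simplicial map $f\colon S\to P_r(X\setminus L)$ for $S$ a triangulation of $S^i$, the composition $h\circ f$ is a simplicial map $S\to P_{r'}(X'\setminus L')$: adjacency in $S$ is preserved through the bound $g_2$, and $h(X\setminus L)\cap L'=\emptyset$ by construction of $L$. Extend it by hypothesis to a simplicial $F'\colon B\to P_{R'}(X'\setminus K')$ for some triangulation $B$ of $B^{i+1}$ with $\partial B=S$. Now define $F\colon B^{(0)}\to X$ by $F|_{\partial B}:=f$ and $F(v):=h'(F'(v))$ for interior vertices. That $F$ is simplicial into $P_R(X\setminus K)$ for $R:=r+M+g_2'(R')$ reduces to checking three kinds of edges of $B$: boundary/boundary (distance $\leqslant r$), interior/interior (distance $\leqslant g_2'(R')$), and mixed; in the mixed case the triangle inequality gives $d(f(v),h'F'(w))\leqslant d(f(v),h'hf(v))+g_2'(d'(F'(v),F'(w)))\leqslant M+g_2'(R')$. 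The avoidance of $K$ at interior vertices is an immediate contradiction: $h'(F'(v))\in K$ would imply $d'(F'(v),hh'(F'(v)))\leqslant M$, placing $F'(v)\in N_M(h(K))=K'$, contrary to $F'(v)\in X'\setminus K'$.

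The only real subtlety is the boundary-matching: the naive filling $h'\circ F'$ gives $h'hf$ on $\partial B$ rather than $f$ itself, so one must overwrite with $f$ on $\partial B$ and absorb the resulting mismatch of size $M$ into the larger radius $R$ via the triangle inequality above. Everything else is straightforward bookkeeping with the inequalities defining a $(g_1,g_2,N)$--uniform equivalence, and the reverse implication follows by interchanging the roles of $h$ and $h'$.
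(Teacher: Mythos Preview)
Your argument is correct and follows essentially the same ``connecting the dots'' strategy as the paper's own proof; your choices $K'=N_M(h(K))$ and $L=h^{-1}(L')$ are in fact slightly cleaner than the paper's $K'=N_a(h(K))$ and $L=N_a(h'(L'))$ (with $a$ chosen so that $g_1(a)>N$), since they make the inclusions $h(X\setminus L)\subseteq X'\setminus L'$ and $h'(X'\setminus K')\subseteq X\setminus K$ essentially tautological.

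One minor technical gap: when you assert that boundary/boundary edges of $B$ satisfy $d(f(v),f(w))\leqslant r$, you are implicitly assuming that every edge of $B$ joining two vertices of $\partial B=S$ already lies in $S$. This need not hold for the triangulation $B$ handed to you by the hypothesis (think of a chord across a triangulated disk). The paper addresses this by subdividing $B$ so that $S$ becomes a full subcomplex. Alternatively, you can handle such an edge directly: since $F'$ is simplicial into $P_{R'}$ one has $d'(hf(v),hf(w))\leqslant R'$, and then
\[
d(f(v),f(w))\leqslant d(f(v),h'hf(v))+d(h'hf(v),h'hf(w))+d(h'hf(w),f(w))\leqslant 2M+g_2'(R'),
\]
so enlarging $R$ to $\max\{r,2M\}+g_2'(R')$ covers all three edge types uniformly.
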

\begin{proof}
The idea of the proof was explained in the (sketch of the) proof of Theorem
\ref{qipi}. It is rather straightforward.

Let $h'\colon X' \to X$ be a $(g_1',g_2',N')$--uniform equivalence that is coarsely inverse to $h$. Let $d(x,h'\circ h(x)),d(y,h\circ h'(y))\leqslant M$, for every $x\in X$ and $y\in Y$.
Without loss of generality we can assume that $g_1'=g_1\leqslant g_2=g_2'$, that they are monotone
and that $N=N'=M$.
Assume that $\mbox{as--}\pi_i^{\infty}(X')=0$. We will prove that then
$\mbox{as--}\pi_i^{\infty}(X)=0$. The other implication follows when one
interchanges $h$ and $h'$.

Let $r>0$. We claim that then there exists a constant $R=R(r)$ as desired in the definition
 of $\mbox{as--}\pi_i^{\infty}(X)=0$.

We prove the claim. Let $K\subseteq X$ be bounded and let $a>0$ be such that $g_1(a)>N$.
Then $h(K)\subseteq X'$ is bounded and there exists a bounded $L'\subseteq X'$ such that $X'\setminus L'$ is $(i;g_2(r),R')$--aspherical in $X'\setminus K'=X'\setminus N_a(h(K))$, where $R'$ is the constant from the definition of $\mbox{as--}\pi_i^{\infty}(X)=0$ corresponding to $g_2(r)$.
We show that $X\setminus L=X\setminus N_a(h'(L'))$ (which is obviously cobounded) is $(i;r,R)$--aspherical in $X\setminus K$, where $R=R(r)=N + g_2(R')$ is claimed to be the desired constant.

Let $f\colon S\to P_r(X\setminus L)$ be a map from a triangulated $i$--sphere $S$. The map $h$ induces a simplicial map $h_r\colon
P_r(X)\to P_{g_2(r)}(X')$. Consider the map $h_r \circ f \colon S\to P_{g_2(r)}(X')$. We show that $\mr{Im}(h_r\circ f) \subseteq X'\setminus L'$.
To show this let $x\in X \setminus L$ and $x'\in L'$ be two arbitrary points. It is enough to prove that $h(x)\neq x'$.
We have
\begin{align*}
d'(h(x),x')& \geqslant d'(h(x),h\circ h'(x')) - d'(h\circ h'(x'),x') \\
& \geqslant d'(h(x),h\circ h'(x')) - N  \geqslant g_1(d(x,h'(x'))) - N \\
& \geqslant g_1(a)-N >0.
\end{align*}
Thus $h(x)\neq x'$ and it follows that $\mr{Im}(h_r\circ f) \subseteq X'\setminus L'$.
By the choice of $L'$ there exists an extension $F'\colon B \to P_{R'}(X'\setminus K')$ of $h_r\circ f$, where $B$ is a triangulation of $B^{i+1}$ with $\partial B=S$. Without loss of generality we may assume that $S$ is full in $B$ (subdividing $B$ if necessary).

We define the desired extension $F$
of $f$ as follows. For a vertex $v\in S= \partial B$ we set $F(v)=f(v)$ and for a vertex $w$ outside $S$ we set $F(w)=h'\circ F' (w)$.
We show that this extends to a simplicial map
$F\colon B \to P_{R}(X)$. Let $v,w\in B$ be two vertices joined by an edge in $B$.
If $v,w\in S$ then (by the fact that $S$ is full) $d(f(v),f(w))\leqslant r\leqslant R$.
If $v\in S$ and $w$ not is in $S$ we have the following
\begin{align*}
d(F(v),F(w)) & = d(f(v),F(w)) \\
& \leqslant d(f(v),h'\circ h (f(v))) +
d(h'\circ h (f(v)), F(w)) \\
& \leqslant N + d(h'\circ F'(v), h'\circ F'(w))\leqslant N+g_2(d'(F'(v),F'(w))) \\
& \leqslant N + g_2(R')=R.
\end{align*}
If $v$ and $w$ lie outside $S$ we have
\begin{align*}
d(F(v),F(w)) & \leqslant d(h'\circ F'(v), h'\circ F'(w)) \leqslant g_2(d'(F'(v),F'(w))) \\
& \leqslant g_2(R') \leqslant R.
\end{align*}
So that in every case $F(v)$ and $F(w)$ are joined in $P_R(X)$ by an edge and thus we get a simplicial map $F\colon B \to P_R(X)$ extending $f$.

We show now that $\mr{Im}\; F \subseteq X\setminus K$. Let $x'\in X'\setminus K'$ and $x\in X\setminus K$ be two arbitrary points. It is enough to prove that $x\neq h'(x')$.
We have
\begin{align*}
d(x,h'(x'))& \geqslant d(h'\circ h(x),h'(x')) - d'(x, h'\circ h(x)) \\
& \geqslant d(h'\circ h(x),h'(x')) - N  \geqslant g_1(d(h(x),x')) - N \\
& \geqslant g_1(a)-N >0.
\end{align*}
Thus $x\neq h'(x')$ and it follows that $\mr{Im}\; F \subseteq X\setminus K$.

Summarizing, for an arbitrarily chosen $f\colon S \to P_r(X\setminus L)$ we constructed its extension $F\colon B\to P_R(X\setminus K)$. Hence
$X\setminus L$ is $(i;r,R)$--aspherical in $X\setminus K$ and thus
$\mbox{as--}\pi_i^{\infty}(X)=0$.
\end{proof}

In view of the result above the following definition makes sense.
\begin{de}[$\mbox{as--}\pi_i^{\infty}(G)=0$]
\label{aspiG}
Let $G$ be a finitely generated group. We say that \emph{$\mbox{as--}\pi_i^{\infty}(G)=0$}, i.e.\ that
the \emph{$i$--th asymptotic homotopy group at infinity of $G$ vanishes} if
$\mbox{as--}\pi_i^{\infty}(G)=0$ for $G$ seen as a metric space with the word metric induced by some (and thus by any) finite generating set.
\end{de}

The next proposition follows directly from Proposition \ref{AHAas0}.

\begin{prop}
\label{AHAaspi0}
If $G$ is an AHA group then $\mbox{as--}\pi_i^{\infty}(G)=0$ for every $i\geqslant 2$.
\end{prop}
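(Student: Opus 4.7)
The plan is to observe that Proposition~\ref{AHAaspi0} is essentially a group-theoretic restatement of Proposition~\ref{AHAas0}, once the latter is combined with the coarse invariance established in Proposition~\ref{asinv}. So the proof should be essentially immediate, just a matter of unwinding definitions.

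First I would fix a finite generating set $S$ for $G$ and consider the Cayley-graph metric $d_S$ on $G$. By the definition of AHA for groups (recalled at the end of Subsection~\ref{AHA}), the fact that $G$ is an AHA group means exactly that the metric space $(G,d_S)$ is AHA in the sense of Definition~\ref{daha}. Then I would apply Proposition~\ref{AHAas0} to the metric space $(G,d_S)$; this proposition is noted in the text as an immediate consequence of comparing Definition~\ref{aspix} with Definition~\ref{daha}(2), since for a fixed $r$ the same constant $R$ witnessing AHA serves as the witness for $\mbox{as--}\pi_i^\infty=0$. This yields $\mbox{as--}\pi_i^\infty(G,d_S)=0$ for every $i\geqslant 2$.

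To conclude, I would invoke Definition~\ref{aspiG}, according to which $\mbox{as--}\pi_i^\infty(G)=0$ means precisely that $\mbox{as--}\pi_i^\infty(G,d_S)=0$ for some (equivalently, any) word metric induced by a finite generating set. The fact that this definition is independent of the generating set $S$ (so that ``some'' can be replaced by ``any'') uses Proposition~\ref{asinv}, which guarantees that $\mbox{as--}\pi_i^\infty=0$ is a uniform-equivalence invariant of metric spaces, together with the standard observation that two word metrics on $G$ coming from finite generating sets are quasi-isometric and hence uniformly equivalent.

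There is no genuine obstacle here: all the substantive content has already been assembled. Proposition~\ref{AHAas0} does the real work of reading off the asymptotic $\pi_i^\infty$ vanishing from the very definition of AHA, and Proposition~\ref{asinv} supplies the coarse invariance needed to pass from metric spaces to groups. The ``hardest'' step is simply being careful about what the symbol $\mbox{as--}\pi_i^\infty(G)$ means for an abstract finitely generated group versus its Cayley graph, but this is resolved by the two-line definition chase sketched above.
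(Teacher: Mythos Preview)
Your proposal is correct and matches the paper's own argument: the paper simply notes that the proposition follows directly from Proposition~\ref{AHAas0}, and you have unwound exactly this, together with the coarse invariance (Proposition~\ref{asinv}) implicit in Definition~\ref{aspiG}. There is nothing more to add.
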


\noindent
{\bf Examples.} For $i\geqslant 2$ we have $\mbox{as--}\pi_i^{\infty}(\mathbb Z_2 \wr \mathbb Z)=0$, since the lamplighter group is AHA -- see the remark just after Theorem \ref{AHAF}. On the other hand this group is not finitely presented and thus one cannot (a priori) say what does ``$\pi_i^{\infty}(\mathbb Z_2 \wr \mathbb Z)=0$'' mean.
Similarly, by Remark (2) after Lemma~\ref{prescom'}, we have that $\mbox{as--}\pi_i^{\infty}(G)=0$ for infinitely presented small cancellation groups $G$.
This shows that the asymptotic asphericity (or connectedness) at infinity can be sometimes more useful than the topological counterpart.
\medskip

Now we show that in some cases (that occur naturally in the context of group actions), vanishing of the ``usual'' homotopy groups at infinity is equivalent to vanishing of the asymptotic ones. First we prove a simple technical lemma.

\begin{lem}
\label{ucfill}
Let $(X,d)$ be a uniformly $n$--connected ($n<\infty$) metric space and let $r>0$.
Then there exists $C=C(r)>0$ with the following property.
Let $Y$ be a simplicial complex of dimension at most $n$ and let
$f\colon Y'\to X$ be a continuous map from a subcomplex $Y'\subseteq Y$ containing all vertices of $Y$.
Assume that for every vertices $v,w$ joined by an edge of $Y$ we have $d(f(v),f(w))\leqslant r$
and that $f(\sigma)\subseteq N_r(\{f(v)\})$ for every simplex $\sigma$ of $Y'$ and every its vertex $v$.
Then there exists an extension $F\colon Y\to X$ of $f$ such that $F(\sigma)\subseteq N_{C}(\{v\})$, for every simplex $\sigma$ of $Y$ and every vertex $v$ of $\sigma$.
\end{lem}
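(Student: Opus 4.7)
The plan is to prove the lemma by induction on skeleta of $Y$, extending $f$ over one dimension at a time and controlling the diameter of the image of each new simplex via the uniform $n$--connectedness of $X$.

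First I would set $Y_i := Y' \cup Y^{(i)}$, so $Y_0 = Y'$ (since $Y'$ contains all vertices of $Y$) and $Y_n = Y$ (since $\dim Y \le n$). I would define constants $C_0 \le C_1 \le \cdots \le C_n$ recursively: let $R \colon (0,\infty) \to (0,\infty)$ be any function witnessing the uniform $n$--connectedness of $X$ (so that any map $S^i \to N_\rho(\{x\})$ extends to a map $B^{i+1} \to N_{R(\rho)}(\{x\})$, for $0 \le i \le n$). Set $C_0 := r$ and $C_{i+1} := R(C_i)$. The inductive hypothesis to maintain is that $f$ has been extended to $Y_i$ in such a way that for every simplex $\sigma$ of $Y_i$ and every vertex $v_0$ of $\sigma$ (a fixed one, chosen per simplex) one has $f(\sigma) \subseteq N_{C_i}(\{f(v_0)\})$.

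Next I would carry out the inductive step. Given the extension on $Y_i$, I take any $(i{+}1)$--simplex $\sigma$ of $Y$ that is not already contained in $Y'$. Its boundary $\partial\sigma$ sits inside $Y^{(i)} \subseteq Y_i$, so $f$ is defined on $\partial\sigma$. Pick a vertex $v_0$ of $\sigma$; by the inductive hypothesis, $f$ maps each codimension--one face of $\sigma$ into $N_{C_i}(\{f(v_0)\})$, and a short diameter estimate (using that the faces share vertices at distance $\le C_i$ from $f(v_0)$) shows $f(\partial\sigma) \subseteq N_{C_i}(\{f(v_0)\})$. Applying uniform $i$--connectedness to this map $\partial\sigma \cong S^i \to N_{C_i}(\{f(v_0)\})$ yields an extension $\sigma \cong B^{i+1} \to N_{R(C_i)}(\{f(v_0)\}) = N_{C_{i+1}}(\{f(v_0)\})$, which is what the invariant requires for $\sigma$. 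For simplices of $Y_i$ already mapped (and for those lying in $Y'$, whose images are controlled by the hypothesis with constant $r \le C_{i+1}$), the invariant is automatically preserved.

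After $n$ steps we obtain $F \colon Y \to X$ extending $f$, with $F(\sigma) \subseteq N_{C_n}(\{F(v_0)\})$ for a single vertex $v_0$ per simplex. Finally, to get the conclusion uniformly in the choice of vertex $v$ of $\sigma$, I would invoke the triangle inequality: any other vertex $v$ of $\sigma$ satisfies $F(v) \in F(\sigma) \subseteq N_{C_n}(\{F(v_0)\})$, hence $F(\sigma) \subseteq N_{2C_n}(\{F(v)\})$. Setting $C := C(r) := 2C_n = 2R^{\circ n}(r)$ (where $R^{\circ n}$ denotes $n$--fold iteration) gives the required constant.

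The steps are essentially routine; there is no serious obstacle, but the point that requires a little care is the bookkeeping of the constants. In particular one must note that $C$ depends only on $r$ and $n$ (through the iterated function $R$ of the ambient space $X$), and not on the combinatorics of $Y$ — which is what allows this lemma to be applied in the proof of Theorem~\ref{qipi} above, where $Y$ varies over arbitrary triangulations of spheres and balls.
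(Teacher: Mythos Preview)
Your approach is the same as the paper's: induct on skeleta, filling each new simplex using uniform $n$--connectedness centred at the image of one of its vertices. The paper maintains the invariant $F_k(\sigma)\subseteq N_{R_k}(\{f(v)\})$ for \emph{every} vertex $v$ of $\sigma$ throughout, rather than for one chosen vertex, which streamlines the inductive step and makes your final doubling unnecessary.

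There is one small slip in your bookkeeping. With your one-vertex invariant, the inductive hypothesis does \emph{not} give $f(\partial\sigma)\subseteq N_{C_i}(\{f(v_0)\})$ as you assert: the fixed vertex $w_\tau$ attached to a codimension-one face $\tau$ need not be $v_0$ (and the face opposite $v_0$ does not even contain $v_0$). What you actually get is $f(\tau)\subseteq N_{C_i}(\{f(w_\tau)\})\subseteq N_{C_i+r}(\{f(v_0)\})$, using that $w_\tau$ and $v_0$ are joined by an edge of $\sigma$. So the recursion should read $C_{i+1}=R(C_i+r)$ rather than $R(C_i)$. This is harmless for the conclusion --- $C$ still depends only on $r$, $n$, and the function $R$ --- but since you yourself flag the constants as the delicate point, it is worth getting right.
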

\begin{proof}
Let $f,Y$ be as in the assumptions.
We construct an extension $F_1\colon Y'\cup Y^{(1)} \to X$ of $f$ as follows. For every pair of vertices $v,w\in Y$
connected by an edge of $Y$
we choose, by uniform $n$--connectedness (with the constant $R(r)$), a path $Y^{(1)}\ni [v,w]\to
N_{R(r)}(\{f(v)\})\cap N_{R(r)}(\{f(w)\})$ with endpoints $f(v),f(w)$.
Then we proceed by induction. Assume we have found a constant $R_k>0$ and a map $F_k\colon Y'\cup Y^{(k)} \to X$ with
$F_k(\sigma)\subseteq N_{R_k}(\{f(v)\})$, for every $k$--simplex in $Y$ and every its
vertex $v$.
(Note that for $k=1$ we have done this above, with $R_1=R(r)$.)
Then, by uniform $n$--connectedness, we can find $R_{k+1}>R_k$ and an extension $F_{k+1}\colon Y'\cup Y^{(k+1)} \to X$ with $F_{k+1}(\tau)\subseteq N_{R_{k+1}}(\{f(w)\})$, for every $(k+1)$--simplex in $Y$ and every its
vertex $w$.

Eventually we obtain $F=F_n$ and $C=R_n$ satisfying the hypotheses of the lemma.
\end{proof}

\begin{prop}
\label{aspi=pi}
Let $(X,d)$ be a uniformly $n$--connected proper metric space. Then
$\mbox{as--}\pi_i^{\infty}(X)=0\Longleftrightarrow \pi_i^{\infty}(X)=0$, for $i\leqslant n$.
\end{prop}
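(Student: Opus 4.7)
The plan is to adapt the ``connecting the dots'' argument sketched in the proof of Theorem~\ref{qipi}, now specialized to a single space, so that no coarse inverse map is needed; the key tool in both directions is Lemma~\ref{ucfill}, which converts between simplicial maps into Rips complexes and continuous maps with controlled filling of simplices. Properness of $X$ is used only to replace bounded sets by their compact closures and conversely; uniform $n$--connectedness is used through Lemma~\ref{ucfill} to ``thicken'' vertex data into continuous maps.

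For the forward direction $\mbox{as--}\pi_i^{\infty}(X)=0 \Longrightarrow \pi_i^{\infty}(X)=0$ I would proceed as follows. Fix a compact $K\subseteq X$. Pick any $r>0$, let $R=R(r)$ be supplied by the asymptotic hypothesis, and let $C=C(R)$ be the filling constant of Lemma~\ref{ucfill}. Applying the asymptotic condition to the bounded set $N_C(K)$ yields a bounded $L_1$; taking $L$ to be its (compact) closure we have $K\subseteq L$. Given a continuous $f\colon S^i\to X\setminus L$, uniform continuity lets me triangulate $S^i$ as $S$ so finely that $f$ maps each simplex into a set of diameter at most $r$. The restriction of $f$ to $S^{(0)}$ is then a simplicial map into $P_r(X\setminus L_1)$, which by hypothesis extends to a simplicial $\tilde F\colon B\to P_R(X\setminus N_C(K))$ with $\partial B=S$; passing to a subdivision if necessary, I may assume $S$ is full in $B$. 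Applying Lemma~\ref{ucfill} to $Y=B$, $Y'=S\cup B^{(0)}$ (using $f$ on $S$ and $\tilde F$ on interior vertices) produces a continuous extension $F\colon B\to X$. For each simplex of $B$ lying in $S$, $F=f$ has image in $X\setminus L\subseteq X\setminus K$; every other simplex has an interior vertex whose $F$--value lies in $X\setminus N_C(K)$, so the corresponding $N_C$--neighbourhood from Lemma~\ref{ucfill} is contained in $X\setminus K$.

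For the reverse direction $\pi_i^{\infty}(X)=0 \Longrightarrow \mbox{as--}\pi_i^{\infty}(X)=0$ the strategy is essentially dual. Given $r>0$, let $C=C(r)$ be from Lemma~\ref{ucfill} and plan to take a convenient universal $R$ dominating the adjacency estimates below (e.g.\ $R:=r+2C$). Given a bounded $K$, pass to its compact closure and apply $\pi_i^{\infty}=0$ to obtain a compact $L'$, and set $L:=N_C(L')$. Given a simplicial $f\colon S\to P_r(X\setminus L)$, I first use Lemma~\ref{ucfill} to interpolate the vertex map into a continuous $\bar f\colon S^i\to X$ with $\bar f(\sigma)\subseteq N_C(f(v))$ for every simplex $\sigma$ and vertex $v$; since $f(v)\in X\setminus N_C(L')$, the image $\bar f(S^i)$ avoids $L'$. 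I then extend continuously to $\bar F\colon B^{i+1}\to X\setminus K$ via the topological hypothesis. Uniform continuity of $\bar F$ on the compact ball lets me choose a triangulation $B$ of $B^{i+1}$ (refining $S$ on the boundary, with $S$ full in $B$) so that each simplex has diameter at most $r$ under $\bar F$. Defining $F$ to agree with $f$ on boundary vertices and with $\bar F$ on interior vertices gives a simplicial map $B\to P_R(X\setminus K)$ extending $f$, as required.

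The main obstacle will be combinatorial bookkeeping rather than anything conceptual: arranging that $S$ is full in $B$ (so that adjacency of boundary vertices in $B$ is realized by edges of $S$, keeping the images within $r$ rather than merely within $2C$), and carefully tracking constants so that the universal $R$ in the asymptotic conclusion depends on $r$ alone and not on the particular $K$ or $L$. Both steps are fiddly but standard, and are already implicit in the sketch following Theorem~\ref{qipi}.
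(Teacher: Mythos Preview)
Your proposal is correct and follows essentially the same approach as the paper's proof: both directions use Lemma~\ref{ucfill} to pass between simplicial maps into Rips complexes and continuous maps with controlled simplex images, applied respectively to the bounded enlargement $N_C(K)$ (forward) and to $L=N_C(L')$ (reverse). The only cosmetic differences are in bookkeeping: the paper takes $R=2r$ in the reverse direction (which is a bit optimistic) while your $R=r+2C$ is safer; and your phrase ``refining $S$ on the boundary'' should read ``keeping $\partial B=S$ and refining only the interior'', since Definition~\ref{daha} requires $\partial B=S$ exactly --- but your constant $R=r+2C$ already absorbs the resulting $C$--sized mixed simplices near the boundary, so the argument goes through.
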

\begin{proof}
{\bf ($\Rightarrow$) } Let $K\subseteq X$ be compact and thus bounded. We prove that there exists a compact $L\supseteq K$ such that the map $\pi_i(X\setminus L)\to \pi_i(X\setminus K)$ induced by the inclusion is trivial.

Let $R>r>0$ be the constants from the definition of $\mbox{as--}\pi_i^{\infty}(X)=0$, i.e.\ $r,R$ are such that for every bounded $K'\subseteq X$, there is a bounded $L'\subseteq X$ such that $X\setminus L'$ is $(i;r,R)$--aspherical in $X\setminus K'$.

Let $L\subseteq X$ be a bounded set such that $X\setminus L$ is $(i;r,R)$--aspherical in $X\setminus N_{C}(K)$, where $C=C(R)$ is the constant from Lemma \ref{ucfill}. We claim that $L$ is as desired.

Let $f\colon S^i \to X\setminus L$ be a map from the $i$--sphere $S^i$.
We have to show that it has an extension $F\colon B^{i+1}\to X\setminus K$.
Take a triangulation $S$ of $S^i$ such that for
every simplex $\sigma$ of $S$ we have $f(\sigma)\subseteq N_r(\{v\})$, for each vertex $v$ of $\sigma$. Then we have an induced (by $f$) map
$f_r\colon S \to P_r(X\setminus L)$. By $\mbox{as--}\pi_i^{\infty}(X)=0$ this map has a simplicial extension $F_R\colon B\to P_R(X\setminus N_{C}(K))$, where $B$ is a triangulation of $B^{i+1}$ with $\partial B=S$.
Then the union of maps $F'=F_R \cup f \colon B^{(0)}\cup S \to X\setminus N_{C}(K)$ has the property that $F'(\sigma)\subseteq N_R(\{v\})$ for every simplex $\sigma$ of $B$ and every its vertex $v$. Thus, by Lemma \ref{ucfill}, there is an extension $F\colon B^{i+1} \to X$ such that $F(\sigma)\subseteq N_{C}(\{v\})$ for every simplex $\sigma \in B$ and every its vertex $v$.
Thus $\mr {Im}\, F\subseteq X\setminus K$ and we proved that
$\pi_i^{\infty}(X)=0$.
\vspace{0.3cm}

\noindent
{\bf ($\Leftarrow$) } Let $r>0$. We show that for every bounded $K\subseteq X$ there exists a bounded $L\subseteq X$ containing $K$ such that $X\setminus L$
is $(i;r,2r)$--aspherical in $X\setminus K$. Let such $K$ be given.
By $\pi_i^{\infty}(X)=0$ there exists a bounded $L'\supset K$, for which the
induced (by inclusion) map $\pi_i(X\setminus L')\to \pi_i(X\setminus K)$ is trivial. We claim that $L=N_{C}(L')$ is as desired, where $C=C(r)$ is the constant from Lemma \ref{ucfill}.
Let $f\colon S \to P_r(X\setminus L)$ be a simplicial map from a triangulation $S$ of an $i$--sphere $S^i$. By Lemma \ref{ucfill} there exists an extension $f'\colon S^i \to X$ of $f|_{S^{(0)}}$ with the image contained in $X\setminus L'$ (compare the proof of the opposite implication).
By $\pi_i^{\infty}(X)=0$ there exists an extension $F' \colon B^{i+1} \to X\setminus K$ of $f'$. Then we can find a triangulation $B$ of $B^{i+1}$ such that $\partial B=S$ and such that the simplicial map induced by $F'$ is a map
$F\colon B \to P_{2r}(X\setminus K)$ extending $f$.
\end{proof}

\section{No $2$--disk in the boundary}
\label{bdry}

This section is devoted to the proof of the following results.

\begin{tw}[No $2$--disk in CAT(0) and Gromov boundaries]
\label{d1}
Let $X$ be a geodesic metric space which is AHA. Then
\begin{enumerate}
\item if $X$ is proper and $\delta$--hyperbolic then its Gromov boundary
contains no $2$--disk;
\item if $X$ is complete and CAT(0) then its boundary (equipped with cone topology)
contains no $2$--disk.
\end{enumerate}
\end{tw}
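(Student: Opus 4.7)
My plan is to derive a contradiction by assuming a $2$--disk $D^2$ embeds in the boundary of $X$ (Gromov boundary in case (1), cone boundary in case (2)) and manufacturing, via geodesic projection of $D^2$ into $X$, a sequence of simplicial $2$--spheres in the Rips complexes $P_r(X)$ which by AHA would be filled by $3$--chains of uniformly bounded simplex diameter, and then showing by a coarse linking argument that such uniformly controlled fillings cannot exist.

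Fix a basepoint $x_0 \in X$, and for each $\xi \in D^2 \subseteq \partial X$ choose a geodesic ray $r_\xi\colon [0,\infty) \to X$ from $x_0$ to $\xi$: canonical in the CAT(0) case, and uniformly controlled by the Morse lemma in the $\delta$--hyperbolic case. For $t>0$ set $\phi_t(\xi):=r_\xi(t)$. Pick any sequence $t_n \to \infty$, together with triangulations $T_n$ of $D^2$ of mesh tending to zero, fine enough that adjacent vertices of $T_n$ have $\phi_{t_n}$--images within a fixed distance $r_0$ in $X$, so that $\phi_{t_n}\colon T_n \to P_{r_0}(X)$ is simplicial. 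For each $n$ and a suitable $k=k(n)\geqslant 1$, glue two copies $\phi_{t_n}(T_n)$ and $\phi_{t_{n+k}}(T_n)$ along their common subdivided boundary circle, connected by a simplicial cylinder built from the geodesic segments $[\phi_{t_n}(\xi), \phi_{t_{n+k}}(\xi)]$ for $\xi \in \partial D^2$. This produces a simplicial $2$--cycle $z_n$ in $P_{r_0}(X)$ that realizes at the Rips-complex level the fundamental class of the topological $2$--sphere $D^2 \cup_{\partial} D^2 \cong S^2$. Applying the AHA hypothesis (Definition \ref{daha}) to $A := z_n^{(0)}$ with $i=2$ yields a universal $R>0$ and simplicial $3$--chains $w_n \subseteq P_R(A)$ with $\partial w_n = z_n$.

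The contradiction comes from a coarse linking argument. Fix an interior point $\xi_0$ of $D^2$; the ray $r_{\xi_0}$ passes through the two disk-layers of $z_n$ at $\phi_{t_n}(\xi_0)$ and $\phi_{t_{n+k}}(\xi_0)$, and these layers together with the cylinder coarsely enclose the midpoint $r_{\xi_0}(\tfrac{t_n+t_{n+k}}{2})$. Arguing as in the corresponding step for $7$--systolic groups in \cite{Sw-propi} and for systolic groups in \cite{O-ciscg}, any simplicial $3$--chain in $P_R(z_n^{(0)})$ with boundary $z_n$ must then contain a vertex within distance $C=C(R,r_0)$ of this middle point; but $z_n^{(0)}$ lies at uniformly bounded distance from the two layers, whereas the middle point of $r_{\xi_0}$ is at distance at least $\tfrac{t_{n+k}-t_n}{2}$ from each layer. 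Choosing $k$ with $t_{n+k}-t_n > 2(C+R)$ breaks the inclusion $w_n \subseteq P_R(z_n^{(0)})$, a contradiction. Equivalently, in the language of Section \ref{prel}, the sequence $(z_n)$ projects to a Vietoris $2$--cycle on $\partial X$ representing the fundamental class of the sphere $D^2 \cup_\partial D^2$, which by Theorem \ref{v6} and Fact \ref{v7} is nontrivial, while the AHA fillings would witness it as a $V$--boundary.

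The main obstacle will be the rigorous verification of the separation step: that the two disk-layers of $z_n$ and the connecting cylinder really do coarsely enclose the middle segment of $r_{\xi_0}$, in a sense strong enough that any simplicial $3$--chain in $P_R(X)$ bounding $z_n$ must contain a vertex in a bounded neighborhood of that middle segment. In the CAT(0) case this uses convexity of the distance function and convex projection onto the sphere of radius $\tfrac{t_n+t_{n+k}}{2}$ around $x_0$, while in the $\delta$--hyperbolic case it is replaced by a thin-triangle argument together with Morse-type stability controlling the divergence of distinct rays. This is the step where the $2$--dimensionality of $D^2$ (as opposed to, say, an arc or a tree) is essential: a generic interior ray $r_{\xi_0}$ is genuinely linked with the approximate $2$--sphere $z_n$ only because $\xi_0$ sits inside a $2$--disk in $\partial X$.
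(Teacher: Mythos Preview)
Your overall architecture matches the paper's: build ``truncated cone'' $2$--spheres $z_n$ from two projected copies of $D^2$ at heights $t_n,t_{n+k}$ joined by a cylinder over $\partial D^2$, and show that AHA--fillings $w_n\subseteq P_R(z_n^{(0)})$ cannot exist. The divergence, however, is in the contradiction step, and there your argument has a genuine gap.

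Your ``coarse linking'' claim --- that any $3$--chain in $P_R(z_n^{(0)})$ bounding $z_n$ must contain a vertex within bounded distance of the midpoint $r_{\xi_0}(\tfrac{t_n+t_{n+k}}{2})$ --- is not justified and does not follow from the references you cite. Linking of a point with a $2$--sphere is a manifold/degree phenomenon; in a Rips complex of an arbitrary CAT(0) or $\delta$--hyperbolic space there is no local homology structure forcing a filling to ``pass through'' any particular region. Your closing ``equivalently'' is also off: the sequence $(z_n)$ does not project to a nontrivial Vietoris $2$--cycle in $\partial X$, because the underlying map $S^2\cong D^2\cup_{\partial}D^2\to D^2\subset\partial X$ factors through a contractible space and hence represents $0$ in $VH_2$.

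What the paper actually uses is $VH_1(\partial D^2)=VH_1(S^1)\cong\mathbb Z$, not $VH_2$ of anything. The contradiction is obtained by \emph{slicing} the hypothetical filling $H\colon B\to P_R(z_n^{(0)})$ at the middle radius: let $B_0$ be the union of $3$--simplices all of whose vertices land at height $\leqslant t_0+2R$, and write $\partial B_0=S_0+U$. Then $\partial U$ is exactly the middle circle $\Sigma\times\{t_0+2R\}$, and all vertices of $U$ lie in $z_n^{(0)}$ at heights in $[t_0+R,t_0+2R]$ --- hence on the \emph{cylinder} part, i.e.\ in $p_{t_0+2R}(\partial\Lambda)$ after a bounded vertical shift. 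This produces a $2$--chain in $P_{3R}(p_{t_0+2R}(\partial\Lambda))$ bounding the middle circle, contradicting the fact (transported from $VH_1(S^1)\ne 0$ via the conicality conditions) that this circle is homologically nontrivial there. So the role of the $2$--disk is not to provide an interior ray to link with, but to ensure (Claim~2 in the paper) that the nontrivial $1$--cycle on $\partial D^2$ bounds a $2$--chain in $P_\varepsilon(\Lambda)$, which is what lets you close up the top and bottom of the cylinder into a sphere in the first place.
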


\begin{tw}[No $2$--disk in systolic boundaries]
\label{d2}
Let $X$ be a systolic simplicial complex. Then its systolic boundary
(as defined in \cite{OP}) contains no $2$--disk.
\end{tw}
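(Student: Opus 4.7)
The plan is to mimic the proof of Theorem~\ref{d1}(1), combining that systolic complexes are AHA (Theorem~\ref{systaha}) with the Vietoris-homology setup from Subsection~\ref{vietpr}. Fixing a basepoint $v_0 \in X^{(0)}$, the strategy is the following: from a putative topological embedding $\iota\colon D^2 \hookrightarrow \partial_s X$ extract, for each $n$, a simplicial 2-sphere $z_n$ supported in a uniformly bounded-scale Rips complex of the combinatorial sphere $S_n(v_0, X)$; verify that $(z_n)$ constitutes a non-trivial Vietoris 2-cycle on $\partial_s X$ (equipped with the natural diameter function coming from the metrisation of \cite{OP}); apply AHA to $X$ to fill each $z_n$ by a simplicial 3-chain $w_n$ whose support remains deep in $X$; and observe that $(w_n)$ then exhibits $(z_n)$ as a $V$-boundary, contradicting Fact~\ref{v7}.

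To build the cycles $z_n$, one uses the description of $\partial_s X$ from \cite{OP} in terms of equivalence classes of good directed geodesic rays issuing from $v_0$. For each $p \in D^2$ select a representative ray $\gamma_p$ and set $D_n := \{\gamma_p(n) : p \in D^2\} \subseteq S_n(v_0, X)$. Continuity of $\iota$ and the tracking properties of good rays from \cite{OP} ensure that, for a universal scale $r$, each $D_n$ is combinatorially a topological 2-disk in $P_r(S_n(v_0, X))$ and that the boundary circles $\partial D_n$ converge, in the topology of $\partial_s X$, to $\iota(\partial D^2)$. Producing from each $D_n$ a simplicial 2-cycle $z_n$ (for instance by doubling across $\partial D_n$, or by pairing $D_n$ with a corresponding $D_{n'}$ for some $n' > n$ along their boundary), one gets a sequence $(z_n)$ which, because $\iota$ is a topological embedding of an honest 2-disk, defines a non-trivial class in $VH_2$ of the limiting space.

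The contradiction then follows from AHA. Since $X$ is AHA and each $z_n$ is a simplicial 2-sphere in $P_r(X)$, there is a universal $R > 0$ (independent of $n$) such that $z_n$ bounds a simplicial 3-chain $w_n$ in $P_R(X)$ with support inside the $R$-neighbourhood of the support of $z_n$. In particular the supports of the $w_n$ lie at distance at least $n - R$ from $v_0$ and thus eventually leave every bounded subset of $X$; hence in the Vietoris formalism on $X \cup \partial_s X$ the sequence $(w_n)$ certifies $(z_n)$ as a $V$-boundary, contradicting Fact~\ref{v7}. The main obstacle, and the step that forces genuine use of systolicity rather than just of AHA, is the second paragraph above: one must show that the topological 2-disk in $\partial_s X$ propagates to combinatorial non-triviality of the $z_n$ at every scale. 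Here the essential input is the systolic-geometric tracking of nearby good rays established in \cite{OP}, which replaces the exponential divergence available in the hyperbolic setting of Theorem~\ref{d1}(1) and must be used to control both the intrinsic combinatorial topology of $D_n$ and the convergence of $\partial D_n$ to $\iota(\partial D^2)$.
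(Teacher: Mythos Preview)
The non-triviality step is a genuine gap. You claim $(z_n)$ ``defines a non-trivial class in $VH_2$ of the limiting space'', but you never identify that space or justify the claim; the obvious candidate $D^2$ has $VH_2=0$, and nothing is assumed about $VH_2(\partial_s X)$. (Your ``doubling'' option in fact produces the zero $2$--cycle.) The paper does not use $2$--dimensional Vietoris homology at all: the homological input is $VH_1(S^1)=\mathbb Z$ (Claim~1 in the proof of Proposition~\ref{d4}, resp.\ Corollary~\ref{g4}). The $2$--spheres in $X$ are built as boundaries of truncated cones $\Delta\times[t_0,t]$ inscribed in the family of rays, and their non-fillability in $P_R$ of their own vertex set is proved by a slicing argument (Part~3 of the proof of Proposition~\ref{d4}): a hypothetical filling, intersected with a half-space at an intermediate radius and then radially projected, yields a $2$--chain in $P_L(p_s(\partial\Lambda))$ bounding the image of a $1$--cycle that Claim~4(2) certifies as non-trivial there. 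Your ``pairing $D_n$ with $D_{n'}$'' option is close to this truncated-cone construction, but you have not supplied the non-fillability argument, and your proposed substitute via $VH_2$ does not work.

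There is a second, structural gap: for systolic complexes the boundary is not known to carry a \emph{metric} with respect to which good geodesic rays are conical in the sense of Definition~\ref{d3}; Lemma~\ref{s2} gives only an approximate convexity estimate with additive defect $D$, which obstructs the triangle inequality. The paper handles this by introducing \emph{gap functions} (Subsection~\ref{gap}) --- symmetric, positive, with no triangle inequality required --- and proving the gap analogue of Proposition~\ref{d4} (Proposition~\ref{g3}). The systolic-specific work is then Proposition~\ref{s1}: an explicit gap function $g(\xi,\eta)=1/N(\xi,\eta)$ on $\partial_s X$ is constructed, shown to be compatible with the topology using the standard neighbourhoods of \cite{OP}, and the chosen family of good rays is verified to be conical with respect to $g$. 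Your appeal to ``tracking properties of good rays from \cite{OP}'' points at the right raw material but does not carry out this construction.
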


The proof of Theorem \ref{d1} occupies Subsection \ref{pr9.1},
while that of Theorem \ref{d2} is preceded by a preparatory Subsection \ref{gap},
and occupies Subsection \ref{sysgap}.

\medskip\noindent
{\bf Remarks.}
(1) The fact that the boundary contains no $2$--disk was established earlier
by the second author, in \cite{Sw-propi}, for the class of $7$--systolic simplicial complexes
(see \cite{Sw-propi}, Main Theorem and Remarks 2.2(1) and 2.5(3)).
Since $7$--systolic complexes are both systolic and $\delta$--hyperbolic,
and since their Gromov boundaries coincide with systolic ones, both
Theorems \ref{d1} and \ref{d2} generalize the result from \cite{Sw-propi}.
\noindent

(2) Theorems \ref{d1} and \ref{d2} are not trivially true only when
the corresponding boundaries have topological dimension at least $2$.
Examples of AHA groups with arbitrarily large dimension of the boundary,
including CAT(0), word-hyperbolic and systolic groups, are constructed
e.g.\ in \cite{JS1} and in Subsection~\ref{chcg} of the current paper.

\subsection{Proof of Theorem \ref{d1}.}
\label{pr9.1}

All the results above follow from a single but more technical observation,
Proposition \ref{g3}, in view of the rather known properties
of the corresponding classes of spaces $X$.
A special case of Proposition \ref{g3}, sufficient to prove Theorem \ref{d1}, is
Proposition \ref{d4} below.
To formulate this result, we need some preparations.

Recall that a {\it geodesic ray} in a metric space $X$ is an isometric map
$r\colon \mathbb R_+\to X$; it is {\it based at a point $x_0\in X$} if $r(0)=x_0$.

\begin{de}[$D^2$--conical]
\label{d3}
Let $r_\lambda:\lambda\in\Lambda$ be a family of geodesic
rays in a metric space $(X,d_X)$, based at one point $x_0\in X$, parametrized by
a metric space $(\Lambda,d_\Lambda)$. We say that this family is \emph{conical}
(with respect to the metric $d_\Lambda$ on the parameter space)
if for some constant $C>0$ the following two conditions are satisfied:

\begin{enumerate}
\item $\forall N>0\;\exists\varepsilon>0\; \forall t\leqslant N$
if $d_\Lambda(\lambda,\mu)\leqslant \varepsilon$ then $d_X(r_\lambda(t),r_\mu(t))\leqslant C$,
\item $\forall L\geqslant C\;\forall\varepsilon'>0\;\exists N'>0\;\forall t\geqslant N'$ if
$d_X(r_\lambda(t),r_\mu(t))\leqslant L$ then $d_\Lambda(\lambda,\mu)\leqslant \varepsilon'$.
\end{enumerate}

A metric space is \emph{$D^2$--conical} if it admits a conical family
of geodesic rays with the parameter space $\Lambda$ homeomorphic
to the $2$--disk $D^2$.
\end{de}

The next proposition reveals the relationship between $D^2$--conicality and AHA.

\begin{prop}
\label{d4}
If a metric space is $D^2$--conical then it is not AHA.
\end{prop}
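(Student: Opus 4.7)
The plan is to prove the contrapositive: assuming $X$ admits a $D^2$--conical family $(r_\lambda)_{\lambda\in\Lambda}$ with $\Lambda\cong D^2$, based at $x_0$ and with conicality constant $C$, I would produce a subset $A\subseteq X$, a scale $r_0$ depending only on $C$, and a sequence of simplicial $2$--spheres in $P_{r_0}(A)$ whose simultaneous fillings at a universal scale $R$ are ruled out by a Vietoris--homological obstruction at infinity, contradicting Definition~\ref{daha}(2). The spheres will arise as boundaries of solid cylinders in parameter space. Concretely, I would pick times $t_n\to\infty$ and $\varepsilon_n$--nets $\Lambda_n\subseteq\Lambda$ with $\varepsilon_n\to 0$ such that, by condition~(1) of Definition~\ref{d3}, the vertex map $\lambda\mapsto r_\lambda(t_n)$ is simplicial at scale $C$ with respect to a compatible triangulation $T_n$ of $\Lambda$. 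Glueing two such disk approximations at consecutive times $t_n,t_{n+1}$ by a lateral ladder of radial segments along $\partial\Lambda$ (subdivided at integer times) yields simplicial $2$--spheres $f_n\colon S_n\to P_{r_0}(A)$, where $S_n$ triangulates $S^2$ and $A$ is the union of all vertex images used.

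Suppose for contradiction that $X$ is AHA. Then Definition~\ref{daha}(2) applied to $A$ at scale $r_0$ furnishes a universal $R$ and simplicial extensions $F_n\colon B_n\to P_R(A)$ of $f_n$, with $B_n$ triangulating $B^3$. Identifying $P_\varepsilon(A)$ with the Vietoris complex $V_\varepsilon(A,\nu_d)$, the simplicial cycles $z_n$ carried by $f_n$ are $2$--boundaries in $V_R(A,\nu_d)$ via the chains $w_n$ given by the $3$--simplices of $F_n$. The next step is to assemble the $z_n$ into a single Vietoris $2$--cycle on a compactum $Z$ built from the data at infinity, in such a way that the chains $w_n$ transport to $3$--chains in a Vietoris complex of $Z$ at a uniformly bounded scale.

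For $Z$ I would take the topological $2$--sphere obtained by doubling the parameter disk $\Lambda$ along $\partial\Lambda$ (one copy corresponding to the layer at times $t_n$, one copy to the layer at times $t_{n+1}$), equipped with a diameter function $\nu\colon S(Z)\to[0,\infty)$ built from $d_X$--diameters at times $t_n$ after rescaling by a fixed schedule $c_n\to 0$. Condition~(1) of Definition~\ref{d3} ensures $\nu\to 0$ on fine parameter simplices, so that $(z_n)$ satisfies the two clauses of Definition~\ref{v1} and is a bona fide $V$--cycle in $(Z,\nu)$; condition~(2) ensures that $\nu$ is equivalent, in the sense of Lemma~\ref{v4}, to the diameter function of a genuine metric on $S^2$, so by Theorem~\ref{v6} the class of $(z_n)$ in $VH_2(Z,\nu)\cong H_2(S^2)\cong\mathbb Z$ is the fundamental class, hence nonzero. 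On the other hand, the rescaled transport of the chains $w_n$ supplies fillings of $z_n$ in $V_a(Z,\nu)$ for a fixed $a$ and all large $n$, directly contradicting Fact~\ref{v7}. The main technical obstacle is exactly the alignment of the two diameter functions, $\nu_d$ on $A\subseteq X$ and $\nu$ on $Z$, via the rescaling schedule $(c_n)$: condition~(1) must supply the vanishing direction and condition~(2) the separation direction of the equivalence required by Lemma~\ref{v4}. Once this bookkeeping is in place the contradiction between the nontrivial Vietoris class and the AHA--produced boundaries is immediate.
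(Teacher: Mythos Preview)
Your strategy is different from the paper's and has a genuine gap in the transport step. You take a single set $A$ equal to the union of all vertex images over all $n$, so the AHA filling $F_n\colon B_n\to P_R(A)$ may place interior vertices anywhere in $A$. Since your lateral ladder along $\partial\Lambda$ contains points at every integer height, a path in $P_R(A)$ can travel from height $t_n$ to any other height; hence $w_n$ may contain vertices $r_\lambda(s)$ with $s$ far outside $[t_n,t_{n+1}]$. Your compactum $Z=\Lambda\cup_{\partial\Lambda}\Lambda$ has only two hemispheres, meant to receive the layers at $t_n$ and $t_{n+1}$, so there is no coherent target for such vertices and no way to control the $\nu$--diameter of their images. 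The remedy is to apply AHA to each $A_n:=f_n(S_n^{(0)})$ separately (the constant $R$ in Definition~\ref{daha}(2) is uniform over all subsets, so this is legitimate) and to choose $t_{n+1}-t_n>R$; then a transport $\phi_n\colon A_n\to Z$ can be defined, and condition~(2) of Definition~\ref{d3} drives its distortion to~$0$ as $n\to\infty$. Even after this fix you must still verify that $(z_n)$ satisfies clause~(2) of Definition~\ref{v1} and carries the fundamental class of $S^2$; neither is automatic from what you wrote.

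The paper avoids this machinery by using the $1$--dimensional obstruction $H_1(\partial\Lambda)\cong\mathbb Z$ rather than $VH_2(S^2)$. For each $R$ it builds a single sphere $h_R=\partial(\Delta\times[t_0,t_0+4R])$ in $P_{C_0}(X)$, assumes a filling $H\colon B\to P_R(h_R(S^{(0)}))$, and \emph{slices} $B$ at height $t_0+2R$: writing $B_0$ for the union of $3$--simplices of $B$ whose vertices map below that height, one has $\partial B_0=S_0+U$ with $\partial U=\Sigma\times\{t_0+2R\}$. A height estimate forces all vertices of $U$ to land in the thin lateral band $h_R(\Sigma^{(0)}\times[t_0+R,t_0+2R])$, so after a short vertical shift the chain $H_*U$ becomes a $2$--chain in $P_{3R}(p_{t_0+2R}(\partial\Lambda))$ bounding the projected fundamental $1$--cycle of $\partial\Lambda\cong S^1$. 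This contradicts the nontriviality of that cycle, which the paper gets from Fact~\ref{v7} applied directly to $\partial\Lambda$. In effect the paper replaces your global transport--to--$S^2$ by a local connecting--homomorphism argument reducing to $S^1$; this is shorter and entirely sidesteps the difficulty above.
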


Before proving the proposition, we explain how it implies Theorems \ref{d1}.
We start the explanation with
two results which exhibit that the two classes of metric spaces $X$ occurring
in Theorem \ref{d1} admit conical
families of geodesic rays parameterized by the ideal boundaries $\partial X$
(equipped with appropriate metrics).
The first of these results, Lemma \ref{d5}, refers to the well known properties of {\it visual metrics}
on Gromov boundaries of $\delta$--hyperbolic spaces
(see \cite[Chapter III.H, Proposition 3.21, p.\ 435]{BrHa}).
Recall that in a proper geodesic $\delta$--hyperbolic metric space $X$,
for any $x_0\in X$ and any $\xi\in\partial X$ there is a geodesic ray
based at $x_0$ and diverging to $\xi$ (\cite[Chapter III.H, Lemma 3.1, p.\ 427]{BrHa}).

\begin{lem}[Gromov boundary is conical]
\label{d5}
Let $X$ be a proper geodesic $\delta$--hyperbolic metric space,
and let $d_{\partial X}$ be a visual metric on the Gromov boundary $\partial X$,
with respect to a point $x_0\in X$. For each $\xi\in\partial X$ choose any
geodesic ray $r_\xi$ in $X$ based at $x_0$ and diverging to $\xi$. Then the family
$r_\xi:\xi\in\partial X$ is conical (with the parameter space
$\Lambda=(\partial X,d_{\partial X})$) for the constant $C=2\delta$.
\end{lem}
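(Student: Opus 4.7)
The plan is to reduce both conditions to two standard consequences of $\delta$-hyperbolicity, connecting distances between geodesic rays to the Gromov product on the boundary via the visual metric estimate.

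First, recall the two ingredients I would use without reproving. Visual metrics satisfy the comparison
\[
\tfrac{1}{K}\, e^{-a(\xi|\eta)_{x_0}} \;\leqslant\; d_{\partial X}(\xi,\eta) \;\leqslant\; K\, e^{-a(\xi|\eta)_{x_0}}
\]
for some constants $K\geqslant 1$ and $a>0$ depending on the choice of visual parameter (cf.\ \cite[Chapter III.H, Prop.\ 3.21]{BrHa}). Second, the identity $d_X(r_\xi(t),r_\eta(t)) = 2\bigl(t-(r_\xi(t)|r_\eta(t))_{x_0}\bigr)$ holds trivially, because both rays are isometric and based at $x_0$. So controlling $d_X(r_\xi(t),r_\eta(t))$ is the same as controlling the Gromov product of the points $r_\xi(t), r_\eta(t)$ at $x_0$.

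For condition (1), the key fact is the standard thin-triangle lemma: if $\xi,\eta\in \partial X$ and $(\xi|\eta)_{x_0}\geqslant T$, then for every $t\leqslant T-O(\delta)$, $d_X(r_\xi(t),r_\eta(t))\leqslant 2\delta$. (One approximates the rays by geodesic segments $[x_0,x_n], [x_0,y_n]$ with $x_n\to\xi, y_n\to\eta$ and $(x_n|y_n)_{x_0}$ close to $(\xi|\eta)_{x_0}$; the triangle $\Delta(x_0,x_n,y_n)$ is $\delta$-thin, so the two sides emanating from $x_0$ fellow-travel $2\delta$-closely up to length $(x_n|y_n)_{x_0}$, and one passes to the limit.) Given $N>0$, I choose $\varepsilon>0$ so small that $d_{\partial X}(\xi,\eta)\leqslant \varepsilon$ forces $(\xi|\eta)_{x_0}\geqslant N+O(\delta)$ via the visual metric estimate; the conclusion follows with $C=2\delta$.

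For condition (2), I run the previous argument in reverse. Suppose $d_X(r_\xi(t),r_\eta(t))\leqslant L$ and look at the geodesic triangle with vertices $x_0, r_\xi(t), r_\eta(t)$, whose two long sides have length $t$ and whose third side has length $\leqslant L$. By $\delta$-thinness, for every $s\leqslant t-L/2-\delta$ the point $r_\xi(s)$ lies within $\delta$ of the other two sides, but is too far from the short side, so it is $\delta$-close to some $r_\eta(s')$. Comparing distances to $x_0$ gives $|s-s'|\leqslant \delta$, hence $d_X(r_\xi(s),r_\eta(s))\leqslant 2\delta$. Translating through the Gromov-product identity and passing to the boundary yields
\[
(\xi|\eta)_{x_0} \;\geqslant\; t - L/2 - O(\delta).
\]
Given $L\geqslant 2\delta$ and $\varepsilon'>0$, I choose $N'$ large enough that the right-hand side, for $t\geqslant N'$, forces $Ke^{-a(\xi|\eta)_{x_0}}\leqslant \varepsilon'$, which via the visual metric estimate gives $d_{\partial X}(\xi,\eta)\leqslant \varepsilon'$.

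The only substantive obstacle is bookkeeping the $O(\delta)$ additive constants when relating the finite-time Gromov product $(r_\xi(t)|r_\eta(t))_{x_0}$ to the boundary Gromov product $(\xi|\eta)_{x_0}$, but this is completely standard; everything else is essentially the definition of a visual metric together with $\delta$-thin triangles.
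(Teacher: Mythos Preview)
Your proof is correct. The paper does not actually prove this lemma: it simply states it and remarks that it ``refers to the well known properties of visual metrics on Gromov boundaries of $\delta$--hyperbolic spaces'' with a citation to \cite[Chapter III.H, Proposition 3.21]{BrHa}. Your argument is precisely the unpacking of those properties that the paper leaves implicit --- the visual metric comparison $d_{\partial X}(\xi,\eta)\asymp e^{-a(\xi|\eta)_{x_0}}$ together with the thin-triangle fellow-travelling of geodesic rays up to the Gromov product --- so your approach is exactly what the authors have in mind.

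One small remark on the bookkeeping you flag at the end: when you write ``the two sides emanating from $x_0$ fellow-travel $2\delta$-closely up to length $(x_n|y_n)_{x_0}$'', the clean bound actually holds for $s$ up to $(x_n|y_n)_{x_0}-\delta$ (since a point on the third side $[x_n,y_n]$ is always at distance $\geqslant (x_n|y_n)_{x_0}$ from $x_0$, ruling out Case~2 of the Rips condition only for such $s$). Combined with the $2\delta$ slack between $\liminf(x_n|y_n)_{x_0}$ and $(\xi|\eta)_{x_0}$, this gives the honest version of your ``$T-O(\delta)$''. None of this affects the argument, and the constant $C=2\delta$ survives.
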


The next result exhibits the existence of less known natural metrics
(compatible with the cone topology) on boundaries $\partial X$
of complete CAT(0) spaces $X$. Recall that in a complete CAT(0) space
any point $x_0\in X$ can be connected to any $\xi\in\partial X$
with the unique geodesic ray (see \cite[Chapter II, Proposition 8.19]{BrHa}).
Recall also that for distinct geodesic rays $r,r'$ based at a common point of $X$
the function $t\to d_X(r(t),r'(t))$ is convex (see \cite[Chapter II, Proposition 2.2]{BrHa}),
and hence for any real $A>0$ there is exactly one $t$ with $d_X(r(t),r'(t))=A$.

\begin{prop}[Conical metric on CAT(0) boundary]
\label{d6}
Let $X$ be a complete CAT(0) space, $x_0\in X$ a point,
and $A>0$ a real number. For $\xi,\eta\in\partial X, \xi\ne\eta$, let
$r_\xi,r_\eta$ be the unique geodesic rays from $x_0$ to $\xi$ and $\eta$, respectively.
Put $d_A(\xi,\eta):=t^{-1}$, where $t$ is the unique number with $d_X(r_\xi(t),r_\eta(t))=A$.
\begin{enumerate}
\item {\it $d_A$ is a metric on $\partial X$ compatible with the cone topology.}
\item {\it The family $r_\xi:\xi\in\partial X$ of geodesic rays based at $x_0$
is conical (with the parameter space $\Lambda=(\partial X,d_A)$)
for the constant $C=A$.}
\end{enumerate}
\end{prop}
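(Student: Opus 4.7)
The plan is to establish part (1) via standard CAT(0) convexity facts, with the triangle inequality being the main piece of content, and then verify part (2) as a fairly direct consequence of the same monotonicity used throughout part (1). The whole proof hinges on the well-known convexity statement that for two geodesic rays emanating from $x_0$, the function $f(s):=d_X(r_\xi(s),r_\eta(s))$ is convex (cf.\ \cite[Chapter II, Proposition 2.2]{BrHa}) with $f(0)=0$, so that $s\mapsto f(s)/s$ is non-decreasing on $(0,\infty)$.

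For well-definedness of $d_A$, I would first check that for $\xi\ne\eta$ the function $f$ is continuous, strictly increasing and unbounded. The strict monotonicity follows from convexity together with $f(0)=0$ once we know $f$ is not identically zero; unboundedness follows because two geodesic rays from a common point whose distance remains bounded must be asymptotic and hence determine the same point of $\partial X$ --- contradicting $\xi\ne\eta$. Symmetry of $d_A$ is immediate, positivity of $d_A(\xi,\eta)$ when $\xi\ne\eta$ is automatic, and $d_A(\xi,\xi)$ is set to $0$ by convention.

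The triangle inequality is the heart of (1). Given $\xi,\zeta,\eta$, write $t_1=1/d_A(\xi,\zeta)$, $t_2=1/d_A(\zeta,\eta)$ and put $T=t_1t_2/(t_1+t_2)$, so that $1/T=1/t_1+1/t_2$ and $T\le\min(t_1,t_2)$. The monotonicity of $s\mapsto d_X(r_\xi(s),r_\zeta(s))/s$ gives
\[
d_X(r_\xi(T),r_\zeta(T))\le \tfrac{T}{t_1}A,\qquad d_X(r_\zeta(T),r_\eta(T))\le \tfrac{T}{t_2}A,
\]
and the triangle inequality in $X$ then yields $d_X(r_\xi(T),r_\eta(T))\le A$. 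Since $f(s)=d_X(r_\xi(s),r_\eta(s))$ is increasing, the unique $t$ with $f(t)=A$ satisfies $t\ge T$, i.e.\ $d_A(\xi,\eta)=1/t\le 1/T=d_A(\xi,\zeta)+d_A(\zeta,\eta)$. For compatibility with the cone topology I would use the standard fact that sets of the form $V(\xi,t,\varepsilon)=\{\eta:d_X(r_\xi(t),r_\eta(t))<\varepsilon\}$ form a neighbourhood basis of $\xi$ and observe, again by the monotonicity $f(s)/s\nearrow$, the two-way comparison $V(\xi,t,\varepsilon)\subseteq\{d_A(\xi,\cdot)<1/t\}$ (for $\varepsilon\le A$) and $\{d_A(\xi,\cdot)<\varepsilon/(tA)\}\subseteq V(\xi,t,\varepsilon)$.

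For part (2), both conditions of Definition~\ref{d3} with $C=A$ reduce to the same monotonicity principle. If $d_A(\xi,\eta)\le\varepsilon$ then the point $t^*$ with $f(t^*)=A$ satisfies $t^*\ge 1/\varepsilon$, and monotonicity of $f(s)/s$ gives $f(s)\le (s/t^*)A\le A$ for all $s\le t^*$; taking $\varepsilon=1/N$ yields condition (1). Conversely, if $s\ge N'$ and $f(s)\le L$, then either $t^*>s\ge N'$ or $t^*\le s$; in the latter case monotonicity gives $A/t^*\le L/s$, hence $t^*\ge As/L\ge AN'/L$, so $N':=L/(A\varepsilon')$ forces $1/t^*\le\varepsilon'$ in both cases, proving condition (2). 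The only step I expect to require genuine care is the triangle inequality, but the identity $1/T=1/t_1+1/t_2$ dictates the right normalisation and turns it into a one-line CAT(0) computation; everything else is essentially bookkeeping with the monotone function $f(s)/s$.
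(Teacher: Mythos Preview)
Your proposal is correct and follows essentially the same approach as the paper: the triangle inequality is obtained via the harmonic-mean point $T=t_1t_2/(t_1+t_2)$ using convexity of $s\mapsto d_X(r_\xi(s),r_\eta(s))$ (the paper passes through $s=\min(t_1,t_2)$ first and then scales down to $T$, but this is the same computation), compatibility with the cone topology is checked by the same two-way comparison of $d_A$--balls with the basic sets $U(\xi,t,\varepsilon)$, and conicality is verified with the same choices $\varepsilon=1/N$ and $N'=L/(A\varepsilon')$. The only minor difference is that the paper checks the second inclusion for the compatibility at every point $\eta\in U(\xi,u,\varepsilon)$ rather than just at $\xi$, but your version (comparing neighbourhood bases at each $\xi$) is sufficient.
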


\medskip\noindent
{\bf Remarks.}
(1) The question about existence of natural metrics on CAT(0) boundaries was posed
by K.\ Ruane in \cite{Kap}, as Problem 45. The metrics as in the above proposition were then
noticed by the first author (see \cite[Remark 17]{Kap}). Viewed as the family
with any parameters $A>0$ and $x_0\in X$, they provide a tool for doing analysis on CAT(0) boundaries,
since they all are in the same quasi-conformal class. We do not
use this fact in the present paper and thus we skip a rather
straightforward proof of it.

(2) In fact, the family $r_\xi:\xi\in\partial X$ from part (2)
is conical (with respect to the metric $d_A$)
for any constant $C>0$, but we do not need this stronger observation.

\begin{proof} (of Proposition~\ref{d6})
We first show that $d_A$ is a metric. The only thing to be checked is
the triangle inequality. Let $\xi,\eta,\zeta$ be three distinct points of $\partial X$.
Suppose that $d_A(\xi,\eta)=t^{-1}$ and $d_A(\eta,\zeta)=s^{-1}$
and that $s\leqslant t$.
This means that $d_X(r_\xi(t),r_\eta(t))=A=d_X(r_\eta(s),r_\zeta(s))$.
By convexity of the function $u\to d_X(r_\xi(u),r_\eta(u))$ we have
$d_X(r_\xi(s),r_\eta(s))\leqslant {\frac{s}{t}}\cdot A$, and hence
\begin{align*}
d_X(r_\xi(s),r_\zeta(s))\leqslant A+{\frac{s}{t}}\cdot A=\frac{s+t}{t}\cdot A.
\end{align*}
Similarly, by convexity of the function $u\to d_X(r_\xi(u),r_\zeta(u))$, we get
\begin{align*}
d_X\left(r_\xi\left(\frac{st}{s+t}\right),r_\zeta\left(\frac{st}{s+t}
\right)\right)\leqslant A,
\end{align*}
which implies that $d_X(r_\xi(u),r_\zeta(u))=A$ for some $u\geqslant\frac{st}{s+t}$.
Consequently
\begin{align*}
d_A(\xi,\zeta) \leqslant
\frac{s+t}{st}=\frac{1}{s}+\frac{1}{t}=d_A(\xi,\eta)+d_A(\eta, \zeta).
\end{align*}

We now turn to showing that the metric $d_A$ is compatible with the cone topology on $\partial X$.
Recall that the cone topology on $\partial X$ is induced by the basis of open sets of the form
\begin{align*}
U(\xi,u,\epsilon)=\{ \eta\in\partial X \,|\, d_X(r_\xi(u),r_\eta(u))<\epsilon \},
\end{align*}
see \cite[Chapter II, Exercise 8.7]{BrHa}. Observe that any open ball of radius $\rho$ centered at $\eta$
in  $(\partial X,d_A)$ coincides with the base set
$U(\xi,\frac{1}{\rho},A)$.
Conversely, for any set $U(\xi,u,\epsilon)$ and any point $\eta\in U(\xi,u,\epsilon)$
put $\delta:=d_X(r_\xi(u),r_\eta(u))$ and note that $\delta<\epsilon$.
If $A\leqslant\epsilon-\delta$ then the ball in $(\partial X,d_A)$ of radius $1/u$ centered at $\eta$
is clearly contained in $U(\xi,u,\epsilon)$. If $A>\epsilon-\delta$ then one verifies
(again using convexity of an appropriate distance function)
that the ball of radius $\frac{\epsilon-\delta}{Au}$ centered at $\eta$
is contained in $U(\xi,u,\epsilon)$.
Hence the compatibility.

It remains to show that the family of rays $r_\xi:\xi\in\partial X$ is conical
with respect to the metric $d_A$, i.e.\ to verify conditions (1) and (2)
of Definition \ref{d3}. If we choose $C=A$ then, by the definition of $d_A$
and by the monotonicity of functions of the form $u\to d_X(r_\xi(u),r_\eta(u))$,
we can take $\varepsilon=1/N$ in the condition (1) and
$N'=\frac{L}{C}\cdot\frac{1}{\varepsilon'}$
in the condition (2). This finishes the proof.
\end{proof}

\begin{proof}
(of Theorem \ref{d1}; assuming Proposition \ref{d4})
According to Lemma \ref{d5} or Proposition \ref{d6} (depending on the considered case),
there is a conical family of geodesic rays $r_\xi:\xi\in\partial X$ in $X$.
If, contrary to the assertion, $\partial X$ contains a $2$--disk
then, restricting the family of rays to this disk, we get that $X$ is
$D^2$--conical. By Proposition~\ref{d4} the space $X$ is then not AHA,
contradicting the assumption. Hence the theorem.
\end{proof}

\begin{proof}(of Proposition \ref{d4})
We start with two general observations. Let $(\Lambda,\partial\Lambda)$
be a pair of a metric space and its subspace (with restricted metric)
homeomorphic to the pair $(D^2,S^1)$, where $S^1=\partial D^2$.
The following result is classical, and it follows from the Vietoris
approach to homology of metric spaces, as recalled in Subsection~\ref{vietpr},
specifically from Theorem~\ref{v6} and Fact~\ref{v7},
in view of the fact that $H_1(S^1;\mathbb Z)=\mathbb Z\ne 0$.

\medskip\noindent
{\bf Claim 1.} {\it There is $a>0$ such that for any $0<\varepsilon<a$ there is a simplicial
map $f_\epsilon:\Sigma\to P_\varepsilon(\partial\Lambda)$, where $\Sigma$ is a triangulation of $S^1$,
which induces a simplicial cycle homologically nontrivial in $H_1(P_a(\partial\Lambda);\mathbb Z)$.}

\medskip
On the other hand, the following is an elementary observation.

\medskip\noindent
{\bf Claim 2.} {\it For each $\varepsilon>0$, any simplicial map $f:\Sigma\to P_\varepsilon(\partial\Lambda)$,
where $\Sigma$ is any triangulation of $S^1$, extends to a simplicial map
$F:\Delta\to P_\varepsilon(\Lambda)$, where $\Delta$ is a triangulation
of $D^2$ such that $\partial\Delta=\Sigma$.}

\medskip
Now, let $X$ be any $D^2$--conical metric space. We need to show that $X$ is not AHA.
To do this, for some constant $C_0>0$ and for each $R\geqslant C_0$ we construct a simplicial map $h_R:S\to P_{C_0}(X)$,
where $S$ is a triangulation of the $2$--sphere, which has no extension $H:B\to P_R(h_R(S^{(0)}))$,
where $S^{(0)}$ is the $0$--skeleton of $S$ and $B$ is a triangulation of the $3$--ball
such that $\partial B=S$. Existence of such a family of maps clearly contradicts AHA.
The idea of the construction is simple: we obtain such cycles as boundary surfaces of ``truncated cones''
inscribed in a $D^2$--conical family of rays in $X$. Execution of this idea is however
a bit long and technical. We split it into three parts.

\medskip\noindent
{\it Part 1: projections of $\Lambda$ on the spheres in $X$.}

Let $\Lambda$ be a metric space homeomorphic to $D^2$ as in the definition of $D^2$--conicality
(applied to $X$), and denote by $\partial\Lambda$ its subspace corresponding to $S^1$.
For each $t>0$ let $p_t:\Lambda\to X$ be defined by $p_t(\xi)=r_\xi(t)$.
Note that $p_t$ maps elements of $\Lambda$ to the sphere $S_X(x_0,t)$ in $X$,
centered at $x_0$ (the basepoint of all geodesic rays $r_\xi$) and of radius $t$.
We will call these maps \emph{projections} of $\Lambda$ on the corresponding spheres.
In the next claim we list some properties of these projections which
follow directly from the definition.

\medskip\noindent
{\bf Claim 3.}
(1) {\it For any $\xi\in\Lambda$ and any $0\leqslant s<t$ we have
$d_X(p_t(\xi),p_s(\xi))=t-s$}.

(2) {\it For any $\xi,\eta\in\Lambda$  and any $0\leqslant s<t$ we have
$t-s\leqslant d_X(p_t(\xi),p_s(\eta))\leqslant t-s+d_X(p_s(\xi),p_s(\eta))$.}
\medskip

We will frequently use the above two properties of the projections,
sometimes even without explicitly referring to them.

Next claim is our first derived consequence of $D^2$--conicality. In its statement,
$C$ denotes the constant from the definition of $D^2$--conicality, $a$  is a constant
from Claim 1 (for the metric space $\Lambda$ from definition of $D^2$--conicality),
while $\Sigma$
and $\Delta$ run through triangulations of $S^1$ and $D^2$ such that $\partial\Delta=\Sigma$.
Homology appearing in the statement is the simplicial homology with integer coefficients.

\medskip\noindent
{\bf Claim 4.} {\it $\forall{L\geqslant C} \,\, \exists{t_0>0} \,\, \forall{t\geqslant t_0} \,\, \exists{\varepsilon_t>0}
\,\, \forall t_0\leqslant s\leqslant t$
if $f:\Sigma\to P_{\varepsilon_t}(\partial\Lambda)$ induces a homologically nontrivial
cycle in $P_a(\partial\Lambda)$ and if $F:\Delta\to P_{\varepsilon_t}(\Lambda)$
is an extension of $f$, then}
\begin{enumerate}
\item {\it $p_s\circ F:\Delta\to P_C(X)$ is well defined and}
\item {\it $p_s\circ f:\Sigma\to P_C(X)$ induces a homologically nontrivial cycle in
$P_L(p_s(\partial\Lambda))$.}
\end{enumerate}

\medskip\noindent
\emph{Proof of Claim 4:}
First, using condition (2) of Definition \ref{d3}, choose $t_0>0$ such that
$\forall s\geqslant t_0$ if $d_X(p_s(\xi),p_s(\eta))\leqslant L$ then $d_\Lambda(\xi,\eta)\leqslant a$.
Next, using condition (1) of Definition \ref{d3}, choose $\varepsilon_t$ so that if
$d_\Lambda(\xi,\eta)\leqslant\varepsilon_t$ then $d_X(p_s(\xi),p_s(\eta))\leqslant C$ for $s\leqslant t$.
Clearly, $p_s\circ F:\Delta\to P_C(X)$ is then well defined for $s\leqslant t$, which proves assertion (1).

To prove assertion (2), suppose that the cycle $p_s\circ f$ is null-homologous
in $P_L(p_s(\partial\Lambda))$. Let $w$ be a $2$--chain in $P_L(p_s(\partial\Lambda))$
with $\partial w=p_s\circ f$. Lifting the vertices of $w$ consistently to $\partial\Lambda$
(by means of any partial inverse of $p_s$)
we get a $2$--chain $\tilde w$ in $P_a(\partial\Lambda)$ with
$\partial\tilde w=f$. But this contradicts the assumption on $f$,
which finishes the proof of Claim 4.

\medskip\noindent
{\it Part 2: construction of a family of large cycles in $X$.}

Now, using Claim 4, we construct a family of large spherical cycles
$h_R:S\to P_{C_0}(X)$, as mentioned in the paragraph just after Claim 2.
Put $C_0=C+1$, where $C$ is the constant as in Claim 4 (i.e.\ the constant as in the definition
of $D^2$--conicality for $X$).
For any integer $R\geqslant C_0$, put $L=3R$ and let $t_0$ be the constant from Claim 4.
Without loss of generality, we assume that $t_0$ is integer
(this will be useful in the next paragraph).
Put $t=t_0+4R$ and let $\varepsilon_t$ be again as asserted in Claim 4.
Moreover, assume that $\varepsilon_t<a$ (where $a$ is as in Claim 1),
again not losing generality.
Finally, let $f_{\varepsilon_t}:\Sigma\to P_{\varepsilon_t}(\partial\Lambda)$ be a simplicial map
as prescribed by Claim 1, and let $F:\Delta\to P_{\varepsilon_t}(\Lambda)$
be its extension guaranteed by Claim 2.

View the interval $[t_0,t]$ as a $1$--dimensional simplicial complex whose vertices
are all the $4R+1$ integers contained in this interval. Triangulate $\Delta\times[t_0,t]$
in a standard way as the product of simplicial complexes. Then the vertices of $\Delta\times[t_0,t]$
all have the form $(v,k)$, where $v$ is a vertex of $\Delta$ and $k\in[t_0,t]$ is an integer.

Map the vertices of $\Delta\times[t_0,t]$ to $X$ by $(v,k)\to p_k(f_{\varepsilon_t}(v))$
and note that, by the choices of $t_0$ and $\varepsilon_t$ accordingly with
assertion (1) of Claim 4, and by earlier mentioned properties of the maps $p_k$,
adjacent vertices are mapped to points at the distance at most $C+1=C_0$.
Consequently, this map induces the simplicial map $h:\Delta\times[t_0,t]\to P_{C_0}(X)$.
Denote by $S$ the boundary simplicial complex
$\partial(\Delta\times[t_0,t])=\Delta\times\{ t_0,t \}\cup\partial\Delta\times[t_0,t]$,
which is topologically a $2$--sphere. Put $h_R:S\to P_{C_0}(X)$
to be the restriction of $h$ to $S$.

\medskip\noindent
{\it Part 3: large cycles are not contractible along themselves.}

To finish the proof of Proposition \ref{d4}, we need to show that there is no
extension $H:B\to P_R(h_R(S^{(0)}))$, where $S^{(0)}$ is the $0$--skeleton of $S$,
and $B$ is a triangulation of the $3$--ball such that $\partial B=S$.
On the contrary, suppose that such an extension $H$ exists. We will show that
this contradicts assertion (2) of Claim 4. Our argument will be based on
the idea related to the construction of the connecting homomorphism
in the homology exact sequence of a pair.

Denote by $B_0$ the subcomplex of $B$ equal to the union of all $3$--simplices of $B$
whose all vertices are mapped through $H$ to the points of $X$ at distance
at most $t_0+2R$ from the point $x_0$ (the origin of the geodesic rays
in the conical family $\{ r_\lambda \}$).
Consider the boundary $\partial B_0$, which we view as $2$--dimensional simplicial cycle.
Note that $\partial B_0$ contains half of $S$, namely
$S_0=\Delta\times\{ t_0 \}\cup\partial\Delta\times[t_0,t_0+2R]$.
Let $U$ be the remaining part of $\partial B_0$, i.e.\ $\partial B_0=S_0+U$.
Then $U$ is a simplicial $2$--chain and
$\partial U=\partial\Delta\times\{ t_0+2R \}=\Sigma\times\{ t_0+2R \}$.

Now, let $u$ be the chain in the Rips' complex $P_R(h_R(S^{(0)}))$
induced by the simplicial map $H$ restricted to $U$. Clearly, the boundary cycle
$\partial u$ coincides with the cycle induced by $H$ restricted to $\partial U$.
Since $H|_{\partial U}=h_R|_{\Sigma\times\{ t_0+2r \}}$, by the definition of $h_R$
we see that $\partial u$ coincides with the cycle induced by the map
$p_{t_0+2R}\circ f:\Sigma\to P_C(p_{t_0+2R}\circ f(\Sigma^{(0)}))$.

We now make some further observations concerning the chains $U$ and $u$.

\medskip\noindent
{\bf Claim 5.} {\it For any vertex $b$ of $U$ we have
$t_0+R\leqslant d_X(x_0,H(b))\leqslant t_0+2R$.}

\medskip\noindent
\emph{Proof of Claim 5:} Note that each $2$--simplex $\sigma$ of $U$ is a face of
a $3$--simplex of $B$ whose vertex $z$ opposite to $\sigma$ satisfies
$d_X(x_0,H(z))>t_0+2R$. In particular, $b$ is adjacent in $B$ to a vertex $z$
of this form. Since adjacency of $b$ and $z$ in $B$ implies
$d_X(H(b),H(z))\leqslant R$, the claim follows.

\medskip
Recall that $H(U^{(0)})$ is the set of vertices of the chain $u$.
Since $H(U^{(0)})\subseteq H(B^{(0)})\subseteq h_R(S^{(0)})$, it follows from Claim 5,
and from the contemplation of the cylindrical shape of $h_R(S^{(0)})$,
that $H(U^{(0)})\subseteq h_R(\Sigma^{(0)}\times[t_0+R,t_0+2R]^{(0)})$.
This allows to modify the chain $u$ slightly, by shifting its vertices
not contained in $h_R(\Sigma^{(0)}\times\{ t_0+2R \})$ into this set.
More precisely, for any vertex $b\in U^{(0)}$ we have
$H(b)=h_R(v,k)$ for some unique integer $k\in[t_0+R,t_0+2R]$ and some
(not necessarily unique) $v\in\Sigma_0$. Put $H'(b)=h_R(v,t_0+2R)$
if $k<t_0+2R$, and $H'(b)=H(b)$ otherwise.

Observe that, by the definition of $h_R$, we have $d_X(H'(b),H(b))\leqslant R$
for all $b\in U^{(0)}$. Since for adjacent vertices $b,b'\in U^{(0)}$
we have $d_X(H(b),H(b'))\leqslant R$, it follows that $d_X(H'(b),H'(b'))\leqslant 3R=L$
for such vertices. Consequently, $H'$ induces a simplicial map
$U\to P_L(h_R(\Sigma^{(0)}\times\{ t_0+2R \}))$, and this map defines
a simplicial $2$--chain $u'$ in the latter Rips' complex.

By noting that $h_R(\Sigma^{(0)}\times\{ t_0+2R \})\subseteq p_{t_0+2R}(\partial\Lambda)$,
we may view $u'$ as a chain in $P_L(p_{t_0+2R}(\partial\Lambda))$.
Moreover, since $H'$ coincides with $H$ on the boundary $\partial U$,
we have $\partial u'=\partial u=p_{t_0+2R}\circ f$.
This shows that $p_{t_0+2R}\circ f$ is null-homologous in $P_L(p_{t_0+2R}(\partial\Lambda))$,
contradicting assertion (2) of Claim 4. This completes the proof.

\end{proof}

\subsection{Gap functions and gap-conicality.}
\label{gap}

In this subsection we extend the notions and results presented above
to parameter spaces $\Lambda$ equipped with the structure weaker than a metric,
namely a gap function.
We assume the reader is familiar with terminology and results
recalled in Subsection \ref{vietpr}.

A \emph{gap function} on a set $X$ is a function $g: X\times X\to[0,\infty)$
which is symmetric (i.e.\ $g(x,y)=g(y,x)$ for all $x,y\in X$) and such that
$g(x,y)=0$ iff $x=y$. Thus, a gap function satisfies the requirements for a metric,
except the triangle inequality.

Note that any gap function $g$ induces a diameter function $\nu_g$, just as a metric does,
by $\nu_g(\sigma):=\max\{ g(x,y):x,y\in\sigma \}$. If $(X,d)$ is a metric space,
we say that a gap function in $X$ is \emph{compatible} with $d$ if the diameter functions
$\nu_g$ and $\nu_d$ are equivalent.

Let $X$ be a compact metrizable topological space. We say that a gap function
$g$ in $X$ is \emph{compatible with the topology} of $X$ if it is compatible with
some (and hence also every) metric compatible with the topology of $X$.
Compatibility of $g$ with the topology of $X$ obviously can be expressed
without referring to any metric on $X$, as follows.

\begin{fact}
\label{g1}
Let $X$ be a compact metrizable topological space. A gap function $g$ in $X$
is compatible with the topology iff the following two conditions hold:
\begin{enumerate}
\item {\it for each finite open covering $\mathcal U$ of $X$ there is $\varepsilon>0$
such that if $g(x,y)<\varepsilon$ then we have $x,y\in U$ for some $U\in{\mathcal U}$;}
\item {\it $\forall\varepsilon>0$ there is a finite open covering $\mathcal U$ of $X$
such that if $x,y\in U\in{\mathcal U}$ then $g(x,y)<\varepsilon$.}
\end{enumerate}
\end{fact}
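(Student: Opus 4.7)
The plan is to reduce compatibility of $g$ to a condition on pairs of points and then play this pairwise condition against the Lebesgue number lemma. As a first step, I would fix any metric $d$ on $X$ compatible with the topology; by Corollary \ref{v5} the equivalence class of the diameter function $\nu_d$ does not depend on this choice, so the task becomes to show that $\nu_g \sim \nu_d$ if and only if conditions (1) and (2) hold.

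Next, I would observe that $\nu_g \sim \nu_d$ is equivalent to the following pairwise statement: for every $\varepsilon>0$ there exists $\delta>0$ such that for all $x,y\in X$ one has $g(x,y)<\delta \Rightarrow d(x,y)<\varepsilon$ and $d(x,y)<\delta \Rightarrow g(x,y)<\varepsilon$. One direction is immediate by restriction to $1$-simplices $\{x,y\}$; the other uses that $\nu_g(\sigma)$ and $\nu_d(\sigma)$ are by definition realised as maxima of pairwise gaps (respectively distances) over the finite vertex set of $\sigma$, so control on pairs propagates to all simplices.

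For the forward implication, assuming $\nu_g\sim\nu_d$: given a finite open cover $\mathcal U$, let $\lambda$ be its Lebesgue number with respect to $d$. Then any pair at $d$-distance less than $\lambda$ is contained in a common member of $\mathcal U$, and the pairwise reduction supplies $\varepsilon>0$ with $g(x,y)<\varepsilon \Rightarrow d(x,y)<\lambda$, which is (1). For (2), given $\varepsilon>0$, pick $\delta>0$ with $d(x,y)<\delta \Rightarrow g(x,y)<\varepsilon$ and cover the compact $X$ by finitely many open $d$-balls of radius $\delta/2$. For the reverse implication, assuming (1) and (2), I would verify the pairwise condition directly: given $\varepsilon>0$, apply (2) to obtain a cover whose members have pairwise $g$-values bounded by $\varepsilon$, and take a $d$-Lebesgue number $\lambda$ to conclude $d(x,y)<\lambda \Rightarrow g(x,y)<\varepsilon$; conversely, cover $X$ by finitely many $d$-balls of radius $\varepsilon/2$ and use (1) to obtain $\delta>0$ with $g(x,y)<\delta \Rightarrow d(x,y)<\varepsilon$.

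I expect no serious obstacle here: once the pairwise reduction is in place, the entire argument alternates between two standard tools from compact metric topology, namely the Lebesgue number lemma and finite open coverings by $d$-balls of prescribed radius. The only mildly delicate point is the pairwise reduction itself, which rests on the essential, if trivial, observation that simplices in $S(X)$ have finite vertex sets, so their diameter-function values are attained by pairs of vertices.
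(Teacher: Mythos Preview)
Your argument is correct. The paper does not actually supply a proof of Fact~\ref{g1}; it introduces the statement with the remark that compatibility ``obviously can be expressed'' in this form and leaves the verification to the reader. Your write-up is a clean execution of that verification: the pairwise reduction is sound (and correctly justified by the fact that both $\nu_g$ and $\nu_d$ are realised as maxima of pairwise values over a finite vertex set), and the rest is, as you say, a routine back-and-forth between the Lebesgue number lemma and finite covers by small $d$-balls. There is nothing to compare against in the paper and no gap in your proposal.
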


We now extend the definition of conicality, Definition \ref{d3},
to the case of parameter spaces equipped with gap functions.

\begin{de}[Gap conicality]
\label{g2}
Let $r_\lambda \colon \lambda \in \Lambda$ be a family of geodesic
rays in a metric space $(X,d_X)$, based at one point $x_0\in X$, parametrized by
a set $\Lambda$ equipped with a gap function $g$. This family is \emph{conical}
(with respect to $g$)
if for some constant $C>0$ the following two conditions are satisfied:
\begin{enumerate}
\item $\forall N>0\;\exists\varepsilon>0\; \forall t\leqslant N$
if $g(\lambda,\mu)\leqslant \varepsilon$ then $d_X(r_\lambda(t),r_\mu(t))\leqslant C$,
\item $\forall L\geqslant C\;\forall\varepsilon'>0\;\exists N'>0\;\forall
t\geqslant N'$ if
$d_X(r_\lambda(t),r_\mu(t))\leqslant L$ then $g(\lambda,\mu)\leqslant \varepsilon'$.
\end{enumerate}

\noindent
A metric space is \emph{gap--$D^2$--conical} if it admits a family
of geodesic rays parameterized by a space $\Lambda$ homeomorphic
to $D^2$, which is conical with respect to some gap function $g$
in $\Lambda$ compatible with the topology.
\end{de}

Clearly, gap--$D^2$--conicality generalizes metric $D^2$--conicality.
Moreover, the following extension of Proposition \ref{d4} holds.

\begin{prop}
\label{g3}
If a metric space is gap--$D^2$--conical
then it is not AHA.
\end{prop}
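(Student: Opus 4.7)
The plan is to adapt the proof of Proposition~\ref{d4}, observing that its key ingredients involving the parameter space $\Lambda$ depend on the metric $d_\Lambda$ only through the associated diameter function $\nu_{d_\Lambda}$, and can be replaced by the analogous ingredients for the diameter function $\nu_g$ induced by the gap function $g$. Concretely, I would rewrite that proof with every Rips complex $P_\varepsilon(\partial\Lambda)$ and $P_\varepsilon(\Lambda)$ replaced by the Vietoris complex $V_\varepsilon(\partial\Lambda,\nu_g)$, respectively $V_\varepsilon(\Lambda,\nu_g)$, defined in Subsection~\ref{vietpr}, while keeping the Rips complexes in the target space $X$ intact.

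With this substitution, Claim~1 of that proof should now read: there exists $a>0$ such that for any $0<\varepsilon<a$ some simplicial map $f_\varepsilon\colon\Sigma\to V_\varepsilon(\partial\Lambda,\nu_g)$ induces a cycle homologically nontrivial in $V_a(\partial\Lambda,\nu_g)$. This follows because, by Fact~\ref{g1}, the function $\nu_g$ restricted to $\partial\Lambda$ is equivalent to the diameter function of any metric on $\partial\Lambda$ compatible with the topology. Hence by Lemma~\ref{v4}, Corollary~\ref{v5} and Theorem~\ref{v6} we have $VH_1(\partial\Lambda,\nu_g)\cong H_1(S^1;\mathbb Z)\ne 0$, and Fact~\ref{v7} supplies the cycles $f_\varepsilon$.

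Claim~2 (extending any $f\colon\Sigma\to V_\varepsilon(\partial\Lambda,\nu_g)$ to a simplicial map $F\colon\Delta\to V_\varepsilon(\Lambda,\nu_g)$ for an appropriate triangulation $\Delta$ of $D^2$ with $\partial\Delta=\Sigma$) follows by using Fact~\ref{g1}(2) to choose a finite open cover $\mathcal U$ of $\Lambda$ consisting of sets of $g$-diameter less than $\varepsilon$, picking (via a Lebesgue number argument applied to any metric on $\Lambda\cong D^2$ compatible with the topology) a subdivision of $\Delta$ whose each closed simplex lies in some $U\in\mathcal U$ and which restricts to $\Sigma$ on the boundary, and then sending each vertex to the point of $\Lambda$ it corresponds to under the fixed homeomorphism $\Lambda\cong D^2$.

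The remaining ingredients, namely the projections $p_t\colon\Lambda\to X$, Claim~3, Claim~4, and the construction and analysis of the spherical cycles $h_R$ (Parts~2 and~3 of the original proof), translate verbatim: Claim~4 depends on $\Lambda$ only through the two conicality conditions, which take the same form in Definition~\ref{g2} as in Definition~\ref{d3} with $g$ in place of $d_\Lambda$; everything else concerns only $X$ and its Rips complexes. The main obstacle is the precise formulation of Claim~2 with a ``mesh'' measured by a mere gap function: one must combine topological compatibility of $g$, compactness of $\Lambda$, and a Lebesgue-number style refinement to produce triangulations of $D^2$ of arbitrarily small $g$-mesh that are simultaneously compatible with a prescribed boundary triangulation.
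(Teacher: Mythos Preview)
Your proposal is correct and follows exactly the same approach as the paper: the paper packages the gap-function analogues of Claims~1 and~2 as Corollary~\ref{g4} and Fact~\ref{g5} (stated just before Proposition~\ref{g3}), and then simply says that ``the proof goes along the same line of arguments as the proof of Proposition~\ref{d4}.'' You have essentially reconstructed Corollary~\ref{g4} and Fact~\ref{g5} and explained why the remainder transfers verbatim; your sketch of Claim~2 is in fact more detailed than the paper's, which omits the argument entirely (though you should be a bit careful that your construction of $F$ matches the prescribed boundary values $f|_{\Sigma^{(0)}}$, not just the abstract triangulation $\Sigma$).
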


To prove Proposition \ref{g3}, we may use the same arguments as in the proof of Proposition \ref{d4},
provided we establish appropriate analogs of Claims 1 and 2 from that proof.
To do this, note first that Lemma \ref{v4}, Theorem \ref{v6} and Fact \ref{v7} immediately imply
the following.

\begin{cor}
\label{g4}
Let $g$ be a gap function on $S^1$ compatible with the standard topology.
Then $VH_1(S^1,\nu_g)=\mathbb Z\ne0$. In particular, the following variant of Claim 1
(from the proof of Proposition \ref{d4}) holds:

\noindent
$\exists a>0$ $\forall 0<\varepsilon<a$ there exists a simplicial map
$f:\Sigma\to V_{\varepsilon}(S^1,\nu_g)$, where $\Sigma$ is some triangulation of
$S^1$, which induces a simplicial $1$--cycle homologically nontrivial in $V_a(S^1,\nu_g)$.
\end{cor}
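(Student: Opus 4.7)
The plan is to chain the three cited results together in the obvious way. First I fix once and for all any metric $d$ on $S^1$ that induces the standard topology (say the arc-length metric). Compatibility of the gap function $g$ with the topology means, by definition, that $\nu_g$ is equivalent to $\nu_d$ as diameter functions on $S(S^1)$. Hence Lemma \ref{v4} yields $VH_1(S^1,\nu_g)=VH_1(S^1,\nu_d)$. The latter, by the second example preceding Definition \ref{v3}, equals $VH_1(S^1,d)$, and by Theorem \ref{v6} (applied to the compact CW complex $S^1$) this coincides with singular homology $H_1(S^1;\mathbb Z)=\mathbb Z$. This gives the first assertion.

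For the second assertion, I would pick a $V$-cycle $(z_n)$ representing a generator of $VH_1(S^1,\nu_g)$. By the definition of a $V$-cycle there is a sequence $\varepsilon_n\to 0$ with $z_n\in Z_1(V_{\varepsilon_n}(S^1,\nu_g))$ and with $z_n$ homologous to $z_{n+1}$ in $V_{\varepsilon_n}$. Since the class $[(z_n)]$ is nonzero, Fact \ref{v7} supplies a constant $a>0$ and an index $n_0$ such that $z_n$ is homologically nontrivial in $V_a(S^1,\nu_g)$ for every $n\geqslant n_0$. Given any $0<\varepsilon<a$, choose $n\geqslant n_0$ with $\varepsilon_n<\varepsilon$; then $z_n$ is already a nontrivial simplicial $1$-cycle in $V_\varepsilon(S^1,\nu_g)\subseteq V_a(S^1,\nu_g)$.

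The last step, and the main technicality, is to exhibit $z_n$ as the image of a simplicial map from a triangulation $\Sigma$ of $S^1$ rather than just as an abstract cycle. Here I would invoke the standard decomposition of an integer $1$-cycle in a simplicial complex as a finite $\mathbb Z$-combination of closed edge-loops (obtained by repeatedly peeling off simple cycles along a support whose boundary vanishes). If the original $z_n$ is nontrivial in $H_1(V_a(S^1,\nu_g))$, at least one elementary closed-loop summand must already be nontrivial there; that single loop is literally the image of a simplicial map from a cyclic triangulation $\Sigma$ of $S^1$, and its image lies in $V_\varepsilon(S^1,\nu_g)$. Taking $f_\varepsilon\colon\Sigma\to V_\varepsilon(S^1,\nu_g)$ to be this map finishes the proof.

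The only non-bookkeeping point is the decomposition of an integer $1$-cycle into elementary loops together with the fact that the homology class of a nontrivial sum of such loops forces at least one summand to be nontrivial; both are standard in simplicial homology and are the only piece not handed to us directly by Lemma \ref{v4}, Theorem \ref{v6}, and Fact \ref{v7}.
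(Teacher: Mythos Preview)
Your proof is correct and follows the paper's approach, which simply invokes Lemma~\ref{v4}, Theorem~\ref{v6}, and Fact~\ref{v7} without spelling out the details you have supplied. One simplification worth noting: the decomposition step (peeling off edge-loops from an abstract $1$-cycle and selecting a nontrivial summand) can be avoided entirely by choosing the $V$-cycle $(z_n)$ from the start to be the sequence of fundamental cycles of finer and finer triangulations of $S^1$, so that each $z_n$ is already the image of a simplicial map from a triangulated circle; this is presumably what the paper has in mind by ``immediately''.
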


An elementary argument, which we omit, gives also the following analog
of Claim 2 from the proof of Proposition \ref{d4}.

\begin{fact}
\label{g5}
Let $(\Lambda,\partial\Lambda)$ be a pair of a topological
space and its subspace homeomorphic to the pair $(D^2,S^1)$. Let $g$ be a gap
function in $\Lambda$ compatible with the topology. Denote by the same symbol $g$
the restricted gap function in $\partial\Lambda$ (which is clearly compatible
with the restricted topology). Then
for each $\varepsilon>0$, any simplicial map $f\colon \Sigma\to V_\varepsilon(\partial\Lambda,\nu_g)$,
where $\Sigma$ is any triangulation of $S^1$, extends to a simplicial map
$F\colon \Delta\to V_\varepsilon(\Lambda,\nu_g)$, where $\Delta$ is a triangulation
of $D^2$ such that $\partial\Delta=\Sigma$.
\end{fact}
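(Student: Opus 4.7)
My plan is to adapt the elementary proof of Claim~2 in the proof of Proposition~\ref{d4} to the gap-function setting, invoking Fact~\ref{g1} to bridge $g$-closeness and topological closeness. First I fix $\varepsilon>0$ and, by Fact~\ref{g1}(2), extract a finite open cover $\mathcal{V}$ of $\Lambda$ whose elements all have $g$-diameter $\leq\varepsilon$; using local path-connectedness of $\Lambda\cong D^2$, I refine $\mathcal{V}$ so that each $V\in\mathcal{V}$ is path-connected. In parallel I fix a metric $d$ on $\Lambda$ compatible with the topology, so that (by equivalence of $\nu_g$ and $\nu_d$) $d$-small finite subsets of $\Lambda$ are also $g$-small.

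Next I would construct a continuous extension. By path-connectedness of $\Lambda$, for each edge $[v_i,v_{i+1}]$ of $\Sigma$ I choose a continuous path $\gamma_i\colon[0,1]\to\Lambda$ from $f(v_i)$ to $f(v_{i+1})$; concatenation yields a continuous $\tilde f\colon|\Sigma|=\partial D^2\to\Lambda$, which extends to a continuous $\tilde F\colon D^2\to\Lambda$ by contractibility of $\Lambda$.

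Now I build $\Delta$ and $F$ in two stages. Realize $\Sigma$ on $\partial D^2$ in cyclic order. For the outer piece, attach a collar of $2$-simplices: add one interior vertex $w_i$ near each $v_i$, with $2$-simplices $\{v_i,v_{i+1},w_i\}$ and $\{v_{i+1},w_i,w_{i+1}\}$, and set $F(v_i)=F(w_i)=f(v_i)$. Each collar $2$-simplex then has $F$-image equal to the two-element set $\{f(v_i),f(v_{i+1})\}$, whose $\nu_g$ is $\leq\varepsilon$ by the hypothesis on $f$. For the inner disk, bounded by the polygon of the $w_i$'s, I fix a basepoint $c\in\Lambda$, choose paths $\sigma_i$ from $f(v_i)$ to $c$ subdivided so that consecutive subdivision points lie in a common $V\in\mathcal{V}$, and---arranging the $\sigma_i$'s to merge into a shared spine after a fixed depth---realize the combined data as a radial ``fan'' triangulation of the inner disk, with spoke vertices carrying the subdivision-point $F$-values and the innermost vertex carrying $F=c$.

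The main obstacle, not present in the metric case, will be verifying the $\nu_g\leq\varepsilon$ bound for \emph{cross-spoke} $2$-simplices (those of the form $\{u_{i,j},u_{i+1,j},u_{i,j+1}\}$), since $g$-distance $\leq\varepsilon$ between two points does not place them in a common $V\in\mathcal{V}$. I would handle this by arranging the spokes $\sigma_i$ to be small perturbations of one another at shallow depths (so a common cover element suffices at each depth) and to merge into one path at deeper depths (so cross-spoke $F$-values coincide). Verifying simpliciality of $F\colon\Delta\to V_\varepsilon(\Lambda,\nu_g)$ then reduces to a case-by-case check on the four types of $2$-simplex: collar, radial-along-spoke, cross-spoke, and innermost triangles at $c$.
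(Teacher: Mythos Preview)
The paper itself omits the proof, describing it only as ``an elementary argument,'' so there is nothing to compare against directly; but your approach can be assessed on its own.

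Your identification of the cross-spoke obstacle is exactly right, and it is fatal to the plan. The proposed fix---making adjacent spokes $\sigma_i,\sigma_{i+1}$ ``small perturbations of one another at shallow depths''---cannot be carried out: the hypothesis $g(f(v_i),f(v_{i+1}))\le\varepsilon$ gives no control whatsoever over $d(f(v_i),f(v_{i+1}))$ for any metric $d$ compatible with the topology, so there is no sense in which the two spokes start out close. In fact the statement as literally written appears to be false for general compatible gap functions. Take $\Lambda=D^2$ with the Euclidean metric $d$, pick $a,b\in\partial\Lambda$ with $d(a,b)>1$, and set $g(a,b)=\varepsilon/2$ while $g=d$ on every other pair. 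One checks easily that $\nu_g$ and $\nu_d$ are equivalent, so $g$ is compatible with the topology; yet $V_\varepsilon(\Lambda,\nu_g)$ is $P_\varepsilon(\Lambda,d)$ (which is contractible) with the single extra edge $\{a,b\}$ adjoined, and this edge lies in no non-degenerate $2$--simplex. Hence $H_1(V_\varepsilon(\Lambda,\nu_g))\cong\mathbb Z$, and the simplicial loop in $V_\varepsilon(\partial\Lambda,\nu_g)$ that traverses $\{a,b\}$ and returns along $\partial\Lambda$ by short $d$-steps bounds no simplicial disk in $V_\varepsilon(\Lambda,\nu_g)$.

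What the proof of Proposition~\ref{g3} actually requires is weaker: for each sufficiently small $\varepsilon$ one needs \emph{some} $f$ satisfying the conclusion of Corollary~\ref{g4} that also extends over a disk. This is genuinely elementary. Using Fact~\ref{g1}(2), choose a finite open cover of $\Lambda$ whose members have $g$-diameter $<\varepsilon$, and take a triangulation $\Delta$ of $\Lambda\cong D^2$ so fine that each closed simplex lies in a single member of the cover; set $\Sigma=\partial\Delta$ and let $f$ and $F$ be the identity on vertices. Then $f$ carries the fundamental class of $S^1$ (hence is nontrivial in $V_a(\partial\Lambda,\nu_g)$) and $F$ is the desired extension. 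You should reformulate the fact along these lines rather than attempting to fill an arbitrary $f$.
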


\begin{proof}(of Proposition~\ref{g3})
In view of Corollary \ref{g4} and Fact \ref{g5}, the proof goes along the same line of arguments
as the proof of Proposition \ref{d4}.
\end{proof}

\subsection{Gap-conicality in systolic complexes -- proof of Theorem \ref{d2}}
\label{sysgap}

In this section we prove Theorem \ref{d2}, i.e.\ we show that systolic boundary
of a locally finite systolic complex contains no subspace homeomorphic
to the $2$--disk. As we show below, in view of Proposition \ref{g3} and the fact that systolic complexes
are AHA, to prove Theorem \ref{d2} it suffices to show
the following result (Proposition~\ref{s1}).
In the statement of this result we use the notion of
{\it good geodesic rays}, as introduced in \cite[Definition 3.2]{OP}. We recall some basic properties
of good geodesic rays later in this section, and here we only need two things.
First, that good geodesic rays in a systolic complex $X$ are certain geodesic rays
in the $1$--skeleton $X^{(1)}$ of $X$. Second, that the systolic boundary $\partial X$,
as a set, coincides with the set of all good geodesic rays in $X$, based at
some fixed vertex $O$, quotiened by the equivalence relation of lying at finite
Hausdorff distance in $X$ \cite[Definition 3.7]{OP}.

\begin{prop}[Gap on systolic boundaries]
\label{s1}
Let $X$ be a locally finite systolic complex and $\partial X$ its systolic boundary.
Fix a vertex $O$ in $X$, and for each $\xi\in\partial X$ choose a good geodesic ray
$r_\xi$ based at $O$ and representing $\xi$. There is a gap function $g$ in $\partial X$
satisfying the following two conditions:
\begin{enumerate}
\item {\it $g$ is compatible with the topology of $X$;}
\item {\it the family $r_\xi \colon \xi\in\partial X$ of good geodesic rays in $X^{(1)}$
is conical with respect to $g$.}
\end{enumerate}
\end{prop}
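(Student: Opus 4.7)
The plan is to mimic the CAT(0) construction from Proposition~\ref{d6}, replacing strict convexity of the distance along a pair of rays by the (near-)monotonicity and linear divergence that good geodesic rays in a systolic complex should satisfy. The output will be only a gap function rather than a metric, since the absence of genuine convexity blocks the triangle inequality argument that works for $d_A$.

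Concretely, I would fix a threshold $K > 0$ large enough that any two non-equivalent good geodesic rays from $O$ eventually separate beyond distance $K$ in $X^{(1)}$; such $K$ exists because $\partial X$ is the set of good rays modulo bounded Hausdorff distance. Then define
\begin{align*}
T(\xi,\eta) := \sup\{\, t \geq 0 \;:\; d_X(r_\xi(s), r_\eta(s)) \leq K \text{ for every } s \in [0,t] \,\},
\end{align*}
which is finite for $\xi \neq \eta$ and $+\infty$ otherwise, and put $g(\xi,\eta) := 1/T(\xi,\eta)$ with $g(\xi,\xi) := 0$. Symmetry and non-degeneracy are then immediate.

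For compatibility of $g$ with the topology of $\partial X$ I would apply Fact~\ref{g1}: the topology on the systolic boundary introduced in \cite{OP} is generated by neighbourhoods of the form $\{\eta : d_X(r_\xi(s), r_\eta(s)) \le K \text{ for all } s \le N\}$, which agree with the sublevel sets $\{\eta : g(\xi,\eta) \le 1/N\}$; together with compactness of $\partial X$ (also from \cite{OP}), this yields both conditions of Fact~\ref{g1}. For conicality, set $C := K$ in Definition~\ref{g2}. The first conicality inequality is then immediate: if $g(\xi,\eta) \le 1/N$ then $T(\xi,\eta) \ge N$, hence $d_X(r_\xi(t), r_\eta(t)) \le K = C$ for every $t \le N$. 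For the second inequality, I would invoke a linear divergence property of good geodesic rays: there exists $\alpha > 0$ such that whenever $d_X(r_\xi(s_0), r_\eta(s_0)) > K$ one has $d_X(r_\xi(t), r_\eta(t)) \ge \alpha(t-s_0)$ for all $t \ge s_0$. Given $L \ge C$ and $\varepsilon' > 0$, choose $N' := 1/\varepsilon' + L/\alpha$: for $t \ge N'$ with $d_X(r_\xi(t), r_\eta(t)) \le L$, linear divergence forces the first $K$-separation time to satisfy $T(\xi,\eta) \ge t - L/\alpha \ge 1/\varepsilon'$, so $g(\xi,\eta) \le \varepsilon'$ as required.

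The main obstacle is to isolate, or extract from \cite{OP}, the two geometric inputs just used: monotonicity of $s \mapsto d_X(r_\xi(s), r_\eta(s))$ along a pair of good rays (needed for $T$ to really be the first separation time and for the topology match), and linear divergence of distinct good rays once they cross the threshold $K$. Both are manifestations of the combinatorial negative curvature of systolic complexes, and their proof relies on the inductive projection construction of good geodesic rays on the spheres $S_n(O)$; possible oscillation of $d_X(r_\xi(s), r_\eta(s))$ arising from that construction may have to be absorbed into slightly larger choices of $K$ and $C$. Once these ingredients are in place, the rest of the argument is essentially bookkeeping against Definition~\ref{g2} and Fact~\ref{g1}.
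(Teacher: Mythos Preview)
Your approach is essentially the paper's: define $g(\xi,\eta)$ as the reciprocal of the time at which the good rays separate beyond a fixed threshold, then verify Fact~\ref{g1} and Definition~\ref{g2} using the near-convexity of distances along good rays. The paper's gap function is almost identical to your $1/T$ (it quantifies over \emph{all} representatives of $\xi,\eta$ rather than just the chosen ones, which costs an extra additive constant in condition~(1) of conicality, but otherwise the arguments run in parallel).

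There is, however, a genuine gap in your second conicality estimate: the ``linear divergence'' input you postulate is false in the form stated, and the error cannot be absorbed into constants. Systolic complexes may contain Euclidean flats (the equilaterally triangulated plane is systolic); for two rays in a flat that first reach distance $K$ at time $s_0$, one has $d(r_\xi(t),r_\eta(t))=(K/s_0)\,t$, so the increment past $K$ has slope $K/s_0\to0$ as $s_0\to\infty$, and no uniform $\alpha>0$ exists. The correct replacement---and this is precisely what the paper records as Lemma~\ref{s2}(1), taken from \cite{OP}---is the \emph{multiplicative} estimate
\[
d_{X^{(1)}}(r_\xi(s),r_\eta(s))\;\le\;\tfrac{s}{t}\,d_{X^{(1)}}(r_\xi(t),r_\eta(t))+D.
\]
From this, if $d_{X^{(1)}}(r_\xi(t),r_\eta(t))\le L$ then $d_{X^{(1)}}(r_\xi(s),r_\eta(s))\le K$ for all $s\le\frac{(K-D)t}{L}$, hence $T(\xi,\eta)\ge\frac{(K-D)t}{L}$ and condition~(2) holds with $N'=\frac{L}{(K-D)\varepsilon'}$ rather than your $\frac{1}{\varepsilon'}+\frac{L}{\alpha}$. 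The same inequality also substitutes for the monotonicity you hoped for (which fails, but is not needed): it supplies exactly the one-sided control required for $T$ to behave well and for the topology comparison. On that last point, note that the standard neighbourhoods in \cite{OP} test the distance only at a \emph{single} time $N$, and for \emph{some} representative of $\eta$, not for all $s\le N$ and the chosen $r_\eta$; the two neighbourhood systems are cofinal via Lemma~\ref{s2}, but this has to be argued rather than asserted.
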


\begin{proof} (of Theorem \ref{d2} assuming Proposition \ref{s1})
Suppose a contrario that there is $D^2\subset\partial X$. By Proposition \ref{s1}(2),
the subfamily $r_\xi:\xi\in D^2$
is then conical with respect to the gap function $g$ restricted to $D^2$. Since,
by Proposition \ref{s1}(1), this restricted gap function is compatible with the topology
of $D^2$, we conclude that the geodesic space $X^{(1)}$ is gap--$D^2$--conical.
By Proposition~\ref{g3}, $X^{(1)}$ is then not AHA, a contradiction.
\end{proof}

\medskip
In the remaining part of this section we prove Proposition \ref{s1}.
We start with recalling some further properties of good geodesic rays.
Part (1) of the lemma below is a special case of \cite[Corollary 3.4]{OP},
and part (2) coincides with Lemma 3.8 in the same paper.

\begin{lem}
\label{s2}
There is some universal constant $D>0$ satisfying the following properties.
For any systolic complex $X$ and any two good geodesic rays $r_1,r_2$ in $X^{(1)}$
based at the same vertex $O$ we have
\begin{enumerate}
\item $d_{X^{(1)}}(r_1(s),r_2(s))\leqslant \frac{s}{t}\cdot
d_{X^{(1)}}(r_1(t),r_2(t))+D$
for any integer $s,t$ such that $0<s<t$;
\item $r_1,r_2$ represent the same point in $\partial X$ (i.e.\ they lie at
finite Hausdorff distance) iff $d_{X^{(1)}}(r_1(u),r_2(u))\leqslant D$ for all positive
integers $u$.
\end{enumerate}
\end{lem}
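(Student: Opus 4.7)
The plan is to reduce part (2) to part (1), then spend the real work on part (1), which is the combinatorial/geometric heart of the lemma. Part (1) is a systolic analogue of the convexity estimate $d(r_1(s),r_2(s))\le (s/t)\cdot d(r_1(t),r_2(t))$ that holds in CAT$(0)$ spaces: the proof cannot use genuine convexity of the metric, since $X^{(1)}$ is not CAT$(0)$, so one pays an additive "systolic defect" $D$.

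For part (1), my plan is to exploit the two pillars of systolic geometry: combinatorial balls $B_n(O)$ are convex in $X^{(1)}$, and there is a canonical projection $\pi_n\colon X^{(1)}\to B_n(O)$ onto the ball that contracts edge-distances up to a universal additive error. The definition of a \emph{good} geodesic ray is set up so that the vertex $r(n)$ is exactly (or at worst a canonical partner of) $\pi_n(r(n+1))$, which, iterated, gives $\pi_s(r(t))=r(s)$ for $s\le t$. The key step is then to fix a combinatorial geodesic $\gamma_t$ of length $L:=d_{X^{(1)}}(r_1(t),r_2(t))$ inside $B_t(O)$ joining $r_1(t)$ to $r_2(t)$, and to slide it down level by level: at each step one replaces $\gamma_{k+1}$ by $\pi_k(\gamma_{k+1})$, a combinatorial path from $r_1(k)$ to $r_2(k)$ whose length decreases proportionally by roughly a factor $k/(k+1)$ (this is the systolic slim-triangle estimate, essentially coming from the Januszkiewicz--Świątkowski/Elsner convexity of balls applied inside the triangle $O,r_1(t),r_2(t)$). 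Telescoping $t-s$ such steps multiplies the length by $\prod_{k=s}^{t-1}\!(k/(k+1)) = s/t$ and accumulates only an additive systolic defect, giving
\[
d_{X^{(1)}}(r_1(s),r_2(s))\le \mathrm{length}\bigl(\pi_s\circ\cdots\circ\pi_{t-1}(\gamma_t)\bigr)\le \frac{s}{t}L+D.
\]

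For part (2), the "if" direction is immediate: $d_{X^{(1)}}(r_1(u),r_2(u))\le D$ for all integer $u$ together with the fact that good rays are edge-paths forces Hausdorff distance at most $D+1$, so $r_1,r_2$ determine the same boundary point. For the "only if" direction, assume $r_1$ and $r_2$ lie at finite Hausdorff distance $H$. Fix $u$ and any integer $t>u$; choose $t'$ with $d_{X^{(1)}}(r_1(t),r_2(t'))\le H$, and note $|t-t'|\le H$ by the triangle inequality from $O$ (since both rays are unit-speed geodesics from $O$), so $d_{X^{(1)}}(r_1(t),r_2(t))\le 2H$. Plugging this into part (1) with $s=u$ gives $d_{X^{(1)}}(r_1(u),r_2(u))\le (u/t)\cdot 2H + D$, and letting $t\to\infty$ yields $d_{X^{(1)}}(r_1(u),r_2(u))\le D$ as required, with the \emph{same} universal constant $D$.

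The main obstacle is the projection/telescoping step for part (1): one must verify carefully that the specific combinatorial choices encoded in the definition of "good" from \cite{OP} are compatible with iterated projection $\pi_s\circ\cdots\circ\pi_{t-1}$, both in the sense that $r_i(s)$ really is the projected image of $r_i(t)$, and in the sense that the systolic defect incurred at each projection step accumulates only additively (rather than multiplicatively) across the $t-s$ levels. Once this compatibility is pinned down, the slim-triangle/convex-ball arguments standard in systolic geometry deliver the multiplicative factor $s/t$, and part (2) drops out of part (1) as above.
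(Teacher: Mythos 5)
First, a point of reference: the paper does not actually prove this lemma --- part (1) is imported as a special case of \cite[Corollary 3.4]{OP} and part (2) as \cite[Lemma 3.8]{OP} --- so any self-contained argument is automatically a different route. Your reduction of part (2) to part (1) is correct and is a genuine small improvement over citing the two facts independently: finite Hausdorff distance $H$ gives, for each integer $t$, some $t'$ with $d_{X^{(1)}}(r_1(t),r_2(t'))\leqslant H$, hence $|t-t'|\leqslant H$ and $d_{X^{(1)}}(r_1(t),r_2(t))\leqslant 2H$; feeding this into (1) at fixed $u$ and letting $t\to\infty$ gives the bound $D$, and the converse direction is immediate from the definition of $\partial X$ as the quotient by the finite-Hausdorff-distance relation.

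The gap is in part (1), and it is quantitative and fatal as written. Even granting the per-level contraction estimate $L_k\leqslant\frac{k}{k+1}L_{k+1}+\epsilon$ with a universal $\epsilon$ (which you do not establish, and which is not a standard consequence of convexity of balls --- the projection onto $B_k(O)$ is only distance-nonincreasing up to an additive constant, with no reason to contract lengths by the exact factor $k/(k+1)$), unwinding the recursion from level $t$ down to level $s$ gives
\[
L_s\ \leqslant\ \frac{s}{t}\,L_t\ +\ \epsilon\, s\sum_{k=s+1}^{t}\frac{1}{k}\ \approx\ \frac{s}{t}\,L_t+\epsilon\, s\ln(t/s),
\]
so the accumulated error is unbounded in $t$ and no universal additive constant $D$ emerges. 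This is precisely why the proof in \cite{OP} is not a level-by-level telescoping: it obtains the factor $s/t$ globally, via the machinery of Euclidean geodesics and the flat characteristic (minimal) discs spanning the geodesic triangle with vertices $O$, $r_1(t)$, $r_2(t)$, paying the additive defect only once. You correctly flagged the compatibility of good rays with iterated projection as the delicate point, but the missing idea is how to obtain the multiplicative factor $s/t$ without incurring an error at every level; without it the argument fails, and since your part (2) depends on part (1), the proof as a whole is incomplete.
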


\begin{proof}(of Proposition \ref{s1})
The proof is divided into three parts. We start with constructing some
gap function $g$ in $\partial X$. Then we show that $g$ is compatible with the topology
of $\partial X$. Finally, we prove that the family $r_\xi:\xi\in\partial X$
is conical with respect to $g$.

\medskip\noindent
\emph{Description of a gap function $g$ in $\partial X$.}

Fix some even integer $A\geqslant 2D+2$, where $D$ is the constant from Lemma \ref{s2}.
Let $\xi,\eta\in\partial X$, $\xi\ne\eta$, and let $r,r'$ be any good geodesic rays
based at $O$ representing $\xi$ and $\eta$, respectively (we will use the notation
$r\in\xi$, $r'\in\eta$ to express the latter relationship). By Lemma \ref{s2},
the sequence $(d_{X^{(1)}}(r(n),r'(n)))_{n\in N}$ converges to $\infty$ as $n\to\infty$.
Consequently, the following number is well defined (i.e.\ it is finite):
\begin{align*}
N(\xi,\eta):=\min\{ n_0:\forall n\geqslant n_0 \, \forall r\in\xi,r'\in\eta  \,\,
d_{X^{(1)}}(r(n),r'(n))\geqslant A \}
\end{align*}
(where $n_0$ and $n$ run through non-negative integers).

Put $g(\xi,\eta)=1/N(\xi,\eta)$ if $\xi\ne\eta$ and $g(\xi,\eta)=0$ otherwise,
and note that $g$ clearly satisfies the requirements of the definition of a gap function,
namely it is symmetric and positive.

\medskip\noindent
\emph{Compatibility of $g$ with the topology of $\partial X$.}

To prove compatibility of the above defined gap function $g$ with the topology of
$\partial X$ we refer to the description of this topology in \cite{OP}, and use
the characterization of compatibility given in Fact \ref{g1}.

Recall that in \cite{OP} the topology of $\partial X$ is introduced by means
of a base for a neighborhood system (which consists of sets that are
not necessarily open in the resulting topology).
More precisely, for each $\xi\in\partial X$ we have a family ${\mathcal N}_\xi$
of sets containing $\xi$, called {\it standard neighborhoods} of $\xi$,
and the whole system ${\mathcal N}_\xi:\xi\in\partial X$
satisfies some appropriate axioms. Open sets are described as those
$U\subseteq X$ for which $\forall \xi\in U$ $\exists Q\in{\mathcal N}_\xi$
such that $Q\subseteq U$. Moreover, each $Q\in{\mathcal N}_\xi$ contains some
open neighborhood of the point $\xi$.
Finally, standard neighborhoods $Q\in{\mathcal N}_\xi$ have the form
\begin{align*}
Q=Q(r,N,R)=\{ \eta\in\partial X:\hbox{ for some }r'\in\eta\hbox{ it holds }
d_{X^{(1)}}(r(N),r'(N))\leqslant R  \},
\end{align*}
for any good geodesic ray $r\in\xi$, and any positive integers $N,R$ with $R\geqslant D+1$
(see \cite[Definition 4.1]{OP}).

We now verify condition (2) of Fact \ref{g1}. Given any $\varepsilon>0$, consider
$N$ such that $1/N<\varepsilon$. Denote by $S_N$ the set of all vertices $v$ of $X$
such that $v=r(N)$ for some good geodesic ray $r$ based at $O$.
By local finiteness of $X$, the set $S_N$ is finite. Moreover, if
$v=r(N)=r'(N)$ then the standard neighborhoods $Q(r,N,R)$ and $Q(r',N,R)$ coincide.
Thus we put $Q(v,N,R):=Q(r,N,R)$ for any good geodesic ray $r$ based at $O$
and such that $r(N)=v$.

Consider the family ${\mathcal U}=\{ U_v:v\in S_N \}$ consisting of the interiors $U_v$
of the standard neighborhoods $Q(v,N,A/2)$ (note that, by the earlier assumption on $A$,
$A/2$ is an integer and $A/2\geqslant D+1$).
Clearly, $\mathcal U$ is a finite family of open subsets of $\partial X$.
We will show that it is as required in condition (2) of Fact \ref{g1}, i.e.\ that
\begin{enumerate}[(i)]
\item $\mathcal U$ is a covering of $\partial X$, and
\item if $\xi,\eta\in U_v$ then $g(\xi,\eta)<\varepsilon$.
\end{enumerate}

To see (i), note that for each $\xi\in\partial X$ and any $r\in\xi$
we have $\xi\in U_{r(N)}$ (because $Q(r(N),N,A/2)\in{\mathcal N}_\xi$;
compare also \cite[Lemma 4.8]{OP}). To show (ii), note that if $\xi,\eta\in U_v$
then $\xi,\eta\in Q(v,N,A/2)$, and consequently
\begin{align*}
d_{X^{(1)}} (r(N),r'(N)) \leqslant d_{X^{(1)}} (r(N),v)+d_{X^{(1)}} (v,r'(N))\leqslant A/2+A/2=A
\end{align*}
for some $r\in\xi$, $r'\in\eta$. Thus $N(\xi,\eta)>N$, and hence
$g(\xi,\eta)=1/N(\xi,\eta)<1/N<\varepsilon$, as required.

We turn to verify condition (1) of Fact \ref{g1}.
Let $\mathcal U$ be a finite open cover of $\partial X$. In the proof of \cite[Proposition 5.6]{OP} it is shown that there exists a positive integer $N$ such that
the family of standard neighborhoods $Q(v,N,A):v\in S_N$ is a refinement of $\mathcal U$.
(Observe that in the proof of this property in \cite{OP} the assumption of uniform local finiteness
of $X$, occurring in the statement of \cite[Proposition 5.6]{OP},  is not used.)

Put $\varepsilon=\frac{1}{2N}$, and suppose that $g(\xi,\eta)<\varepsilon$ for some distinct
$\xi,\eta\in\partial X$. It means that $d_{X^{(1)}}(r(N'),r'(N'))=A$
for some $r\in\xi$, $r'\in\eta$, and some integer $N'>1/\varepsilon=2N$.
By the fact that $A\geqslant 2D+2$, and due to Lemma \ref{s2}(1), we get
$d_{X^{(1)}}(r(N),r'(N))<\frac{1}{2}A+D<A$.
Consequently, we have $\xi,\eta\in Q(w,N,A)$ for $w=r(N)$.
Since the family $Q(v,N,A):v\in S_N$ is a refinement of $\mathcal U$,
it follows that $\xi,\eta\in U$ for some $U\in{\mathcal U}$, as required.

This completes the proof of compatibility of $g$ with the topology of $\partial X$.

\medskip\noindent
\emph{Conicality of the family $r_\xi:\xi\in\partial X$ with respect to $g$.}

We need to check conditions (1) and (2) of Definition \ref{g2}.

To check (1), fix $C=A+3D$ and take any $N$. We claim that $\varepsilon=1/N$
is as required. Indeed, let $\lambda,\mu\in\partial X$ with $g(\lambda,\mu)<\varepsilon$.
This means that $N(\lambda,\mu)>N$ and that for some good geodesic rays $r\in\lambda$, $r'\in\mu$
we have
\begin{align*}
d_{X^{(1)}} (r(N(\lambda,\mu)-1),r'(N(\lambda,\mu)-1))<A.
\end{align*}
Since for any $t\leqslant N$ we also have $t\leqslant N(\lambda,\mu)-1$,
it follows from Lemma \ref{s2}(1) that $d_{X^{(1)}} (r(t),r'(t))<A+D$ for such $t$.
Then, by Lemma \ref{s2}(2) we get $d_{X^{(1)}} (r_\lambda(t),r_\mu(t))<A+D+2D=C$,
as required.

We now turn to condition (2) of Definition \ref{g2}. Consider any $L\geqslant C$ and any
$\varepsilon'>0$. We claim that any $N'\geqslant\frac{L}{(A-D)\varepsilon'}$ is as required.
To see this, note that for any $t\geqslant N'$ we have $t\geqslant\frac{L}{(A-D)\varepsilon'}$.
If we assume that $d_{X^{(1)}}(r_\lambda(t),r_\mu(t))\leqslant L$ then, by Lemma \ref{s2}(1),
for any $n<\frac{A-D}{L}N'$ we have $d_{X^{(1)}}(r_\lambda(n),r_\mu(n))<A$,
and hence $N(\lambda,\mu)\geqslant\frac{A-D}{L}N'$. Consequently,
\begin{align*}
g(\lambda,\mu)=\frac{1}{N(\lambda,\mu)}\leqslant \frac{L}{A-D}\cdot \frac{1}{N'} \leqslant
\frac{L}{A-D}\cdot \frac{(A-D)\varepsilon'}{L}=\varepsilon',
\end{align*}
which completes the proof.

\end{proof}

%=======================================================================

\begin{bibdiv}

\begin{biblist}

\bib{Al2}{article}{
   author={Alonso, J. M.},
   title={Finiteness conditions on groups and quasi-isometries},
   journal={J. Pure Appl. Algebra},
   volume={95},
   date={1994},
   number={2},
   pages={121--129},
%%   issn={0022-4049},
%%   review={\MR{1293049 (95f:20083)}},
}

\bib{BeMe}{article}{
   author={Bestvina, M.},
   author={Mess, G.},
   title={The boundary of negatively curved groups},
   journal={J. Amer. Math. Soc.},
   volume={4},
   date={1991},
   number={3},
   pages={469--481},
%%   issn={0894-0347},
%%   review={\MR{1096169 (93j:20076)}},
%%   doi={10.2307/2939264},
}

\bib{BrHa}{book}{
   author={Bridson, M. R.},
   author={Haefliger, A.},
   title={Metric spaces of non-positive curvature},
   series={Grundlehren der Mathematischen Wissenschaften [Fundamental
   Principles of Mathematical Sciences]},
   volume={319},
   publisher={Springer-Verlag},
   place={Berlin},
   date={1999},
   pages={xxii+643},
%%   isbn={3-540-64324-9},
%%   review={\MR{1744486 (2000k:53038)}},
}

\bib{Bro}{book}{
   author={Brown, K. S.},
   title={Cohomology of groups},
   series={Graduate Texts in Mathematics},
   volume={87},
   publisher={Springer-Verlag},
   place={New York},
   date={1982},
   pages={x+306},
%%   isbn={0-387-90688-6},
%%   review={\MR{672956 (83k:20002)}},
}

\bib{Br}{article}{
   author={Brown, K. S.},
   title={Finiteness properties of groups},
   booktitle={Proceedings of the Northwestern conference on cohomology of
   groups (Evanston, Ill., 1985)},
   journal={J. Pure Appl. Algebra},
   volume={44},
   date={1987},
   number={1-3},
   pages={45--75},
%%   issn={0022-4049},
%%   review={\MR{885095 (88m:20110)}},
}

\bib{BuLe}{article}{
   author={Buyalo, S. V.},
   author={Lebedeva, N. D.},
   title={Dimensions of locally and asymptotically self-similar spaces},
   language={Russian, with Russian summary},
   journal={Algebra i Analiz},
   volume={19},
   date={2007},
   number={1},
   pages={60--92},
%%   issn={0234-0852},
   translation={
      journal={St. Petersburg Math. J.},
      volume={19},
      date={2008},
      number={1},
      pages={45--65},
%%      issn={1061-0022},
   },
%%   review={\MR{2319510 (2008i:57022)}},
%%   doi={10.1090/S1061-0022-07-00985-5},
}

\bib{Che1}{article}{
   author={Chepoi, V.},
   title={Graphs of some ${\rm CAT}(0)$ complexes},
   journal={Adv. in Appl. Math.},
   volume={24},
   date={2000},
   number={2},
   pages={125--179},
%%   issn={0196-8858},
%%   review={\MR{1748966 (2001a:57004)}},
}

\bib{CO}{article}{
   author={Chepoi, V.},
   author={Osajda, D.},
   title={Dismantlability of weakly systolic complexes and applications},
   journal={Trans. Amer. Math. Soc.},
   volume={367},
   date={2015},
   number={2},
   pages={1247--1272},
%%   issn={0002-9947},
%%   review={\MR{3280043}},
%%   doi={10.1090/S0002-9947-2014-06137-0},
}

\bib{CCH}{article}{
   author={Chiswell, I. M.},
   author={Collins, D. J.},
   author={Huebschmann, J.},
   title={Aspherical group presentations},
   journal={Math. Z.},
   volume={178},
   date={1981},
   number={1},
   pages={1--36},
%%   issn={0025-5874},
%%   review={\MR{627092 (83a:20046)}},
%%   doi={10.1007/BF01218369},
}

\bib{Davis}{book}{
   author={Davis, M. W.},
   title={The geometry and topology of Coxeter groups},
   series={London Mathematical Society Monographs Series},
   volume={32},
   publisher={Princeton University Press},
   place={Princeton, NJ},
   date={2008},
   pages={xvi+584},
   isbn={978-0-691-13138-2},
   isbn={0-691-13138-4},
   review={\MR{2360474 (2008k:20091)}},
}

\bib{Dra}{article}{
    title     ={Open problems in asymptotic dimension theory},
    author    ={Dranishnikov, A.},
    status    ={preprint},
    date      ={2008},
    eprint    ={https://docs.google.com/file/d/0B-tup63120-GUkpiT3Z0VlNtU2c/edit}
}

\bib{FiW}{article}{
   author={Fischer, H.},
   author={Wright, D. G.},
   title={Group-theoretic conditions under which closed aspherical manifolds
   are covered by Euclidean space},
   journal={Fund. Math.},
   volume={179},
   date={2003},
   number={3},
   pages={267--282},
   issn={0016-2736},
   review={\MR{2029325 (2005c:57049)}},
}

\bib{FO}{article}{
   author={Funar, L.},
   author={Otera, D. E.},
   title={A refinement of the simple connectivity at infinity for groups},
   journal={Arch. Math. (Basel)},
   volume={81},
   date={2003},
   number={3},
   pages={360--368},
%%   issn={0003-889X},
%%   review={\MR{2013268 (2004i:20076)}},
}

\bib{Geo}{book}{
   author={Geoghegan, R.},
   title={Topological methods in group theory},
   series={Graduate Texts in Mathematics},
   volume={243},
   publisher={Springer},
   place={New York},
   date={2008},
   pages={xiv+473},
%%   isbn={978-0-387-74611-1},
%%   review={\MR{2365352 (2008j:57002)}},
}

\bib{GeoM1}{article}{
   author={Geoghegan, R.},
   author={Mihalik, M. L.},
   title={Free abelian cohomology of groups and ends of universal covers},
   journal={J. Pure Appl. Algebra},
   volume={36},
   date={1985},
   number={2},
   pages={123--137},
%%   issn={0022-4049},
%%   review={\MR{787167 (86h:20074)}},
}

\bib{GeoM2}{article}{
   author={Geoghegan, R.},
   author={Mihalik, M. L.},
   title={A note on the vanishing of $H\sp n(G,{\bf Z}G)$},
   journal={J. Pure Appl. Algebra},
   volume={39},
   date={1986},
   number={3},
   pages={301--304},
%%   issn={0022-4049},
%%   review={\MR{821894 (87e:20094)}},
}

\bib{Gru}{article}{
   author={Gruber, D.},
   title={Groups with graphical $C(6)$ and $C(7)$ small cancellation
   presentations},
   journal={Trans. Amer. Math. Soc.},
   volume={367},
   date={2015},
   number={3},
   pages={2051--2078},
%%   issn={0002-9947},
%%   review={\MR{3286508}},
%%   doi={10.1090/S0002-9947-2014-06198-9},
}

\bib{Hag}{article}{
    title     ={Complexes simpliciaux hyperboliques
                de grande dimension},
    author    ={Haglund, F.},
    status    ={preprint},
    journal   ={Prepublication Orsay},
    volume    ={71},
    date      ={2003},
    eprint    ={http://www.math.u-psud.fr/~haglund/cpl_hyp_gde_dim.pdf}
}

\bib{Hu}{article}{
   author={Huebschmann, J.},
   title={Aspherical $2$--complexes and an unsettled problem of J. H.
C.
   Whitehead},
   journal={Math. Ann.},
   volume={258},
   date={1981},
   number={1},
   pages={17--37},
%%   issn={0025-5831},
%%   review={\MR{641666 (83e:57004)}},
%%   doi={10.1007/BF01450344},
}

\bib{JS0}{article}{
   author={Januszkiewicz, T.},
   author={{\'S}wi{\c{a}}tkowski, J.},
   title={Hyperbolic Coxeter groups of large dimension},
   journal={Comment. Math. Helv.},
   volume={78},
   date={2003},
   number={3},
   pages={555--583},
%%   issn={0010-2571},
%%   review={\MR{1998394 (2004h:20058)}},
%%   doi={10.1007/s00014-003-0763-z},
}

\bib{JS1}{article}{
   author={Januszkiewicz, T.},
   author={{\'S}wi{\c{a}}tkowski, J.},
   title={Simplicial nonpositive curvature},
   journal={Publ. Math. Inst. Hautes \'Etudes Sci.},
   number={104},
   date={2006},
   pages={1--85},
%%   issn={0073-8301},
%%   review={\MR{2264834 (2007j:53044)}},
%%   doi={10.1007/s10240-006-0038-5},
}

\bib{JS2}{article}{
   author={Januszkiewicz, T.},
   author={{\'S}wi{\c{a}}tkowski, J.},
   title={Filling invariants of systolic complexes and groups},
   journal={Geom. Topol.},
   volume={11},
   date={2007},
   pages={727--758},
%%   issn={1465-3060},
%%   review={\MR{2302501 (2008d:20079)}},
%%   doi={10.2140/gt.2007.11.727},
}

\bib{Kap}{article}{
    title     ={Problems on boundaries of groups and kleinian groups},
    author    ={Kapovich, M.},
    status    ={preprint},
    date      ={2008},
    eprint    ={https://docs.google.com/file/d/0B-tup63120-GOC15RW8zMDFZZjg/edit}
}

\bib{Le}{book}{
   author={Lefschetz, S.},
   title={Algebraic Topology},
   series={American Mathematical Society Colloquium Publications, v. 27},
   publisher={American Mathematical Society},
   place={New York},
   date={1942},
   pages={vi+389},
%%   review={\MR{0007093 (4,84f)}},
}

\bib{LS}{book}{
   author={Lyndon, R. C.},
   author={Schupp, P. E.},
   title={Combinatorial group theory},
   note={Ergebnisse der Mathematik und ihrer Grenzgebiete, Band 89},
   publisher={Springer-Verlag},
   place={Berlin},
   date={1977},
   pages={xiv+339},
%%   isbn={3-540-07642-5},
%%   review={\MR{0577064 (58 \#28182)}},
}

\bib{M2}{article}{
   author={Mihalik, M. L.},
   title={Semistability at infinity, simple connectivity at infinity and
   normal subgroups},
   journal={Topology Appl.},
   volume={72},
   date={1996},
   number={3},
   pages={273--281},
%%   issn={0166-8641},
%%   review={\MR{1406313 (97j:20035)}},
}

\bib{Oll}{book}{
   author={Ollivier, Y.},
   title={A January 2005 invitation to random groups},
   series={Ensaios Matem\'aticos [Mathematical Surveys]},
   volume={10},
   publisher={Sociedade Brasileira de Matem\'atica},
   place={Rio de Janeiro},
   date={2005},
   pages={ii+100},
%%   isbn={85-85818-30-1},
%%   review={\MR{2205306 (2007e:20088)}},
}

\bib{Oll2}{article}{
   author={Ollivier, Y.},
   title={On a small cancellation theorem of Gromov},
   journal={Bull. Belg. Math. Soc. Simon Stevin},
   volume={13},
   date={2006},
   number={1},
   pages={75--89},
%%   issn={1370-1444},
%%   review={\MR{2245980 (2007e:20066)}},
}

\bib{O-ciscg}{article}{
   author={Osajda, D.},
   title={Connectedness at infinity of systolic complexes and groups},
   journal={Groups Geom. Dyn.},
   volume={1},
   date={2007},
   number={2},
   pages={183--203},
%%   issn={1661-7207},
%%   review={\MR{2319456 (2008e:20064)}},
}

\bib{O-ib7scg}{article}{
   author={Osajda, D.},
   title={Ideal boundary of $7$--systolic complexes and groups},
   journal={Algebr. Geom. Topol.},
   volume={8},
   date={2008},
   number={1},
%%   pages={81--99},
%%   review={\MR{2377278 (2009b:20075)}},
}

\bib{O-chcg}{article}{
   author={Osajda, D.},
   title={A construction of hyperbolic Coxeter groups},
   journal={Comment. Math. Helv.},
   volume={88},
   date={2013},
   number={2},
   pages={353--367},
%%   issn={0010-2571},
%%   review={\MR{3048190}},
%%   doi={10.4171/CMH/288},
}

\bib{O-sdn}{article}{
    title     ={A combinatorial non-positive
                curvature I: weak systolicity},
    author    ={Osajda, D.},
    status    ={preprint},
    eprint    ={arXiv:1305.4661},
    date      ={2013}
}

\bib{O-sc}{article}{
    title     ={Small cancellation labellings of some infinite graphs and applications},
    author    ={Osajda, D.},
    status    ={preprint},
    eprint    ={arXiv:1406.5015},
    date      ={2014}
}

\bib{OP}{article}{
   author={Osajda, D.},
   author={Przytycki, P.},
   title={Boundaries of systolic groups},
   journal={Geom. Topol.},
   volume={13},
   date={2009},
   number={5},
   pages={2807--2880},
%%   issn={1465-3060},
%%   review={\MR{2546621}},
%%   doi={10.2140/gt.2009.13.2807},
}

\bib{P}{article}{
   author={Profio, J. S.},
   title={Using subnormality to show the simple connectivity at infinity of
   a finitely presented group},
   journal={Trans. Amer. Math. Soc.},
   volume={320},
   date={1990},
   number={1},
   pages={281--292},
%%   issn={0002-9947},
%%   review={\MR{961627 (90k:20057)}},
}

\bib{Roe}{book}{
   author={Roe, J.},
   title={Lectures on coarse geometry},
   series={University Lecture Series},
   volume={31},
   publisher={American Mathematical Society},
   place={Providence, RI},
   date={2003},
   pages={viii+175},
%%   isbn={0-8218-3332-4},
%%   review={\MR{2007488 (2004g:53050)}},
}

\bib{Sw-propi}{article}{
   author={{\'S}wi{\polhk{a}}tkowski, J.},
   title={Fundamental pro-groups and Gromov boundaries of $7$--systolic groups},
   journal={J. Lond. Math. Soc. (2)},
   volume={80},
   date={2009},
   number={3},
   pages={649--664},
%%   issn={0024-6107},
%%   review={\MR{2559121}},
%%   doi={10.1112/jlms/jdp047},
}

\bib{Z}{article}{
   author={Zubik, J.},
   title={Asymptotic hereditary asphericity of metric spaces of asymptotic
   dimension 1},
   journal={Topology Appl.},
   volume={157},
   date={2010},
   number={18},
   pages={2815--2818},
%%   issn={0166-8641},
%%   review={\MR{2729340 (2012a:57003)}},
%%   doi={10.1016/j.topol.2010.08.019},
}

\end{biblist}
\end{bibdiv}

\end{document}